 \numberwithin{equation}{section}
\newtheorem{thm}{Theorem}[section]
\newtheorem{coro}[thm]{Corollary}
\newtheorem{lem}[thm]{Lemma}
\newtheorem{prop}[thm]{Proposition}
\newtheorem{exam}[thm]{Example}
\theoremstyle{definition}
\newtheorem{defi}[thm]{Definition}
\newtheorem{rema}[thm]{Remark}
\numberwithin{equation}{section}
\begin{document}
\baselineskip=17pt
\title[Separation conditions for IFSs on Riemannian manifolds]
{Separation conditions for iterated function systems with overlaps on Riemannian manifolds}
\date{\today}
\author[S.-M. Ngai]{Sze-Man Ngai}
\address{Key Laboratory of High Performance Computing and Stochastic Information
		Processing (HPCSIP) (Ministry of Education of China), College of Mathematics and Statistics, Hunan Normal University,
	Changsha, Hunan 410081, China, and Department of Mathematical Sciences\\ Georgia Southern
	University\\ Statesboro, GA 30460-8093, USA.}
\email{smngai@georgiasouthern.edu}
\author[Y.Y. Xu]{Yangyang Xu*}
\address{School of Mathematical Sciences, Zhejiang University,
	Hangzhou, Zhejiang 310027, China.} \email{yyxumath@163.com}

\subjclass[2010]{Primary: 28A78. Secondary: 28A80, 58C35, 49Q15.}
\keywords{Fractal, self-conformal measure, Riemannian manifold, weak separation condition, finite type condition.}
\thanks{* Corresponding author.}

\begin{abstract}
We formulate the weak separation condition and the finite type condition for conformal iterated function systems on Riemannian manifolds with nonnegative Ricci curvature, and generalize the main theorems by Lau \textit{et al.} in [Monatsch. Math. 156 (2009), 325-355]. We also obtain a formula for the Hausdorff dimension of a self-similar set defined by an iterated function system satisfying the finite type condition, generalizing a corresponding result by Jin-Yau [Comm. Anal. Geom. 13 (2005), 821--843] and Lau-Ngai [Adv. Math. 208 (2007), 647-671] on Euclidean spaces.
Moreover, we obtain a formula for the Hausdorff dimension of a graph self-similar set generated by a graph-directed iterated function system satisfying the graph finite type condition, extending a result by Ngai \textit{et al.} in [Nonlinearity 23 (2010), 2333--2350].

\end{abstract}

\maketitle

\section{Introduction}\label{S:0}
The weak separation condition (WSC) was introduced by Lau and Ngai \cite{Lau-Ngai_1999} to study the multifractal formalism of self-similar measures defined by iterated function systems of contractive similitudes with overlaps. Although strictly weaker than the well-known open set condition (OSC), the weak separation condition leads to many interesting results, such as the validity of the multifractal formalism (see, e.g. \cite{Lau-Ngai_1999,Feng_2003,Feng_2005,Feng-Lau_2009, Shmerkin_2005,Ye_2005}),  equality of Hausdorff, box, and packing dimensions of self-conformal sets and the computation of these dimensions \cite{Deng-Ngai_2011,Ferrari-Panzone_2011,Lau-Ngai-Wang_2009}, and conditions on absolute continuity of self-similar and self-conformal measures \cite{Lau-Ngai-Rao_2001,Lau-Wang_2004,Lau-Ngai-Wang_2009}. Equivalent forms of the weak separation conditions have also been studied extensive (see, e.g. \cite{Zerner_1996, Lau-Ngai-Wang_2009}).

The finite type condition (FTC) was introduced by Ngai and Wang \cite{Ngai-Wang_2001} to calculate the Hausdorff dimension of self-similar sets with overlaps. It was generalized independently by Jin-Yau \cite{Jin-Yau_2005} and Lau-Ngai \cite{Lau-Ngai_2007} to include (OSC). Lau and Ngai proved that (FTC) implies (WSC) for IFSs of contractive similitudes. This result was generalized by Lau \textit{et al.} \cite{Lau-Ngai-Wang_2009} to conformal iterated function systems (CIFSs).

In 2009, Lau \textit{et al.} \cite{Lau-Ngai-Wang_2009} formulated (WSC) for conformal iterated function systems on $\mathbb{R}^{n}$, and proved the equality of the Hausdorff, box and packing dimensions of the associated self-conformal sets. They also studied the absolute continuity of the associated self-conformal measures. The first goal of this paper is to extend results in \cite{Lau-Ngai-Wang_2009} to Riemannian manifolds with nonnegative Ricci curvature. Our second goal is to generalize the method of computing the Haudorff dimension of self-similar sets in \cite{Ngai-Wang_2001,Jin-Yau_2005,Lau-Ngai_2007} to Riemannian manifolds that are locally Euclidean.

Let $M$ be a complete $n$-dimensional smooth Riemannian manifold.
Assume that $U\subset M$ is open and connected, and $W\subset U$ is a compact set with $\overline{W^{\circ}}=W$, where $\overline{W^{\circ}}$ is the closure of the interior of $W$. Assume that $\{S_{i}\}_{i=1}^{N}$ is a conformal iterated function system (CIFS) on $U$ defined as in Section \ref{S:1}. Then there exists a unique nonempty compact set $K\subset W$, called the \textit{attractor} or \textit{self-conformal set}, satisfying $K=\bigcup_{i=1}^{N}S_{i}(K)$ (see \cite{Hutchinson_1981}).

Given a probability vector $(p_{1},\dots,p_{N})$, i.e., $p_{i}>0$ for any $i\in\{1,\dots,N\}$ and $\sum_{i=1}^{N}p_{i}=1$, there exists a unique Borel probability measure $\mu$, called the \textit{self-conformal measure}, such that $\mu=\sum_{i=1}^{N}p_{i}\mu\circ S_{i}^{-1}$ and $K={\rm supp}(\mu)$ (see \cite{Hutchinson_1981}).

Let $\Sigma:=\{1,\dots,N\}$, where $N\in\mathbb{N}$ and $N\geq2$. Let $\Sigma^{\ast}:=\bigcup_{k\geq1}\Sigma^{k}$, where $k\in\mathbb{N}$. For $\mathbf{u}=(u_1,\dots,u_k)\in\Sigma^{k}$, let $\mathbf{u}^{-}:=(u_{1},\dots,u_{k-1})$ and write $S_{\mathbf{u}}:=S_{u_1}\circ\cdots\circ S_{u_k}$, $p_{\mathbf{u}}:=p_{u_1}\cdots p_{u_k}$. Define
$$r_{\mathbf{u}}:=\inf_{x\in W}|\det S'_{\mathbf{u}}(x)|^{\frac{1}{n}},~r:=\min_{1\leq i\leq N}r_{i},~R_{\mathbf{u}}:=\sup_{x\in W}|\det S'_{\mathbf{u}}(x)|^{\frac{1}{n}},~R:=\max_{1\leq i\leq N}R_{i}.$$
If $S=S_{\mathbf{u}}$ for some $\mathbf{u}\in\Sigma^{\ast}$, we let $R_{S}=R_{\mathbf{u}}$. For $0<b\leq1$, let
$$\mathcal{W}_{b}:=\{\mathbf{u}=(u_1,\dots,u_k):R_{\mathbf{u}}\leq b<R_{\mathbf{u}^{-}}\}\quad\text{and}\quad\mathcal{A}_{b}:=\{S_{\mathbf{u}}:\mathbf{u}\in\mathcal{W}_{b}\}.$$
Denote the cardinality of a set $E$ by $\#E$. We say that $\{S_{i}\}_{i=1}^{N}$ satisfies \textit{the weak separation condition} (WSC) if there exists a constant $\gamma\in\mathbb{N}$ and a subset $D\subset W$, with $D^{\circ}\neq\emptyset$, such that for any $0<b\leq1$ and $x\in W$,
$$\#\{S\in\mathcal{A}_{b}:x\in S(D)\}\leq\gamma.$$
Remark that if the open set condition (OSC) holds, we can take $D$ to be an OSC set and let $\gamma=1$ to show that (WSC) also holds.
For any $a>0$ and any bounded subsets $D\subset W$ and $A\subset M$, denote the diameter of $A$ by $|A|$, and let
$$\mathcal{A}_{a,A,D}:=\{S\in\mathcal{A}_{a|A|}:S(D)\cap A\neq\emptyset\},\quad\gamma_{a,A}:=\sup_{A\subset M}\#\mathcal{A}_{a,A,D}.$$
For $S\in\mathcal{A}_{b}$, let $p_{S}:=\sum\{p_{\mathbf{u}}:S_{\mathbf{u}}=S,\mathbf{u}\in\mathcal{A}_{b}\}$.

Theorems \ref{thm(0.1)}--\ref{thm(0.4)} generalize analogous results in \cite{Lau-Ngai-Wang_2009}. In our proofs, the Lebesgue measure in \cite{Lau-Ngai-Wang_2009} is changed to the more complicated Riemannian volume measure; properties such as the volume doubling property, need not hold. We assume that $M$ is a complete Riemannian  manifold with nonnegative Ricci curvature. Under this assumption, the Bishop-Gromov comparison theorem implies that $M$ is a \textit{doubling space} (see, e.g. \cite{Baudoin_2011,Berger_2003}), i.e., any $2r$-ball in $M$ can be covered by a finite union of a bounded number of $r$-balls,  a property that obviously holds on $\mathbb{R}^{n}$. Another complication arises on manifolds; unlike Euclidean spaces, it is not easy to calculate the volume of a ball in Riemannian manifolds.  For Riemannian manifolds with nonnegative Ricci curvature, we use the Bishop-Gromov inequality  (see Lemma \ref{lem(1.00)}), which says that the Riemannian volume of a ball can be controlled by a ball in $\mathbb{R}^{n}$ with the same radius. This  is crucial in the proof of Theorem \ref{thm(0.1)}.

For a set $K\subset M$, let $\dim_{{\rm H}}(K),\dim_{{\rm B}}(K)$ and $\dim_{{\rm P}}(K)$ be the Hausdorff, box and packing dimensions, respectively. Let $\mathcal{H}^{\alpha}(K)$ and $\mathcal{P}^{\alpha}(K)$ be the Hausdorff and packing measures of $K$, respectively.

\begin{thm}\label{thm(0.1)}
Let $M$ be a complete $n$-dimensional smooth orientable Riemannian manifold with non-negative Ricci curvature, and let $U\subset M$ be open and connect. Assume that $\{S_{i}\}_{i=1}^{N}$ is a CIFS on $U$ satisfying (WSC), and $K$ is the associated attractor. Then $\alpha:=\dim_{{\rm H}}(K)=\dim_{{\rm B}}(K)=\dim_{{\rm P}}(K)$ and
$0<\mathcal{H}^{\alpha}(K)\leq\mathcal{P}^{\alpha}(K)<\infty$.
\end{thm}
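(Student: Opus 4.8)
The plan is to establish the three dimension equalities and the finiteness/positivity of the Hausdorff and packing measures by constructing, for each scale, a well-controlled cover of $K$ whose overlap is bounded by the (WSC) constant $\gamma$, and by producing a comparable packing from below. The key mechanism is to replace the geometric estimates that rely on Lebesgue measure in \cite{Lau-Ngai-Wang_2009} with estimates using the Riemannian volume $\mathrm{Vol}$, controlled by the Bishop--Gromov inequality (Lemma \ref{lem(1.00)}). First I would fix $0<b\le 1$ and use the family $\mathcal{A}_b$ as a natural covering of $K$: since $K=\bigcup_i S_i(K)$ iterates to $K=\bigcup_{S\in\mathcal{A}_b}S(K)$, each image $S(K)$ sits inside a ball of radius comparable to $b$ (up to a bounded conformal distortion constant depending only on $W$ and the CIFS). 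The cardinality $\#\mathcal{A}_b$ need not be uniformly bounded, but (WSC) controls how many of the sets $S(D)$ can overlap at any point, and via the bounded-distortion property each $S(D)$ has volume comparable to $b^n$.

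The second step is the upper bound $\dim_{\mathrm{B}}(K)\le\alpha$ together with $\mathcal{P}^\alpha(K)<\infty$. Here I would count the elements of $\mathcal{A}_b$ intersecting a fixed ball by a volume-packing argument: the sets $\{S(D):S\in\mathcal{A}_b\}$ each contain a ball of radius $\gtrsim b$ (again by bounded distortion, using that $D^\circ\ne\emptyset$), these balls lie in a bounded neighborhood of $K$, and by (WSC) no point lies in more than $\gamma$ of them. Summing volumes and invoking the Bishop--Gromov inequality to bound $\mathrm{Vol}(B(x,b))$ from above by $c\,b^n$ (the Euclidean comparison) yields $\#\mathcal{A}_{a,A,D}\le\gamma_{a,A}<\infty$ uniformly, i.e.\ the local counting is scale-independent. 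This gives the box-counting upper bound and, by a standard mass-distribution/packing argument, finiteness of $\mathcal{P}^\alpha(K)$, where $\alpha$ is characterized as the critical exponent of the natural pressure-type sum $\sum_{S\in\mathcal{A}_b}R_S^{s}$.

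The third step is the lower bound $\mathcal{H}^\alpha(K)>0$, which forces $\dim_{\mathrm{H}}(K)\ge\alpha$ and closes the chain of inequalities $\dim_{\mathrm{H}}\le\dim_{\mathrm{B}}\le\dim_{\mathrm{P}}$ together with the trivial $\dim_{\mathrm{H}}\le\dim_{\mathrm{P}}$. I would construct a self-conformal-type measure or a Frostman measure supported on $K$ by distributing mass according to the weights $R_S^\alpha$ over $\mathcal{A}_b$, then verify the Frostman condition $\mu(B(x,t))\le C\,t^\alpha$ by using (WSC) to bound the number of generating cylinders meeting $B(x,t)$ and bounded distortion to relate their weights to $t^\alpha$. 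The mass-distribution principle then gives $\mathcal{H}^\alpha(K)>0$.

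The main obstacle is that, unlike in $\mathbb{R}^n$, the Riemannian volume is \emph{not} a priori doubling and balls do not have an exactly computable volume, so the clean volume identity $\mathrm{Vol}(B(x,r))=c_n r^n$ is unavailable. This is precisely where the nonnegative Ricci curvature hypothesis enters: the Bishop--Gromov comparison theorem supplies the one-sided bound $\mathrm{Vol}(B(x,r))\le \omega_n r^n$ needed for the volume-packing count in the upper-bound step, and makes $M$ a doubling space so that the covering and overlap arguments go through with scale-independent constants. Managing the interplay between the conformal distortion constants of the CIFS on $W$ and the curvature-dependent geometric constants — ensuring all constants remain uniform across scales $b\to 0$ — will be the most delicate part of the argument, and is the analogue of the remark that the proof of Theorem \ref{thm(0.1)} crucially relies on Lemma \ref{lem(1.00)}.
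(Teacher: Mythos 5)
Your identification of the Riemannian-geometric adaptations is correct and matches the paper: the overlap-counting step you describe is exactly the volume-packing argument of Proposition \ref{prop(1.1)}, where Lemma \ref{lem(1.1)}$(a)$ bounds $\nu(S(D))$ from below by a constant times $b^{n}\nu(D)$ and the Bishop--Gromov inequality (Lemma \ref{lem(1.00)}) bounds $\nu(B_{\eta}(x))$ from above by $c_{n}\eta^{n}$, with the doubling property handling changes of scale. The gap lies in the core of your dimension-and-measure argument. You define $\alpha$ as the critical exponent of $\sum_{S\in\mathcal{A}_{b}}R_{S}^{s}$ and then build a Frostman measure by distributing mass according to the weights $R_{S}^{\alpha}$ over $\mathcal{A}_{b}$. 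For this to produce a consistent measure across scales satisfying $\mu(B(x,t))\leq Ct^{\alpha}$, you need the two-sided counting estimate $\#\mathcal{A}_{b}\asymp b^{-\alpha}$ (equivalently $\sum_{S\in\mathcal{A}_{b}}R_{S}^{\alpha}\asymp 1$ uniformly in $b$); but in the paper this is Corollary \ref{coro(1.1)}, deduced \emph{from} Theorem \ref{thm(0.1)}, so as written your construction is circular. If instead you intend the self-conformal measure with weights $p_{i}\propto R_{i}^{\alpha}$, the Frostman bound fails in general under (WSC): many words $\mathbf{u}\in\mathcal{W}_{b}$ can realize the same map $S$, so one only controls $p_{S}=\sum\{p_{\mathbf{u}}:S_{\mathbf{u}}=S\}$, which can exceed $R_{S}^{\alpha}$ by an unbounded factor --- precisely the phenomenon behind Theorem \ref{thm(0.2)}, where such measures can be singular with respect to $\mathcal{H}^{\alpha}|_{K}$. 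Likewise, ``a standard mass-distribution/packing argument'' does not yield $\mathcal{P}^{\alpha}(K)<\infty$ from an overlap bound alone; one needs a lower density estimate $\mu(B(x,r))\geq cr^{\alpha}$ for a finite measure on $K$ at the exact exponent $\alpha$.

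The paper's (omitted) proof, which follows Theorem 3.2 of Lau--Ngai--Wang, avoids these issues by a different route: the equality $\dim_{\rm H}(K)=\dim_{\rm B}(K)$ and the bound $\mathcal{H}^{\alpha}(K)>0$ come from Falconer's implicit theorems for quasi self-similar sets \cite{Falconer_1989}, using only the bounded distortion estimates \eqref{eq(1.1)}--\eqref{eq(1.2)} (no separation condition and no explicit measure construction), with $\alpha$ simply defined as $\dim_{\rm H}(K)$; the weak separation condition then enters only through Proposition \ref{prop(1.1)} and the Riemannian volume estimates to control the packing side and obtain $\mathcal{P}^{\alpha}(K)<\infty$. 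To repair your plan you would need either to adopt that route, or to establish the counting estimate of Corollary \ref{coro(1.1)} independently (e.g.\ via sub- and supermultiplicativity of $\#\mathcal{A}_{b}$ combined with the (WSC) multiplicity bound) before the Frostman construction can close.
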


Lau \textit{et al.} \cite{Lau-Ngai-Rao_2001} formulated a sufficient condition for a self-similar measure defined by an IFS satisfying (WSC) to be singular. Later,  Lau and Wang \cite{Lau-Wang_2004} established the necessary perfected the result on absolute continuity in \cite{Lau-Ngai-Rao_2001}. We extend these results to manifolds.

\begin{thm}\label{thm(0.2)}
Assume the same hypotheses as in Theorem \ref{thm(0.1)}. Let $K$ be the attractor
with $\dim_{{\rm H}}(K)=\alpha$. Then a self-conformal measure $\mu$ defined by $\{S_{i}\}_{i=1}^{N}$ is singular with respect to $\mathcal{H}^{\alpha}|_{K}$ if and only if there exist $0<b\leq1$ and $S\in\mathcal{A}_{b}$ such that $p_{S}>R_{S}^{\alpha}$.
\end{thm}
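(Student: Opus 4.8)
The plan is to characterize singularity of the self-conformal measure $\mu$ with respect to $\mathcal{H}^\alpha|_K$ by analyzing the local behavior of the Radon-Nikodym derivative through a comparison of the measure $\mu(S(D))$ with the Hausdorff-measure-scale $R_S^\alpha$ of the pieces $S(D)$. I would work at the level of the symbolic coding: for each cutoff $0<b\le 1$, the family $\mathcal{A}_b$ partitions the attractor into pieces $S(K)$ of comparable size $\approx b$, and the (WSC) guarantees these pieces have bounded overlap, so one can build a net of subsets indexed by $\mathcal{A}_b$ as $b\to 0$ and compare the two measures on this net.

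First I would establish the two directions separately. For the ``if'' direction, suppose there exist $b_0$ and $S_0\in\mathcal{A}_{b_0}$ with $p_{S_0}>R_{S_0}^\alpha$. The idea is that iterating $S_0$ produces pieces on which $\mu$ carries mass $p_{S_0}^k$ while their $\mathcal{H}^\alpha$-content is comparable to $R_{S_0}^{\alpha k}$; since $p_{S_0}/R_{S_0}^\alpha>1$, the ratio $p_\mathbf{u}/R_\mathbf{u}^\alpha$ blows up along the relevant subtree of words, forcing the Radon-Nikodym derivative to be infinite on a set of positive $\mu$-measure, which yields singularity. The key tool here is that by Theorem \ref{thm(0.1)} we have $0<\mathcal{H}^\alpha(K)\le\mathcal{P}^\alpha(K)<\infty$, so $\mathcal{H}^\alpha|_K$ is a genuine finite nontrivial reference measure against which to test densities. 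For the ``only if'' direction, I would argue the contrapositive: if $p_S\le R_S^\alpha$ for every $0<b\le 1$ and every $S\in\mathcal{A}_b$, then along every branch the density ratios stay bounded, so a martingale/density argument (using the bounded-overlap net from (WSC)) shows the upper derivative of $\mu$ with respect to $\mathcal{H}^\alpha|_K$ is bounded $\mathcal{H}^\alpha$-a.e., which gives absolute continuity.

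The technical engine in both directions is a Vitali-type covering and density comparison on the manifold. Because $M$ has nonnegative Ricci curvature, it is a doubling space and the Bishop-Gromov inequality (Lemma \ref{lem(1.00)}) lets me compare the Riemannian volume of balls to Euclidean balls of the same radius; this is what allows the pieces $S(D)$ of scale $b$ to be compared uniformly with $\alpha$-dimensional Hausdorff content. Using the bounded-overlap constant $\gamma$ from (WSC) together with the doubling property, I would show that $\mathcal{H}^\alpha(S(K))\asymp R_S^\alpha$ uniformly over $S\in\mathcal{A}_b$, with comparability constants independent of $b$; this is the replacement for the Euclidean volume estimates in \cite{Lau-Ngai-Wang_2009}. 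With this uniform comparability, the densities of $\mu$ and $\mathcal{H}^\alpha|_K$ on the net $\{S(K):S\in\mathcal{A}_b\}$ reduce, up to bounded constants, precisely to the ratios $p_S/R_S^\alpha$, and the dichotomy follows from standard differentiation theory on doubling metric measure spaces.

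I expect the main obstacle to be the uniform two-sided comparison $\mathcal{H}^\alpha(S(K))\asymp R_S^\alpha$ in the absence of the volume doubling property of $\mu$ itself and of clean Euclidean scaling. On $\mathbb{R}^n$ the similarity or conformal pieces scale volume by an exact power of the contraction ratio, but on a manifold the conformal maps distort volumes in a position-dependent way controlled only by the bounds $r_\mathbf{u}\le|\det S'_\mathbf{u}|^{1/n}\le R_\mathbf{u}$, and the Hausdorff measure does not transform exactly under the $S_\mathbf{u}$. Handling the resulting distortion — in particular ensuring the comparability constants do not degenerate as $b\to 0$ — will require careful use of bounded distortion estimates for conformal maps together with Bishop-Gromov, and this is where the manifold setting genuinely complicates the Euclidean argument of Lau-Wang \cite{Lau-Wang_2004} and Lau-Ngai-Rao \cite{Lau-Ngai-Rao_2001}.
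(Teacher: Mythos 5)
Your ``only if'' direction (the contrapositive: $p_S\le R_S^{\alpha}$ for all $S\in\mathcal{A}_b$ and all $b$ implies $\mu\ll\mathcal{H}^{\alpha}|_K$) is essentially sound and matches the intended route: writing $\mu(B_{b\delta}(x))=\sum_{S\in\mathcal{A}_b}p_S\,\mu(S^{-1}(B_{b\delta}(x)))$, bounding the number of contributing $S$ by the WSC constant from Proposition \ref{prop(1.1)}, and bounding $\mathcal{H}^{\alpha}(B_{b\delta}(x)\cap K)$ from below by $\mathcal{H}^{\alpha}(S(K))\gtrsim R_S^{\alpha}\gtrsim b^{\alpha}$ via \eqref{eq(1.2)} and \eqref{eq(1.9)} gives a uniformly bounded upper density, after which the Vitali-relation differentiation theory on the manifold yields absolute continuity. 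Your identification of Bishop--Gromov and bounded distortion as the manifold-specific inputs is also correct.

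The genuine gap is in the ``if'' (singularity) direction. Iterating $S_0$ gives $\mu(S_0^k(K))\ge p_{S_0}^k$ and $\mathcal{H}^{\alpha}(S_0^k(K))\lesssim R_{S_0}^{k\alpha}$, so the ratio blows up --- but only along the nested sequence $S_0^k(K)$, whose intersection is a single point and hence a set of $\mu$-measure zero. This shows at most that any Radon--Nikodym density would have to be unbounded, which does not imply singularity (and deducing singularity from Theorem \ref{thm(0.3)} would be circular, since the paper proves Theorem \ref{thm(0.3)} by appealing to Theorem \ref{thm(0.2)}). What is missing is the mechanism that forces the upper density $\limsup_{\rho\to0}\mu(B_{\rho}(x))/\mathcal{H}^{\alpha}(B_{\rho}(x))$ to be infinite for $\mu$-almost every $x$: in the arguments of \cite{Lau-Ngai-Rao_2001,Lau-Wang_2004} that the paper invokes, this is achieved either by a law-of-large-numbers argument showing that $\mu$-typical codings contain the word generating $S_0$ with positive frequency (so that $p_{\mathbf{u}}/R_{\mathbf{u}}^{\alpha}$ diverges along the cylinders of almost every point, using $R_{\mathbf{u}\mathbf{v}}\le R_{\mathbf{u}}R_{\mathbf{v}}$ and the multiplicativity of $p_{\mathbf{u}}$), or by the counting estimate $\#\mathcal{A}_b\asymp b^{-\alpha}$ of Corollary \ref{coro(1.1)} combined with the mass-concentration Lemma \ref{lem(2.1)}, both of which the paper explicitly lists as ingredients. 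Your proposal never engages with either device, and ``the ratio blows up along the relevant subtree'' does not by itself produce a divergence set of positive, let alone full, $\mu$-measure.
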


\begin{thm}\label{thm(0.3)}
Assume the same hypotheses as in Theorem \ref{thm(0.1)}. If the self-conformal measure $\mu$ is absolutely continuous with respect to $\mathcal{H}^{\alpha}|_{K}$, then the
Radon-Nikodym derivative of $\mu$ is bounded.
\end{thm}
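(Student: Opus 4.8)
The plan is to establish boundedness of the density $f:=d\mu/d\mathcal H^{\alpha}|_{K}$ by first proving a uniform upper regularity estimate $\mu(B(x,r))\le Cr^{\alpha}$ for all $x\in K$ and all small $r>0$, and then comparing it against a lower density bound for $\mathcal H^{\alpha}|_{K}$ via the Lebesgue--Radon--Nikodym differentiation theorem. The first step is to record the consequence of Theorem \ref{thm(0.2)}: since $\mu\ll\mathcal H^{\alpha}|_{K}$ with $\mu(K)=1$ and $\mathcal H^{\alpha}(K)>0$, the measure $\mu$ cannot be singular with respect to $\mathcal H^{\alpha}|_{K}$, so the contrapositive of Theorem \ref{thm(0.2)} gives $p_{S}\le R_{S}^{\alpha}$ for every $0<b\le1$ and every $S\in\mathcal A_{b}$.

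Next I would fix $x\in K$ and small $r>0$, set $b=r$, and iterate the invariance $\mu=\sum_{i}p_{i}\,\mu\circ S_{i}^{-1}$, regrouping by distinct maps to obtain $\mu=\sum_{S\in\mathcal A_{b}}p_{S}\,\mu\circ S^{-1}$ with $p_{S}$ as defined before Theorem \ref{thm(0.1)}. A term $p_{S}\,\mu\!\left(S^{-1}(B(x,r))\right)$ is nonzero only when $S(K)\cap B(x,r)\neq\emptyset$, and is then at most $p_{S}$ since $\mu$ is a probability measure; hence, writing $\mathcal F(x,r):=\{S\in\mathcal A_{b}:S(K)\cap B(x,r)\neq\emptyset\}$ and using $R_{S}\le b=r$,
\[
\mu(B(x,r))\le\sum_{S\in\mathcal F(x,r)}p_{S}\le\sum_{S\in\mathcal F(x,r)}R_{S}^{\alpha}\le r^{\alpha}\,\#\mathcal F(x,r).
\]
It then remains to bound $\#\mathcal F(x,r)$ by a constant independent of $x$ and $r$.

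I expect the bound on $\#\mathcal F(x,r)$ to be the main obstacle, and it is where the manifold geometry enters. By bounded conformal distortion each $S\in\mathcal A_{b}$ has $r_{S}\asymp R_{S}\asymp b$ and $\operatorname{diam}S(W)\lesssim b$, so $S\in\mathcal F(x,r)$ forces $S(D)\subset S(W)\subset B(x,C_{0}r)$ for a uniform $C_{0}$; since $D^{\circ}\neq\emptyset$ the Jacobian bound gives $\operatorname{vol}(S(D))\gtrsim b^{n}$, while (WSC) bounds the overlap of $\{S(D):S\in\mathcal A_{b}\}$ by $\gamma$. Summing volumes and using the Bishop--Gromov inequality (Lemma \ref{lem(1.00)}) to control $\operatorname{vol}(B(x,C_{0}r))\lesssim r^{n}$ yields
\[
(\#\mathcal F(x,r))\,c\,b^{n}\le\sum_{S\in\mathcal F(x,r)}\operatorname{vol}(S(D))\le\gamma\,\operatorname{vol}(B(x,C_{0}r))\le C\,r^{n},
\]
so $\#\mathcal F(x,r)\le C'$ uniformly. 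This is exactly where nonnegative Ricci curvature and the resulting doubling property replace the elementary Euclidean volume count of \cite{Lau-Ngai-Wang_2009}, and it reuses the mechanism already developed for Theorem \ref{thm(0.1)}. Combining the two displays gives $\mu(B(x,r))\le C'r^{\alpha}$ for all $x\in K$ and all small $r$.

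Finally I would convert this into boundedness of $f$. Since $\mu\ll\mathcal H^{\alpha}|_{K}$ and both are finite Radon measures on the doubling manifold $M$, the differentiation theorem gives $f(x)=\lim_{r\to0}\mu(B(x,r))/\mathcal H^{\alpha}(B(x,r)\cap K)$ for $\mathcal H^{\alpha}$-a.e.\ $x\in K$; because $\mathcal H^{\alpha}(K)<\infty$, the Hausdorff upper density theorem (transported from the Euclidean case through the local bi-Lipschitz structure of $M$) supplies $c>0$ with $\limsup_{r\to0}\mathcal H^{\alpha}(B(x,r)\cap K)/r^{\alpha}\ge c$ for $\mathcal H^{\alpha}$-a.e.\ $x$. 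Evaluating the full limit $f(x)$ along a sequence $r_{j}\to0$ realizing this $\limsup$ and invoking $\mu(B(x,r_{j}))\le C'r_{j}^{\alpha}$ gives $f(x)\le C'/c$ for $\mathcal H^{\alpha}$-a.e.\ $x$, i.e.\ $\|f\|_{\infty}\le C'/c<\infty$. The remaining technical care lies in this covering step: one must ensure the Lebesgue--Radon--Nikodym differentiation theorem is available for the possibly non-doubling measure $\mathcal H^{\alpha}|_{K}$, which I would justify by the Besicovitch-type covering theorem valid on $M$ by its local Euclidean geometry together with the Bishop--Gromov doubling property.
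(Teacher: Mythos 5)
Your proof is correct, but it runs in the opposite direction from the paper's. The paper argues by contradiction: assuming $f$ unbounded, it picks a density point of $A(Q)=\{f>Q\}$ to produce a ball $B_{b\delta}(x)$ on which $\mu$ is large relative to $\phi=\mathcal{H}^{\alpha}|_{K}$, lower-bounds $\phi(B_{b\delta}(x))\gtrsim b^{\alpha}$ by locating a whole piece $S(K)$, $S\in\mathcal{A}_{b}$, inside the ball, and upper-bounds $\mu(B_{b\delta}(x))$ by $p_{S}\gamma_{\frac{1}{2\delta},K}$ to extract a single map with $p_{S}>R_{S}^{\alpha}$; Theorem \ref{thm(0.2)} then forces singularity, a contradiction. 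You instead use the \emph{converse} direction of Theorem \ref{thm(0.2)}: absolute continuity rules out singularity, hence $p_{S}\le R_{S}^{\alpha}$ for all $S\in\mathcal{A}_{b}$, which, combined with the same WSC/Bishop--Gromov counting bound on $\#\{S\in\mathcal{A}_{r}:S(K)\cap B(x,r)\neq\emptyset\}$ (the paper packages this as $\gamma_{\frac{1}{2\delta},K}<\infty$ via Proposition \ref{prop(1.1)}), yields the Frostman-type estimate $\mu(B(x,r))\le C'r^{\alpha}$; you then divide by the a.e.\ lower bound on the upper $\alpha$-density of $K$ (valid in any metric space when $\mathcal{H}^{\alpha}(K)<\infty$) and apply the differentiation theorem. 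Both routes rest on the same three ingredients --- Theorem \ref{thm(0.2)}, the WSC volume count, and the Vitali/Besicovitch differentiation of $\mu$ against $\phi$ --- and both inputs are available in the paper. What your version buys is a direct argument with an explicit bound $\|f\|_{\infty}\le C'/c$ and, as a byproduct, upper $\alpha$-regularity of $\mu$; what the paper's version buys is that it never needs the generic a.e.\ density theorem for $\mathcal{H}^{\alpha}|_{K}$, since the self-conformal structure gives the lower bound $\phi(B_{b\delta}(x))\gtrsim b^{\alpha}$ at \emph{every} point of $K$ with explicit constants. One small remark: your justification of the differentiation step via ``local bi-Lipschitz structure'' is serviceable but could simply cite the Vitali-relation machinery of Federer (Definition 2.8.9 and Theorem 2.8.18) that the paper already sets up in Section \ref{S:2}; likewise the density lower bound $\limsup_{r\to0}\mathcal{H}^{\alpha}(K\cap B(x,r))/r^{\alpha}\ge 2^{-\alpha}\cdot 2^{\alpha}$ needs no transport from the Euclidean case, as its standard proof is purely metric.
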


In the proof of Theorem \ref{thm(0.3)}, we use an analogue of the Lebesgue density theorem in metric spaces (see, e.g. \cite[Theorem 2.9.8]{Federer_1969} and \cite[Lemma 2.1(i)]{Bedferd-Fisher_1992}), applying to the collection of Borel sets that forms a Vitali relation (see \cite{Federer_1969} and a brief summary in Section \ref{S:2}). By \cite[Definition 2.8.9 and Theorem 2.8.18]{Federer_1969}, we have a collection of open balls in Riemannian manifolds that forms a Vitali relation.

 We refer the reader to Section \ref{S:3} for the definition of (FTC).

\begin{thm}\label{thm(0.4)}
Assume the same hypotheses as in Theorem \ref{thm(0.1)}, and let $W\subset U$ be a compact set with $\overline{W^{\circ}}=W$. If $\{S_{i}\}_{i=1}^{N}$ is a CIFS on $U$ satisfying (FTC) on some open sets $\Omega\subset W$,
then $\{S_{i}\}_{i=1}^{N}$ satisfies (WSC).
\end{thm}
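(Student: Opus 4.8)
The plan is to prove that (FTC) implies (WSC) by showing that the weak separation condition's defining bound—finitely many maps in $\mathcal{A}_b$ sending the set $D$ over any fixed point—follows from the finiteness of equivalence classes that the finite type condition guarantees. I would take the set $D$ appearing in the definition of (WSC) to be (the closure of) the open set $\Omega\subset W$ on which (FTC) holds, so that $D^{\circ}\neq\emptyset$ is immediate. The core idea is that (FTC) organizes the maps $S_{\mathbf{u}}$ into finitely many "types" up to a controlled comparison, and maps of the same type that send $\Omega$ to sets overlapping a common point differ only by a bounded transition, so only boundedly many genuinely distinct maps can cover a given $x\in W$.

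The key steps, in order, are as follows. First I would recall the definition of (FTC) from Section \ref{S:3}: one partitions the words into generations (for instance via the sets $\mathcal{W}_b$ as $b$ ranges over the natural scales $R^k$), and declares two words equivalent when the associated maps are related by a uniformly bi-Lipschitz (indeed conformal) change of coordinates with bounded distortion, the finite type condition asserting there are only finitely many equivalence classes. Second, fix $0<b\leq1$ and $x\in W$, and consider the family $\{S\in\mathcal{A}_b : x\in S(\Omega)\}$ whose cardinality we must bound by a universal $\gamma$. Each such $S=S_{\mathbf{u}}$ with $\mathbf{u}\in\mathcal{W}_b$ has comparable contraction ratio, $R_{\mathbf{u}}\leq b<R_{\mathbf{u}^{-}}$, so by the bounded-distortion estimates built into the conformality on a manifold of nonnegative Ricci curvature the images $S(\Omega)$ all have diameter comparable to $b$ and all contain $x$; hence they sit inside a single ball $B(x,Cb)$. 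Third, I would invoke the volume/packing control: using the Bishop--Gromov inequality (Lemma \ref{lem(1.00)}) and the doubling property of $M$, the Riemannian volumes of the sets $S(\Omega)$ are each bounded below by a fixed multiple of $b^{n}$, while they all lie in $B(x,Cb)$ whose volume is bounded above by a fixed multiple of $b^{n}$. Finally, the finite type condition limits the multiplicity of overlaps: images that are literally equal collapse (the count in (WSC) is of distinct maps $S$), and among distinct maps the finite number of types together with the volume bound forces the overlap multiplicity to be bounded by a constant $\gamma$ depending only on the system.

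I would carry out the volume estimate carefully because this is where the Riemannian setting genuinely differs from the Euclidean argument of Lau--Ngai--Wang: on $\mathbb{R}^{n}$ one uses that balls have volume exactly proportional to $r^{n}$, whereas here one only has the two-sided control furnished by nonnegative Ricci curvature. Concretely, the lower bound $\mathrm{vol}(S(\Omega))\geq c\,b^{n}$ uses that $S$ is conformal with $|\det S'|^{1/n}$ comparable to $b$ on $W$ together with the change-of-variables formula for the Riemannian volume; the upper bound $\mathrm{vol}(B(x,Cb))\leq C' b^{n}$ is exactly the content of Bishop--Gromov. The main obstacle, and the step I expect to demand the most care, is establishing the uniform bounded-distortion estimates for the conformal maps $S_{\mathbf{u}}$ on a curved manifold—controlling $R_{\mathbf{u}}/r_{\mathbf{u}}$ uniformly over all words and transferring the finite type comparison of maps into a genuine bound on the number of \emph{distinct} elements of $\mathcal{A}_b$ meeting a ball—since without the flat structure one cannot directly compose linear parts, and the compatibility between the Riemannian distance and the coordinate expressions of the $S_i$ must be handled through the doubling property and the local Euclidean charts referenced in the hypotheses.
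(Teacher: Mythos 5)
There is a genuine gap at the heart of your argument: the volume-packing step is circular. You bound $\nu(S(\Omega))\geq c\,b^{n}$ from below and $\nu(B(x,Cb))\leq C'b^{n}$ from above (both of which are fine, via Lemma \ref{lem(1.1)}(a) and Lemma \ref{lem(1.00)}), but to conclude $\#\{S\in\mathcal{A}_{b}:x\in S(\Omega)\}\leq\gamma$ from these you would need
$\sum_{S}\nu(S(\Omega))\leq \gamma'\,\nu(B(x,Cb))$, i.e.\ a \emph{bounded overlap} statement for the sets $S(\Omega)$, $S\in\mathcal{A}_{b}$. That bounded overlap is precisely the content of (WSC) — it is what is being proved, not something you may assume. (Compare the proof of Proposition \ref{prop(1.1)}, $(a)\Rightarrow(b)$, where the inequality \eqref{eq(1.4)} is obtained \emph{from} the WSC hypothesis.) Your closing sentence, that ``the finite number of types together with the volume bound forces the overlap multiplicity to be bounded,'' is exactly the missing step, and it is not supplied by anything earlier in your outline: the finitely many neighborhood types of (FTC) classify vertices living at fixed levels $\mathcal{M}_{k}$ of the nested index sets, whereas the family $\mathcal{A}_{b}$ cuts across levels, so finiteness of types does not directly bound the multiplicity of the sets $S(\Omega)$ for $S\in\mathcal{A}_{b}$.

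The paper's proof closes this gap with a two-step decomposition that your proposal never engages with. Each $\mathbf{u}_{j}$ with $S_{\mathbf{u}_{j}}\in\mathcal{A}_{b}$ and $x\in S_{\mathbf{u}_{j}}(\Omega)$ is written as $\mathbf{u}_{j}=\mathbf{u}'_{j}\mathbf{v}_{j}$ with $\mathbf{u}'_{j}$ its initial segment in a \emph{common} index set $\mathcal{M}_{k}$. Since all the cells $S_{\mathbf{u}_{j}}(\Omega)$ contain $x$, the vertices $(S_{\mathbf{u}'_{j}},k)$ are mutual neighbors, and (FTC) bounds the cardinality of any neighborhood by a constant $c_{2}$ (this is \eqref{eq(3.2)}, and is where finiteness of types is actually used). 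The volume estimates then enter not as a packing argument in a ball, but to show via Lemmas \ref{lem(3.1)}, \ref{lem(1.1)}(b), \ref{lem(1.3)}(a) and \ref{lem(3.2)} that the tails satisfy $|\mathbf{v}_{j}|\leq c_{4}$ uniformly — if some $\mathbf{v}_{j}$ were long, $\nu(S_{\mathbf{u}_{j}}(\Omega))$ would be too small relative to $\nu(S_{\mathbf{u}'_{j}}(\Omega))$, contradicting the comparability of volumes of neighboring cells. The count is then $c_{2}N^{c_{4}}$. To repair your proof you would need to introduce the ancestor/tail decomposition and the bounded-neighborhood consequence of Definition \ref{defi(3.2)}; the Bishop--Gromov estimate you emphasize, while correctly identified as the Riemannian substitute for Euclidean ball volumes, is not the step that makes the argument work here.
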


Denote the Riemannian distance in $M$ by $d(\cdot,\cdot)$. Let $W\subset M$ be a compact set.
We say that $\{S_{i}\}_{i=1}^{N}$ is an \textit{IFS of contractions} on $W$ if for any $i\in\{1,\dots,N\}$, there exists $\rho_{i}\in(0,1)$ such that for any $x,y\in W$,
\begin{equation}\label{eq:IFS_contraction}
d(S_{i}(x),S_{i}(y))\le \rho_{i}d(x,y).
\end{equation}
If equality in \eqref{eq:IFS_contraction} holds for all $i\in\{1,\dots,N\}$ and all $x,y\in W$, then
say that $\{S_{i}\}_{i=1}^{N}$ is an \textit{IFS of contractive similitudes} on $W$ and call $\rho_i$ the \textit{contraction ratio} of $S_{i}$.

We say that a Riemannian manifold $M$ is \textit{locally Euclidean} if every point of $M$ has a neighborhood which is isometric to an open subset of a Euclidean space (see e.g. \cite{Kobayashi-Nomizu}). By \cite[Lemma 2 of Theorem 3.6]{Kobayashi-Nomizu}, contractive similitudes only exist in Riemannian manifolds that are locally Euclidean. In Section \ref{S:4}, we obtain the following formula for computing the Hausdorff dimension formula of self-similar sets defined by a finite type IFS of contractive similitudes on a locally Euclidean Riemannian manifold, extending a result in \cite{Jin-Yau_2005} and \cite{Lau-Ngai_2007} to locally Euclidean Riemannian manifolds (see Theorem \ref{thm(0.5)}).

\begin{thm}\label{thm(0.5)}
Let $M$ be a complete $n$-dimensional smooth orientable Riemannian manifold that is locally Euclidean, $W\subseteq M$ be a compact subset, and $\{S_{i}\}_{i=1}^{N}$ be an IFS of contractive similitudes on $W$ with attractor $K$. Let $\lambda_{\alpha}$ be the spectral radius of the associated weighted incidence matrix $A_{\alpha}$. If $\{S_{i}\}_{i=1}^{N}$ satisfies (FTC), then $\dim_{{\rm H}}(K)=\alpha$, where $\alpha$ is the unique number such that $\lambda_{\alpha}=1$.
\end{thm}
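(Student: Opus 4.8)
The plan is to show that the Hausdorff dimension $\alpha_{0}:=\dim_{\rm H}(K)$ coincides with the unique value $\alpha^{*}$ determined by $\lambda_{\alpha^{*}}=1$, by establishing $\alpha_{0}\le\alpha^{*}$ and $\alpha_{0}\ge\alpha^{*}$ separately. The essential new ingredient over the Euclidean arguments of \cite{Jin-Yau_2005,Lau-Ngai_2007} is the local Euclidean reduction forced by the hypothesis: since (by \cite{Kobayashi-Nomizu}) contractive similitudes exist only on locally Euclidean manifolds, every point of the compact set $K$ has a neighborhood isometric to an open subset of $\mathbb{R}^{n}$, and by compactness finitely many such charts cover $K$. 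Because each ratio $\rho_{i}\in(0,1)$ and $R_{\mathbf{u}}=\rho_{\mathbf{u}}$ for similitudes, for all sufficiently long words $\mathbf{u}$ the cylinder $S_{\mathbf{u}}(W)$ has small diameter and hence lies in a single chart; through the associated isometry $S_{\mathbf{u}}$ becomes a genuine Euclidean similitude. Since isometries preserve distances, the contraction ratios, the overlap combinatorics, and therefore the whole finite type structure --- including the weighted incidence matrix $A_{\alpha}$ --- agree with their Euclidean counterparts. Finally, $\alpha^{*}$ is well defined: each entry of $A_{\alpha}$ is a finite sum of powers $\rho^{\alpha}$ with $0<\rho<1$, so by Perron--Frobenius $\lambda_{\alpha}$ is continuous and strictly decreasing in $\alpha$, yielding a unique root of $\lambda_{\alpha}=1$.

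For the upper bound $\alpha_{0}\le\alpha^{*}$, I would use (FTC) to write, for each generation $k$ of the finite type partition, the vector of weighted cylinder sums (indexed by neighborhood type) as $A_{\alpha^{*}}^{k}$ applied to a fixed initial vector. As the spectral radius equals $1$, these sums stay uniformly bounded, so $\sum_{\mathbf{u}}R_{\mathbf{u}}^{\alpha^{*}}$ over one generation is bounded independently of $k$. Since $|S_{\mathbf{u}}(W)|=R_{\mathbf{u}}|W|$ in each chart, with the Riemannian diameter equal to the Euclidean one, the families $\{S_{\mathbf{u}}(W)\}$ provide covers of $K$ of arbitrarily small mesh with $\sum|S_{\mathbf{u}}(W)|^{\alpha^{*}}$ bounded. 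Hence $\mathcal{H}^{\alpha^{*}}(K)<\infty$, forcing $\dim_{\rm H}(K)\le\alpha^{*}$.

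For the lower bound $\alpha_{0}\ge\alpha^{*}$, I would construct the measure $\mu$ on $K$ attached to the Perron eigenvector of $A_{\alpha^{*}}$, so that $\mu\bigl(S_{\mathbf{u}}(K)\bigr)$ is comparable to $R_{\mathbf{u}}^{\alpha^{*}}$ along the finite type partitions, and then apply the mass distribution principle. The crux is the estimate $\mu(B(x,r))\le C\,r^{\alpha^{*}}$ for small $r$. Here I would invoke Theorem \ref{thm(0.4)}, by which (FTC) implies (WSC), together with the finiteness of neighborhood types, to bound the number of generation-$r$ cylinders of comparable size that meet a Riemannian ball $B(x,r)$; by contractivity these all lie in one chart, so the count reduces to the Euclidean one. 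This gives the required upper bound on $\mu(B(x,r))$ and hence $\dim_{\rm H}(K)\ge\alpha^{*}$, completing the identification $\dim_{\rm H}(K)=\alpha^{*}$.

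I expect the main obstacle to be exactly this ball estimate in the lower bound. On a general Riemannian manifold a metric ball need not be comparable to a coordinate ball, and the doubling and covering lemmas underpinning the Euclidean proofs are not directly available. The resolution is the local Euclidean reduction: once $r$ is small enough that every cylinder meeting $B(x,r)$ sits inside a single chart isometric to $\mathbb{R}^{n}$, the overlap counting collapses to the flat case treated in \cite{Jin-Yau_2005,Lau-Ngai_2007}. Verifying that such a threshold for $r$ may be chosen uniformly over $x\in K$ --- via compactness of $K$ and a Lebesgue-number argument for the finite chart cover --- is the technical heart of the argument.
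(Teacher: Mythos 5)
Your proposal is correct and follows essentially the same route as the paper: the paper likewise builds the measure $\widetilde{\mu}$ on $K$ from the Perron eigenvector of $A_{\alpha}$, proves a counting lemma (Lemma \ref{lem(4.1)}) bounding the number of comparably-sized cylinders meeting a given bounded set by means of Theorem \ref{thm(0.4)} ((FTC) $\Rightarrow$ (WSC)), and then defers to the mass-distribution argument of Lau--Ngai. The only divergence is in how that counting lemma is established --- the paper applies the Bishop--Gromov volume inequality directly on the manifold (legitimate here since locally Euclidean implies flat, hence nonnegative Ricci curvature), whereas you reduce to Euclidean charts via local isometries and a Lebesgue-number argument --- and both routes work.
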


In order to compute the Hausdorff dimension of certain attractors on Riemannian manifolds, it is necessary to study graph iterated function systems (see Example~\ref{exam(5.4)}). We define graph-directed iterated function systems (GIFSs) and graph finite type condition (GFTC) on Riemannian manifolds in Section \ref{S:41}. We obtain the following result for computing the Hausdorff dimension of graph self-similar sets (see Theorem \ref{thm(41.1)}), extending a result in \cite{Ngai-Wang-Dong_2010}.

\begin{thm}\label{thm(41.1)}
Let $M$ be a complete $n$-dimensional smooth orientable Riemannian manifold that is locally Euclidean. Assume that $G=(V,E)$ is a GIFS defined on $M$ satisfying (GFTC), and $K$ is the associated graph self-similar set. Let $\lambda_{\alpha}$ be the spectral radius of the associated weighted incidence matrix $A_{\alpha}$. Then $\dim_{{\rm H}}(K)=\alpha$, where $\alpha$ is the unique number such that $\lambda_{\alpha}=1$.
\end{thm}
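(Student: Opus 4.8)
The plan is to reduce the problem to a Euclidean computation by exploiting the locally Euclidean hypothesis, and then to adapt the graph-directed finite type machinery of Ngai \textit{et al.} \cite{Ngai-Wang-Dong_2010}. Since $M$ is locally Euclidean and each edge map $S_{e}$ is a contractive similitude, \cite[Lemma 2 of Theorem 3.6]{Kobayashi-Nomizu} shows that around each point of the attractor there is a neighborhood isometric to an open subset of $\mathbb{R}^{n}$, and on such a chart $S_{e}$ becomes a genuine Euclidean contractive similitude that preserves the contraction ratio $\rho_{e}$ and carries Riemannian balls to Euclidean balls of the same radius. Because $K$ is compact, I would cover it by finitely many such charts, fix a Lebesgue number $\delta_{0}$ for this cover, and record that all metric quantities entering the finite type analysis---contraction ratios, diameters, and volumes of balls of radius at most $\delta_{0}$---agree with their Euclidean counterparts under the isometries.

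Next I would set up the combinatorial data of (GFTC). Using the graph-directed analogue of the families $\mathcal{W}_{b}$, for each $0<b\le1$ one obtains a finite set of admissible paths $\mathbf{e}$ in $G$ whose compositions $S_{\mathbf{e}}$ have contraction ratio comparable to $b$; the images $S_{\mathbf{e}}(W_{t(\mathbf{e})})$, where $t(\mathbf{e})$ denotes the terminal vertex of $\mathbf{e}$, then cover $K$. The (GFTC) guarantees that, up to isometry, only finitely many neighborhood types occur among these cylinders at every scale, and this yields the finite weighted incidence matrix $A_{\alpha}$, whose entries record how each type subdivides into types at the next level, each contribution weighted by the appropriate factor $\rho^{\alpha}$. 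Since every entry of $A_{\alpha}$ is a strictly decreasing function of $\alpha$ while $\lambda_{\alpha}$ is continuous with $\lambda_{\alpha}\to0$ as $\alpha\to\infty$, Perron--Frobenius theory gives a unique $\alpha$ with $\lambda_{\alpha}=1$, as asserted.

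For the upper bound $\dim_{{\rm H}}(K)\le\alpha$, I would use the level-$b$ partition to cover $K$ by the cylinders $S_{\mathbf{e}}(W_{t(\mathbf{e})})$ and bound $\sum|S_{\mathbf{e}}(W_{t(\mathbf{e})})|^{\alpha}$ by the $\alpha$-weighted path sums; the finite type structure identifies these sums with entries of powers of $A_{\alpha}$, which remain uniformly bounded because $\lambda_{\alpha}=1$, forcing $\mathcal{H}^{\alpha}(K)<\infty$ and hence $\dim_{{\rm H}}(K)\le\alpha$. For the lower bound, I would build a mass distribution $\nu$ on $K$ from the normalized Perron eigenvector of $A_{\alpha}$ at the eigenvalue $\lambda_{\alpha}=1$, distributing mass consistently across the neighborhood types under the graph-directed iteration, and then invoke the mass distribution principle after verifying $\nu(B(x,r))\le Cr^{\alpha}$ for all small balls.

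The main obstacle I anticipate is this last estimate. Verifying $\nu(B(x,r))\le Cr^{\alpha}$ requires that a ball of radius $r$ meet only a uniformly bounded number of the level-$r$ cylinders, a bounded-overlap count that is a Euclidean fact flowing from the finite type (bounded neighborhood type) hypothesis. Because $M$ is only locally Euclidean, I must first ensure that balls of the relevant radii lie inside a single isometric chart, so that this Euclidean overlap count applies verbatim; the Lebesgue number $\delta_{0}$ supplies a uniform threshold below which this holds, and for radii above $\delta_{0}$ the estimate is trivial by compactness. Once localized to a chart, the remaining argument reduces to the Euclidean graph-directed computation in \cite{Ngai-Wang-Dong_2010}, and combining the two bounds yields $\dim_{{\rm H}}(K)=\alpha$.
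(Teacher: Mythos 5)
Your proposal is correct in outline and reproduces the skeleton that the paper (which simply defers to \cite{Ngai-Wang-Dong_2010} together with the weighted-incidence-matrix machinery of Section \ref{S:4}) actually relies on: the matrix $A_{\alpha}$ built from neighborhood types in the reduced graph, the measure constructed from a $1$-eigenvector and pushed forward to $K$, the covering argument for $\dim_{\rm H}(K)\le\alpha$, and the mass distribution principle fed by a bounded-overlap count for the lower bound. Where you genuinely diverge is in how the manifold geometry enters that overlap count. The paper never localizes to charts: its key estimate (Lemma \ref{lem(4.1)}, the graph analogue of which is what Theorem \ref{thm(41.1)} needs) is proved intrinsically, by combining the WSC multiplicity bound coming from (FTC)/(GFTC) with the Bishop--Gromov volume inequality $\nu(B_{r}(x))\le c_{n}r^{n}$, which applies because a locally Euclidean manifold is flat and hence has nonnegative Ricci curvature. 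You instead cover $K$ by finitely many isometric charts, fix a Lebesgue number $\delta_{0}$, and import the Euclidean count verbatim below that scale. Both routes work; yours is more elementary and transparent for the similitude case, but it requires the extra care you already flag (entire level-$r$ cylinders, not just ball centers, must sit inside one chart, so the threshold must be shrunk by the comparability constant), whereas the paper's intrinsic argument is the one that scales up to the conformal setting of Sections \ref{S:1}--\ref{S:3}. Two small points you gloss over that the paper's framework handles explicitly: the passage to the reduced graph $\mathbb{G}_{R}$ (identifying coincident maps so that the offspring cylinders genuinely partition each parent cylinder, which is what makes $\hat{\mu}$ additive), and the positivity/boundedness of the eigenvector components entering the estimate $\nu(B(x,r))\le Cr^{\alpha}$; neither is a gap in a sketch, but both need a sentence in a full write-up.
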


This paper is organized as follows. Section \ref{S:1} introduces the definition of CIFSs, some properties of (WSC), and gives the proof of Theorem \ref{thm(0.1)}. In Section \ref{S:2}, we study the absolute continuity of self-conformal measures on Riemannian manifolds and prove Theorems \ref{thm(0.2)} and \ref{thm(0.3)}. Section \ref{S:3} is devoted to the proof of Theorem \ref{thm(0.4)}. In Section \ref{S:4}, we study finite type IFSs of contractive similitudes and prove Theorem \ref{thm(0.5)}. Section \ref{S:41} is devoted to the proof of Theorem \ref{thm(41.1)}. Finally, we present some examples of CIFSs and GIFSs satisfying (FTC) and (GFTC) on Riemannian manifolds, respectively.

\section{The weak separation condition}\label{S:1}

Let $M$ be a complete $n$-dimensional smooth Riemannian manifold, $U\subset M$ be open and connected, and let $W\subset U$ be a compact set with $\overline{W^{\circ}}=W$.
Recall that a map $S:U\longrightarrow U$ is called \textit{conformal} if $S'(x)$ is a similarity matrix for any $x\in U$. We say that $\{S_{i}\}_{i=1}^{N}$ is a \textit{conformal iterated function system} (CIFS)
on $U$, if
\begin{itemize}
\item[$(a)$] for any $i\in\Sigma$, $S_{i}:U\longrightarrow S_{i}(U)\subset U$ is a conformal $C^{1+\varepsilon}$ diffeomorphism for some $\varepsilon\in(0,1)$;
\item[$(b)$] $S_{i}(W)\subset W$ for any $i\in\Sigma$;
\item[$(c)$] $0<|\det S'_{i}(x)|<1$ for any $i\in\Sigma$ and $x\in U$.
\end{itemize}

Since $M$ is a manifold, we can find an open and connected set $U_{1}$ such that $\overline{U_{1}}$ is compact and $W\subset U_{1}\subset \overline{U_{1}}\subset U$. According to \cite{Patzschke_1997}, conditions $(a)$--$(c)$ together imply the \textit{bounded distortion property} (BDP), without assuming any separation condition, i.e., there exists a constant $C_{1}\geq1$ such that for any $\mathbf{u}\in\Sigma^{*}$ and $x,y\in U_{1}$,
\begin{equation}\label{eq(1.1)}
C_{1}^{-n}\leq\frac{|\det S'_{\mathbf{u}}(x)|}{|\det S'_{\mathbf{u}}(y)|}\leq C_{1}^{n}.
\end{equation}
It follows that for any $\mathbf{u}\in\Sigma^*$,
\begin{equation}\label{eq:r_R_bound}
C_{1}^{-1}\leq\frac{r_{\mathbf u}}{R_{\mathbf u}}\leq\frac{R_{\mathbf u}}{r_{\mathbf u}}\leq C_{1}.
\end{equation}
Moreover, there exists a constant $C_{2}\geq1$ such that for any $\mathbf{u}\in\Sigma^{\ast}$ and $x,y\in W$,
\begin{equation}\label{eq(1.2)}
C_{2}^{-1}R_{\mathbf{u}}d(x,y)\leq d(S_{\mathbf{u}}(x),S_{\mathbf{u}}(y))\leq C_{2}R_{\mathbf{u}}d(x,y).
\end{equation}
Let $\nu$ be the Riemannian volume measure, and let $A\subset W$ be a measurable set. Denote the Jacobian determinant of a function $f$ by $\mathbf{J}f$. Then
\begin{equation}\label{eq(1.02)}
\nu\big(S_{\mathbf{u}}(A)\big)=\int_{A}\big|\mathbf{J}\big(S_{\mathbf{u}}(x)\big)\big|d\nu=\int_{A}\big|\det S'_{\mathbf{u}}(x)\big|d\nu.
\end{equation}
(see, e.g. \cite[Proposition 8.1.10]{Abraham-Marsden-Ratiu_2007}).

Note that for any $x\in W$ and $\mathbf{u},\mathbf{v}\in\Sigma^{\ast}$,
$$|\det S'_{\mathbf{u}\mathbf{v}}(x)|=|\det S'_{\mathbf{u}}(S_{\mathbf{v}}(x))\cdot S'_{\mathbf{v}}(x)|=|\det S'_{\mathbf{u}}(S_{\mathbf{v}}(x))|\cdot|\det S'_{\mathbf{v}}(x)|.$$
Hence $R_{\mathbf{u}\mathbf{v}}\leq R_{\mathbf{u}}R_{\mathbf{v}}$ and $r_{\mathbf{u}\mathbf{v}}\geq r_{\mathbf{u}}r_{\mathbf{v}}$. In particular, for $S=S_{\mathbf{u}}\in\mathcal{A}_{b},\mathbf{u}=(u_{1},\dots,u_{k})\in\mathcal{W}_{b}$,
$$\begin{aligned}
br&<R_{u_{1},\dots,u_{k-1}}r_{u_{k}}\\
&\leq C_{1}r_{u_{1},\dots,u_{k-1}}r_{u_{k}}\quad\text{(by (\ref{eq:r_R_bound}))}\\
&\leq C_{1}r_{\mathbf{u}}\\
&\leq C_{1}R_{\mathbf{u}},
\end{aligned}$$
i.e.,
\begin{equation}\label{eq(1.9)}
b<\frac{C_{1}}{r}R_{S}\quad\text{for all }S\in\mathcal{A}_{b}.
\end{equation}

\begin{lem}\label{lem(1.1)}
Let $M$ be a complete $n$-dimensional smooth orientable Riemannian manifold, $U\subset M$ be open and connected, and let $W\subset U$ be a compact set with $\overline{W^{\circ}}=W$. Let $\{S_{i}\}_{i=1}^{N}$ be a CIFS on $U$ defined as above. Then the following hold.
\begin{itemize}
\item[$(a)$] For any $\mathbf{u}\in \mathcal{W}_{b},~0<b\leq1$, and any measurable set $A\subset W$,
$$\bigg(\frac{br}{C_{1}}\bigg)^{n}\nu(A)\leq\nu(S_{\mathbf{u}}(A))\leq b^{n}\nu(A).$$
\item[$(b)$] For any $\mathbf{u},\mathbf{v}\in \mathcal{W}_{b},~0<b\leq1$, and any measurable set $A\subset W$,
$$\bigg(\frac{r}{C_{1}}\bigg)^{n}\nu(S_{\mathbf{v}}(A))\leq\nu(S_{\mathbf{u}}(A))\leq \bigg(\frac{C_{1}}{r}\bigg)^{n}\nu(S_{\mathbf{v}}(A)).$$
\end{itemize}
\end{lem}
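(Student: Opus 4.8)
The plan is to reduce both statements to the change-of-variables formula \eqref{eq(1.02)} together with the uniform bounds on $|\det S'_{\mathbf u}|$ encoded in the definitions of $r_{\mathbf u}$ and $R_{\mathbf u}$. No separation hypothesis is needed beyond the bounded distortion property already recorded in \eqref{eq:r_R_bound}.

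For part $(a)$ I would start from
$$\nu\bigl(S_{\mathbf u}(A)\bigr)=\int_A\bigl|\det S'_{\mathbf u}(x)\bigr|\,d\nu$$
and bound the integrand pointwise. By the definitions of $R_{\mathbf u}$ and $r_{\mathbf u}$ one has $r_{\mathbf u}^{\,n}\le|\det S'_{\mathbf u}(x)|\le R_{\mathbf u}^{\,n}$ for all $x\in W$, whence
$$r_{\mathbf u}^{\,n}\,\nu(A)\le\nu\bigl(S_{\mathbf u}(A)\bigr)\le R_{\mathbf u}^{\,n}\,\nu(A).$$
The upper bound in $(a)$ then follows at once from $R_{\mathbf u}\le b$, which holds because $\mathbf u\in\mathcal{W}_b$. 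For the lower bound it remains to verify $r_{\mathbf u}>br/C_1$; but this is precisely the intermediate estimate $br<C_1 r_{\mathbf u}$ appearing in the chain of inequalities preceding \eqref{eq(1.9)}, so raising it to the $n$-th power gives $r_{\mathbf u}^{\,n}\,\nu(A)\ge(br/C_1)^n\nu(A)$ and completes $(a)$.

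For part $(b)$ I would simply combine the two-sided estimate of part $(a)$ applied separately to $\mathbf u$ and to $\mathbf v$. If $\nu(A)=0$, the upper bound of $(a)$ forces $\nu(S_{\mathbf u}(A))=\nu(S_{\mathbf v}(A))=0$ and both inequalities are trivial, so I may assume $\nu(A)>0$. From part $(a)$,
$$\nu\bigl(S_{\mathbf u}(A)\bigr)\le b^n\nu(A)\quad\text{and}\quad\nu\bigl(S_{\mathbf v}(A)\bigr)\ge\Bigl(\frac{br}{C_1}\Bigr)^n\nu(A),$$
and eliminating $\nu(A)$ between them yields $\nu(S_{\mathbf u}(A))\le(C_1/r)^n\,\nu(S_{\mathbf v}(A))$, which is the right-hand inequality of $(b)$. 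The left-hand inequality follows by interchanging the roles of $\mathbf u$ and $\mathbf v$, the hypotheses being symmetric in the two words.

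I do not expect a genuine obstacle: the lemma is essentially bookkeeping on top of \eqref{eq(1.02)} and the bound already established in \eqref{eq(1.9)}. The only points needing a little care are to invoke the sharper intermediate inequality $br<C_1 r_{\mathbf u}$ (rather than the weaker displayed form \eqref{eq(1.9)} phrased in terms of $R_S$) so as to obtain the lower bound in $(a)$ with the stated constant, and to dispose of the degenerate case $\nu(A)=0$ before dividing in part $(b)$.
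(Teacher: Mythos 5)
Your proposal is correct and follows essentially the same route as the paper: both reduce the claim to the change-of-variables identity \eqref{eq(1.02)} together with the pointwise bounds $b^{n}\geq|\det S'_{\mathbf u}(x)|>(br/C_{1})^{n}$ on $W$ (the paper re-derives the lower bound from the bounded distortion property, while you invoke the equivalent chain $br<C_{1}r_{\mathbf u}$ already displayed before \eqref{eq(1.9)}), and both obtain $(b)$ by chaining the two inequalities of $(a)$ through $b^{n}\nu(A)$. The extra care about $\nu(A)=0$ is harmless but unnecessary, since no division by $\nu(A)$ is actually required.
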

\begin{proof}
$(a)$ Let $\mathbf{u}=(u_{1},\dots,u_{k})\in \mathcal{W}_{b}$. Then by the definition of $\mathcal{W}_{b}$,
\begin{equation}\label{eq(1.3)}
\sup_{x\in W}|\det S'_{\mathbf{u}}(x)|\leq b^{n}\leq\sup_{x\in W}|\det S'_{\mathbf{u}^{-}}(x)|.
\end{equation}
Hence for any $x\in W$,
$$\begin{aligned}
b^{n}&\geq |\det S'_{\mathbf{u}}(x)|=|\det S'_{\mathbf{u}^{-}}(S_{u_{k}}(x))|\cdot|\det S'_{u_{k}}(x)|\quad\text{(by (\ref{eq(1.3)}))}\\
&\geq r^{n}\inf_{x\in W}|\det S'_{\mathbf{u}^{-}}(S_{u_{k}}(x))|\\
&\geq \bigg(\frac{r}{C_{1}}\bigg)^{n}\sup_{x\in W}|\det S'_{\mathbf{u}^{-}}(S_{u_{k}}(x))|\quad\text{(by (\ref{eq(1.1)}))}\\
&>\bigg(\frac{rb}{C_{1}}\bigg)^{n}\quad\text{(by (\ref{eq(1.3)}))}.
\end{aligned}$$
It follows that
$$\int_{A}b^{n}d\nu\geq\int_{A}|\det S'_{\mathbf{u}}(x)|d\nu\geq\int_{A}\bigg(\frac{rb}{C_{1}}\bigg)^{n}d\nu,$$
which proves $(a)$ by (\ref{eq(1.02)}).

$(b)$ Making use of part $(a)$, we have
$$\nu(S_{\mathbf{u}}(A))\leq b^{n}\nu(A)\leq\bigg(\frac{C_{1}}{r}\bigg)^{n}\nu(S_{\mathbf{v}}(A)).$$
Similarly,
$$\nu(S_{\mathbf{v}}(A))\leq b^{n}\nu(A)\leq\bigg(\frac{C_{1}}{r}\bigg)^{n}\nu(S_{\mathbf{u}}(A)),$$
which proves $(b)$.
\end{proof}

Let $\mathcal{F}:=\{S_{\mathbf{v}}S_{\mathbf{u}}^{-1}:\mathbf{u},\mathbf{v}\in\Sigma^{\ast}\}$. It is possible that $\tau=S_{\mathbf{v}}S_{\mathbf{u}}^{-1}$ can be simplified to $S_{\mathbf{v}'}S_{\mathbf{u}'}^{-1}$. Thus the domain of
$\tau$ is $S_{\mathbf{u}'}(W)$ (containing $S_{\mathbf{u}}(W)$). Denote the domain of $\tau$ by ${\rm Dom}(\tau)$. The proof of the following lemma is similar to that of \cite[Lemma 2.2]{Lau-Ngai-Wang_2009} and is omitted.

\begin{lem}\label{lem(1.2)}
Assume the same hypotheses of Lemma \ref{lem(1.1)}. Then for any $\mathbf{u},\mathbf{v}\in\Sigma^{\ast}$ and any $x,y\in{\rm Dom}(S_{\mathbf{v}'}S_{\mathbf{u}'}^{-1})$, we have
$$\frac{|\det (S_{\mathbf{v}}S_{\mathbf{u}}^{-1})'(x)|}{|\det (S_{\mathbf{v}}S_{\mathbf{u}}^{-1})'(y)|}\leq C_{1}^{2n}.$$
\end{lem}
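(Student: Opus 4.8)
The plan is to prove the bounded distortion estimate for the composed maps $S_{\mathbf{v}}S_{\mathbf{u}}^{-1}$ by decomposing its Jacobian determinant via the chain rule and controlling each factor with the bounded distortion property \eqref{eq(1.1)} already established. First I would write, for any point $x$ in the common domain,
\[
\bigl|\det (S_{\mathbf{v}}S_{\mathbf{u}}^{-1})'(x)\bigr|
=\bigl|\det S_{\mathbf{v}}'\bigl(S_{\mathbf{u}}^{-1}(x)\bigr)\bigr|\cdot\bigl|\det (S_{\mathbf{u}}^{-1})'(x)\bigr|
=\frac{\bigl|\det S_{\mathbf{v}}'\bigl(S_{\mathbf{u}}^{-1}(x)\bigr)\bigr|}{\bigl|\det S_{\mathbf{u}}'\bigl(S_{\mathbf{u}}^{-1}(x)\bigr)\bigr|},
\]
using the inverse function rule $\det (S_{\mathbf{u}}^{-1})'(x)=1/\det S_{\mathbf{u}}'(S_{\mathbf{u}}^{-1}(x))$. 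The key observation is that the point $\xi:=S_{\mathbf{u}}^{-1}(x)$ lies in $W$ (equivalently $x\in S_{\mathbf{u}}(W)$), so the numerator and denominator are values of $|\det S_{\mathbf{v}}'|$ and $|\det S_{\mathbf{u}}'|$ at the same base point in $W\subset U_1$.

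Next I would form the ratio appearing in the statement, namely the quotient at two points $x$ and $y$, and substitute the above expression at each:
\[
\frac{\bigl|\det (S_{\mathbf{v}}S_{\mathbf{u}}^{-1})'(x)\bigr|}{\bigl|\det (S_{\mathbf{v}}S_{\mathbf{u}}^{-1})'(y)\bigr|}
=\frac{\bigl|\det S_{\mathbf{v}}'(\xi)\bigr|}{\bigl|\det S_{\mathbf{u}}'(\xi)\bigr|}\cdot\frac{\bigl|\det S_{\mathbf{u}}'(\eta)\bigr|}{\bigl|\det S_{\mathbf{v}}'(\eta)\bigr|},
\]
where $\xi=S_{\mathbf{u}}^{-1}(x)$ and $\eta=S_{\mathbf{u}}^{-1}(y)$ both lie in $W$. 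I would then regroup the four factors so that the two copies of $|\det S_{\mathbf{v}}'|$ form one bounded-distortion quotient and the two copies of $|\det S_{\mathbf{u}}'|$ form another:
\[
\frac{\bigl|\det (S_{\mathbf{v}}S_{\mathbf{u}}^{-1})'(x)\bigr|}{\bigl|\det (S_{\mathbf{v}}S_{\mathbf{u}}^{-1})'(y)\bigr|}
=\frac{\bigl|\det S_{\mathbf{v}}'(\xi)\bigr|}{\bigl|\det S_{\mathbf{v}}'(\eta)\bigr|}\cdot\frac{\bigl|\det S_{\mathbf{u}}'(\eta)\bigr|}{\bigl|\det S_{\mathbf{u}}'(\xi)\bigr|}.
\]
Applying \eqref{eq(1.1)} to each quotient (valid since $\xi,\eta\in W\subset U_1$) bounds each by $C_1^{n}$, yielding the product bound $C_1^{2n}$, which is exactly the claim.

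The main subtlety, rather than a computational obstacle, is the simplification issue flagged in the text: $\tau=S_{\mathbf{v}}S_{\mathbf{u}}^{-1}$ may equal $S_{\mathbf{v}'}S_{\mathbf{u}'}^{-1}$ with a larger domain $S_{\mathbf{u}'}(W)\supset S_{\mathbf{u}}(W)$, and the lemma asserts the estimate on all of $\mathrm{Dom}(S_{\mathbf{v}'}S_{\mathbf{u}'}^{-1})$. Since $\tau$ is a single well-defined map regardless of representation, I would simply carry out the computation above using the reduced representation $(\mathbf{u}',\mathbf{v}')$, so that the base points $\xi=S_{\mathbf{u}'}^{-1}(x)$ and $\eta=S_{\mathbf{u}'}^{-1}(y)$ range over all of $W$ as $x,y$ range over the full domain $S_{\mathbf{u}'}(W)$; the bound \eqref{eq(1.1)} applies verbatim since it holds for arbitrary words in $\Sigma^{*}$ and all points of $U_1\supset W$. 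I would note that this is precisely why the estimate is uniform over the enlarged domain. Because the paper explicitly states that the proof mirrors \cite[Lemma 2.2]{Lau-Ngai-Wang_2009}, the only genuinely new point to verify is that \eqref{eq(1.1)}, which was imported from the Euclidean bounded distortion theory of \cite{Patzschke_1997}, transfers unchanged to the Riemannian setting — but this is already asserted in the discussion preceding Lemma \ref{lem(1.1)}, so I would invoke it directly.
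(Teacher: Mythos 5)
Your proposal is correct and follows essentially the same route the paper intends: the chain-rule/inverse-function decomposition $|\det\tau'(x)|=|\det S'_{\mathbf{v}'}(\xi)|/|\det S'_{\mathbf{u}'}(\xi)|$ with $\xi=S_{\mathbf{u}'}^{-1}(x)\in W$ is exactly the identity the paper itself uses in the proof of Lemma~\ref{lem(1.3)}$(a)$, and two applications of \eqref{eq(1.1)} give the bound $C_1^{2n}$. Your handling of the reduced representation $(\mathbf{u}',\mathbf{v}')$ to cover the full domain $S_{\mathbf{u}'}(W)$ is also the right way to address the only subtle point.
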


\begin{lem}\label{lem(1.3)}
Assume the same hypotheses of Lemma \ref{lem(1.1)}. Let $\tau=S_{\mathbf{v}}S_{\mathbf{u}}^{-1}=S_{\mathbf{v}'}S_{\mathbf{u}'}^{-1}\in\mathcal{F}$ with ${\rm Dom}(\tau)=S_{\mathbf{u}'}(W)$. Then the following hold.
\begin{itemize}
\item[$(a)$] For any measurable subset $A\subset {\rm Dom}(\tau)$,
$$\bigg(\frac{r_{\mathbf{v}'}}{R_{\mathbf{u}'}}\bigg)^{n}\nu(A)\leq \nu(\tau(A))\leq\bigg(\frac{R_{\mathbf{v}'}}{r_{\mathbf{u}'}}\bigg)^{n}\nu(A).$$
\item[$(b)$] For any $A,B$ belonging to some collection $\mathcal{C}$ of measurable subsets of $W$, suppose $C\geq1$ is a constant such that
$$C^{-1}\nu(B)\leq\nu(A)\leq C\nu(B).$$
Then for any $A,B\in\mathcal{C}$ such that $A,B\subset{\rm Dom}(\tau)$,
$$C^{-1}C_{1}^{-2n}\nu(\tau(B))\leq\nu(\tau(A))\leq CC_{1}^{2n}\nu(\tau(B)).$$
\end{itemize}
\end{lem}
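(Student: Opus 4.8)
The plan is to prove both parts by reducing the statement about the map $\tau$ to the Jacobian-determinant estimates already available. For part $(a)$, the key observation is that $\tau=S_{\mathbf{v}'}S_{\mathbf{u}'}^{-1}$ acts on $\mathrm{Dom}(\tau)=S_{\mathbf{u}'}(W)$ by first applying $S_{\mathbf{u}'}^{-1}$ to land in $W$ and then applying $S_{\mathbf{v}'}$. I would use the change-of-variables formula \eqref{eq(1.02)} for $\tau$ on $A$, writing
\begin{equation*}
\nu(\tau(A))=\int_{A}\bigl|\det\tau'(x)\bigr|\,d\nu,
\end{equation*}
and then estimate $|\det\tau'(x)|$ pointwise. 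By the chain rule, $\tau'(x)=S'_{\mathbf{v}'}\bigl(S_{\mathbf{u}'}^{-1}(x)\bigr)\cdot\bigl(S_{\mathbf{u}'}^{-1}\bigr)'(x)$, so that $|\det\tau'(x)|=|\det S'_{\mathbf{v}'}(y)|/|\det S'_{\mathbf{u}'}(y)|$ where $y=S_{\mathbf{u}'}^{-1}(x)\in W$. The numerator is bounded above by $R_{\mathbf{v}'}^{n}$ and below by $r_{\mathbf{v}'}^{n}$, and the denominator is bounded above by $R_{\mathbf{u}'}^{n}$ and below by $r_{\mathbf{u}'}^{n}$, directly from the definitions of $r_{\mathbf{u}},R_{\mathbf{u}}$. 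Combining these gives $(r_{\mathbf{v}'}/R_{\mathbf{u}'})^{n}\le|\det\tau'(x)|\le(R_{\mathbf{v}'}/r_{\mathbf{u}'})^{n}$ for every $x$, and integrating over $A$ yields $(a)$.

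For part $(b)$, I would exploit the uniform comparability of $|\det\tau'|$ over $\mathrm{Dom}(\tau)$ supplied by Lemma \ref{lem(1.2)}. Fix $A,B\in\mathcal{C}$ with $A,B\subset\mathrm{Dom}(\tau)$. Write $\nu(\tau(A))=\int_{A}|\det\tau'|\,d\nu$ and $\nu(\tau(B))=\int_{B}|\det\tau'|\,d\nu$ via \eqref{eq(1.02)}. Let $m:=\inf_{\mathrm{Dom}(\tau)}|\det\tau'|$ and $M:=\sup_{\mathrm{Dom}(\tau)}|\det\tau'|$; Lemma \ref{lem(1.2)} gives $M/m\le C_{1}^{2n}$. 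Then
\begin{equation*}
\nu(\tau(A))\le M\,\nu(A)\le M\,C\,\nu(B)\le M\,C\,\frac{1}{m}\nu(\tau(B))\le C\,C_{1}^{2n}\,\nu(\tau(B)),
\end{equation*}
where the first inequality uses the sup bound, the second uses the hypothesis $\nu(A)\le C\nu(B)$, and the third uses $\nu(B)\le m^{-1}\nu(\tau(B))$ from the inf bound. The lower inequality follows symmetrically from $C^{-1}\nu(B)\le\nu(A)$, giving the two-sided bound with constant $C\,C_{1}^{2n}$.

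The main point requiring care is not the arithmetic but the well-definedness of these pointwise Jacobian estimates under the reduction $S_{\mathbf{v}}S_{\mathbf{u}}^{-1}=S_{\mathbf{v}'}S_{\mathbf{u}'}^{-1}$. Since $\tau$ may admit several such representations, the bounds in $(a)$ are stated in terms of the specific reduced representation with $\mathrm{Dom}(\tau)=S_{\mathbf{u}'}(W)$, so I must consistently use $\mathbf{u}',\mathbf{v}'$ throughout and verify that the change-of-variables identity \eqref{eq(1.02)} applies on $S_{\mathbf{u}'}(W)$ rather than the possibly smaller $S_{\mathbf{u}}(W)$. The only genuine subtlety is confirming that $|\det\tau'|$ is expressible as the ratio $|\det S'_{\mathbf{v}'}|/|\det S'_{\mathbf{u}'}|$ after composing with $S_{\mathbf{u}'}^{-1}$, which is where conformality and the $C^{1+\varepsilon}$ diffeomorphism hypothesis $(a)$ on the $S_{i}$ guarantee that $S_{\mathbf{u}'}^{-1}$ is differentiable on $\mathrm{Dom}(\tau)$ and the chain rule applies. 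Once this is in place, Lemma \ref{lem(1.2)} does all the work for part $(b)$ and the definitions of $r,R$ do all the work for part $(a)$.
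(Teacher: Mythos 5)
Your proposal is correct and follows essentially the same route as the paper: part $(a)$ is the identical computation $|\det\tau'(x)|=|\det S'_{\mathbf{v}'}(y)|/|\det S'_{\mathbf{u}'}(y)|$ with $y=S_{\mathbf{u}'}^{-1}(x)$, bounded by the definitions of $r_{\mathbf{u}'},R_{\mathbf{u}'},r_{\mathbf{v}'},R_{\mathbf{v}'}$ and integrated via \eqref{eq(1.02)}. For part $(b)$ the paper applies part $(a)$ twice and then \eqref{eq:r_R_bound}, whereas you invoke Lemma \ref{lem(1.2)} through $\sup|\det\tau'|/\inf|\det\tau'|\leq C_{1}^{2n}$; these are the same distortion estimate in slightly different packaging and both yield the constant $CC_{1}^{2n}$.
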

\begin{proof}
$(a)$ For $x\in {\rm Dom}(\tau)$, let $y=S_{\mathbf{u}'}^{-1}(x)\in W$. Then
$$|\det\tau'(x)|=|\det (S_{\mathbf{v}'}S_{\mathbf{u}'}^{-1})'(x)|=|\det S'_{\mathbf{v}'}(S_{\mathbf{u}'}^{-1}(x))|\cdot|\det (S_{\mathbf{u}'}^{-1})'(x)|=\frac{|\det S'_{\mathbf{v}'}(y)|}{|\det S'_{\mathbf{u}'}(y)|}.$$
Hence
$$\bigg(\frac{r_{\mathbf{v}'}}{R_{\mathbf{u}'}}\bigg)^{n}\leq
|\det\tau'(x)|\leq\bigg(\frac{R_{\mathbf{v}'}}{r_{\mathbf{u}'}}\bigg)^{n}.$$
Thus,
$$\int_{A}\bigg(\frac{r_{\mathbf{v}'}}{R_{\mathbf{u}'}}\bigg)^{n}d\nu\leq
\int_{A}|\det\tau'(x)|d\nu\leq\int_{A}\bigg(\frac{R_{\mathbf{v}'}}{r_{\mathbf{u}'}}\bigg)^{n}d\nu,$$
which proves $(a)$ by (\ref{eq(1.02)}).

$(b)$ Making use of part $(a)$ and (\ref{eq(1.1)}), we have
$$\begin{aligned}
\nu(\tau(A))&\leq\bigg(\frac{R_{\mathbf{v}'}}{r_{\mathbf{u}'}}\bigg)^{n}\nu(A)
\leq C\bigg(\frac{R_{\mathbf{v}'}}{r_{\mathbf{u}'}}\bigg)^{n}\nu(B)\\
&\leq C\bigg(\frac{R_{\mathbf{v}'}}{r_{\mathbf{u}'}}\bigg)^{2n}
\nu(\tau(B))\quad\text{(by part $(a)$)}\\
&\leq CC_{1}^{2n}\nu(\tau(B))\quad\text{(by (\ref{eq:r_R_bound}))}.
\end{aligned}$$
On the other hand,
$$\begin{aligned}
\nu(\tau(A))&\geq\bigg(\frac{r_{\mathbf{v}'}}{R_{\mathbf{u}'}}\bigg)^{n}\nu(A)
\geq C^{-1}\bigg(\frac{r_{\mathbf{v}'}}{R_{\mathbf{u}'}}\bigg)^{n}\nu(B)\\
&\geq C^{-1}\bigg(\frac{r_{\mathbf{v}'}}{R_{\mathbf{u}'}}\bigg)^{2n}
\nu(\tau(B))\quad\text{(by part $(a)$)}\\
&\geq C^{-1}C_{1}^{-2n}\nu(\tau(B))\quad\text{(by (\ref{eq:r_R_bound}))}.
\end{aligned}$$
This proves $(b)$.
\end{proof}

\begin{lem}\label{lem(1.30)} (Bishop-Gromov inequality (see, e.g. \cite{Anderson_1990,Berger_2003,Bishop-Crittenden}))\label{lem(1.00)}
Let $M$ be a complete $n$-dimensional Riemannian manifold with non-negative Ricci curvature, and $B_{r}(x)$ be an $r$-ball in $M$. Then
$$\nu(B_{r}(x))\leq c_{n}r^{n},$$
where $c_{n}=\pi^{\frac{n}{2}}/\Gamma(\frac{n}{2}+1)$ is the volume of the unit ball in $\mathbb{R}^{n}$.
\end{lem}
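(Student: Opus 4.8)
The plan is to prove the stronger \emph{Bishop volume comparison}, namely that the area density of geodesic spheres is dominated pointwise by the Euclidean one, and then integrate in polar coordinates. First I would fix $x\in M$ and introduce geodesic polar coordinates centered at $x$. Let $\exp_{x}:T_{x}M\to M$ be the exponential map, and for each unit vector $\theta\in S_{x}:=\{v\in T_{x}M:|v|=1\}$ let $c(\theta)\in(0,\infty]$ denote the distance to the cut locus along the geodesic $t\mapsto\exp_{x}(t\theta)$. Writing the segment domain $\mathcal{D}:=\{t\theta:0\le t<c(\theta),\ \theta\in S_{x}\}$, it is classical that $\exp_{x}$ maps $\mathcal{D}$ diffeomorphically onto $M$ minus a set of $\nu$-measure zero, so that
$$\nu(B_{r}(x))=\int_{S_{x}}\int_{0}^{\min\{r,\,c(\theta)\}}\mathcal{A}(t,\theta)\,dt\,d\sigma(\theta),$$
where $\mathcal{A}(t,\theta)$ is the Jacobian density of $\exp_{x}$ in polar form and $d\sigma$ is the volume element on $S_{x}\cong S^{n-1}$.

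Second, I would establish the pointwise comparison $\mathcal{A}(t,\theta)\le t^{n-1}$. Along a fixed unit-speed geodesic $\gamma$, set $m(t):=\partial_{t}\log\mathcal{A}(t,\theta)$, the mean curvature of the geodesic sphere of radius $t$. The standard Jacobi-field computation gives the Riccati-type identity $m'(t)=-\mathrm{Ric}(\dot\gamma,\dot\gamma)-|\mathrm{II}|^{2}$, where $\mathrm{II}$ is the second fundamental form of the sphere; Cauchy--Schwarz yields $|\mathrm{II}|^{2}\ge m^{2}/(n-1)$, and the hypothesis $\mathrm{Ric}\ge0$ then produces the differential inequality
$$m'(t)+\frac{m(t)^{2}}{n-1}\le 0.$$
Comparing with the Euclidean model, in which $m_{0}(t)=(n-1)/t$ solves this with equality, and using the initial asymptotics $\mathcal{A}(t,\theta)\sim t^{n-1}$ (equivalently $m(t)-(n-1)/t\to0$) as $t\to0^{+}$, I would deduce $m(t)\le(n-1)/t$ for all admissible $t$. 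Integrating $\partial_{t}\log\!\big(\mathcal{A}(t,\theta)/t^{n-1}\big)=m(t)-(n-1)/t\le0$ shows that $\mathcal{A}(t,\theta)/t^{n-1}$ is non-increasing; since it tends to $1$ as $t\to0^{+}$, it follows that $\mathcal{A}(t,\theta)\le t^{n-1}$.

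Finally, I would conclude by integration. Replacing $\mathcal{A}(t,\theta)$ by $t^{n-1}$ and enlarging the inner interval from $[0,\min\{r,c(\theta)\}]$ to $[0,r]$ (both steps only increase the integral) gives
$$\nu(B_{r}(x))\le\int_{S_{x}}\int_{0}^{r}t^{n-1}\,dt\,d\sigma(\theta)=\frac{r^{n}}{n}\,\sigma(S_{x})=\frac{\omega_{n-1}}{n}\,r^{n}=c_{n}r^{n},$$
where $\omega_{n-1}=\sigma(S^{n-1})$ and $c_{n}=\omega_{n-1}/n=\pi^{n/2}/\Gamma(n/2+1)$ is the volume of the Euclidean unit ball.

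The main obstacle is the second step: deriving and rigorously justifying the Riccati comparison, in particular controlling the singular behavior of $m(t)$ and $\mathcal{A}(t,\theta)$ at $t=0$ so that the normalization $\mathcal{A}/t^{n-1}\to1$ is valid, and ensuring the inequality persists up to the first conjugate point. Handling the cut locus is comparatively routine here, since for an \emph{upper} bound the truncation of the $t$-integral only works in our favor.
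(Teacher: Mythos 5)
Your proposal is correct: it is the standard proof of Bishop's volume comparison via geodesic polar coordinates, the traced Riccati inequality $m'+m^{2}/(n-1)\le 0$ under $\mathrm{Ric}\ge 0$, and the monotonicity of $\mathcal{A}(t,\theta)/t^{n-1}$, with the cut locus handled correctly since truncating the radial integral only helps an upper bound. The paper does not prove this lemma at all --- it simply cites the literature --- and the references it cites contain essentially the argument you give, so there is nothing to compare beyond noting that you have supplied the standard proof the authors chose to omit.
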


The following proposition generalizes \cite[Proposition 3.1]{Lau-Ngai-Wang_2009} to manifolds.

\begin{prop}\label{prop(1.1)}
Let $M$ be a complete $n$-dimensional orientable Riemannian manifold with non-negative Ricci curvature. Assume the same hypotheses of Lemma \ref{lem(1.1)}. Then the following are equivalent:
\begin{itemize}
\item[$(a)$] $\{S_{i}\}_{i=1}^{N}$ satisfies (WSC);
\item[$(b)$] there exists $a>0$ and a nonempty subset $D\subset W$ such that $\gamma_{a,D}<\infty$;
\item[$(c)$] for any $a>0$ and any nonempty subset $D\subset W$, $\gamma_{a,D}<\infty$;
\item[$(d)$] for any $D\subset W$ there exists $\gamma=\gamma(D)$ (depending only on $D$) such that for any $0<b\leq1$ and $x\in W$,
$$\#\{S\in\mathcal{A}_{b}:x\in S(D)\}\leq\gamma.$$
\end{itemize}
\end{prop}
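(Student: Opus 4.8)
The plan is to establish the four statements as equivalent through the chain $(a)\Rightarrow(c)\Rightarrow(b)\Rightarrow(a)$ together with the separate loop $(a)\Rightarrow(d)\Rightarrow(a)$, isolating $(a)\Rightarrow(c)$, $(a)\Rightarrow(d)$ and $(b)\Rightarrow(a)$ as the only implications where the geometry of $M$ is used. The implications $(c)\Rightarrow(b)$ and $(d)\Rightarrow(a)$ are immediate specializations: for $(c)\Rightarrow(b)$ I would fix any $a>0$ and any nonempty $D\subset W$; for $(d)\Rightarrow(a)$ I would choose a set $D\subset W$ with $D^{\circ}\neq\emptyset$, which exists because $\overline{W^{\circ}}=W$, and then $(d)$ reads off exactly (WSC).

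The heart of the forward direction is a volume-packing estimate. Assume (WSC) holds with separating set $D_{0}$ (so $\nu(D_{0})>0$, as $D_{0}^{\circ}\neq\emptyset$) and constant $\gamma_{0}$. Fix $a>0$, $D\subset W$, a bounded $A\subset M$, and set $b=a|A|$. Let $S_{1},\dots,S_{m}$ be the distinct maps of $\mathcal{A}_{a,A,D}$. Since $D\subset W$, each $S_{j}(W)$ meets $A$ and has diameter at most $C_{2}b|W|$ by \eqref{eq(1.2)}, so all the sets $S_{j}(D_{0})\subset S_{j}(W)\subset W$ lie in one ball $B$ of radius $b(1/a+C_{2}|W|)$ centred at a fixed point of $A$. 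By (WSC), each $y\in W$ lies in at most $\gamma_{0}$ of the sets $\{S(D_{0}):S\in\mathcal{A}_{b}\}$, hence
$$\sum_{j=1}^{m}\nu\big(S_{j}(D_{0})\big)=\int\#\{j:y\in S_{j}(D_{0})\}\,d\nu(y)\le\gamma_{0}\,\nu(B).$$
Lemma \ref{lem(1.1)}$(a)$ gives $\nu(S_{j}(D_{0}))\ge(br/C_{1})^{n}\nu(D_{0})$ from below, while Lemma \ref{lem(1.00)} gives $\nu(B)\le c_{n}b^{n}(1/a+C_{2}|W|)^{n}$ from above. Dividing, the factors $b^{n}$ cancel and $m$ is bounded by a constant independent of $A$, $b$ and $D$, proving $(a)\Rightarrow(c)$. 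Running the identical estimate on the maps with $x\in S(D)$, and using $x\in S(W)$ to place every $S(D_{0})$ in $B(x,C_{2}b|W|)$, yields $(a)\Rightarrow(d)$.

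The subtler implication is $(b)\Rightarrow(a)$, where a one-parameter hypothesis must be promoted to (WSC) for a set with nonempty interior. Suppose $\gamma_{a_{0},D_{0}}<\infty$ and fix $z_{0}\in D_{0}$. Because $S(z_{0})\in S(D_{0})$, for every $A$ with $a_{0}|A|=b$ one has $\{S\in\mathcal{A}_{b}:S(z_{0})\in A\}\subset\mathcal{A}_{a_{0},A,D_{0}}$, so at most $\gamma_{a_{0},D_{0}}$ maps of $\mathcal{A}_{b}$ send $z_{0}$ into any single such $A$. I would then take $D:=\overline{B(z_{0},1)}\cap W$, which has nonempty interior since $W^{\circ}$ is dense in $W$. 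If $x\in S(D)$ then $d(S(z_{0}),x)\le C_{2}b$ by \eqref{eq(1.2)}, so all points $S(z_{0})$ for $S$ counted in $\#\{S\in\mathcal{A}_{b}:x\in S(D)\}$ lie in $\overline{B(x,C_{2}b)}$. The doubling property of $M$ (a consequence of non-negative Ricci curvature via Bishop--Gromov) covers this ball by a number $N_{0}$ of sets of diameter $b/a_{0}$, with $N_{0}$ depending only on the fixed ratio $C_{2}a_{0}$; each such set captures at most $\gamma_{a_{0},D_{0}}$ of the maps, giving the uniform bound $N_{0}\gamma_{a_{0},D_{0}}$, which is precisely (WSC).

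I expect $(b)\Rightarrow(a)$ to be the main obstacle. Unlike in $\mathbb{R}^{n}$, where one rescales and exploits translation invariance of Lebesgue measure, here the covering number $N_{0}$ must be extracted from the doubling property rather than computed, and one must ensure the covering sets have diameter exactly $b/a_{0}$ so that the scale $a_{0}|A|=b$ matches the generation $\mathcal{A}_{b}$ in the hypothesis. A recurring secondary difficulty is that balls in $M$ admit no exact volume formula, so each counting step must route through the one-sided estimates of Lemma \ref{lem(1.1)} and Lemma \ref{lem(1.00)} rather than an equality; this is exactly where non-negativity of the Ricci curvature enters.
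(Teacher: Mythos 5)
Your proposal is correct, but it is organized differently from the paper's proof, which runs the single cycle $(a)\Rightarrow(b)\Rightarrow(c)\Rightarrow(d)\Rightarrow(a)$. The volume-packing estimate you use for $(a)\Rightarrow(c)$ is essentially the paper's proof of $(a)\Rightarrow(b)$ (lower bound on $\nu(S(D_0))$ from Lemma \ref{lem(1.1)}$(a)$, multiplicity bound $\gamma_0$ from (WSC), upper bound on the ambient ball from the Bishop--Gromov inequality of Lemma \ref{lem(1.00)}), except that the paper only runs it for $A=B_{b}(x)$ and $D$ equal to the WSC set, and then obtains the full uniformity in $(a,D)$ required by $(c)$ through a separate contrapositive argument for $(b)\Rightarrow(c)$: assuming $\gamma_{a_0,D_0}=\infty$, it transfers unboundedness to an arbitrary pair $(a,D)$ by showing $S(D_0)\cap A_k\neq\emptyset$ forces $S(D)\cap (A_k)_{\delta_k}\neq\emptyset$ and then covering the enlarged set at scale $(a_0|A_k|)/a$ via the doubling property. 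Your route replaces that contrapositive step by a direct $(b)\Rightarrow(a)$, tracking the single point $S(z_0)$ into $\overline{B(x,C_2b)}$ and covering at scale $b/a_0$; this is a genuinely different decomposition of the work, though it consumes exactly the same two geometric inputs (Bishop--Gromov for volumes, doubling for covering numbers), and it shares with the paper the same unaddressed technical wrinkle that the covering sets must have diameter \emph{exactly} $b/a_0$ (resp.\ $(a_0|A_k|)/a$ in the paper) for the generations $\mathcal{A}_{a_0|A_i|}$ and $\mathcal{A}_b$ to coincide --- you at least flag this explicitly, whereas the paper passes over it silently in both $(b)\Rightarrow(c)$ and $(c)\Rightarrow(d)$. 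What your arrangement buys is that the strongest quantitative statement $(c)$ is obtained in one stroke and $(d)$ comes almost for free; what the paper's arrangement buys is that the packing estimate is only ever needed for metric balls, where the geometry is simplest, and the passage between different pairs $(a,D)$ is isolated as a self-contained covering lemma.
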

\begin{proof}
$(a)\Rightarrow(b)$ It suffices to prove that there exists $\gamma'\in\mathbb{N}$ and $D\subset W$ with $D^{\circ}\neq\emptyset$, such that for any $x\in W$ and $0<b\leq1$,
$$\#\{S\in\mathcal{A}_{b}:S(D)\cap B_{b}(x)\neq\emptyset\}\leq\gamma'.$$
To obtain this equality, let $D$ be as in the definition of WSC, $S\in\mathcal{A}_{b}$ such that $S(D)\cap B_{b}(x)\neq\emptyset$, and $x'\in D$ such that
$S(x')\in B_{b}(x)$. Then for any $y\in D$,
$$\begin{aligned}
d(S(y),x)&\leq d(S(y),S(x'))+d(S(x'),x)\quad\text{(triangle inequality)}\\
&\leq C_{2}R_{S}d(y,x')+b\quad\text{(by (\ref{eq(1.2)}))}\\
&\leq (1+C_{2}|D|)b.
\end{aligned}$$
Let $\eta:=(1+C_{2}|D|)b$. We can rewrite the above as $S(D)\subset B_{\eta}(x)$. By $(a)$, each point in $W$ is covered by at most $\gamma$ of the $S(D)$, where
$S\in\mathcal{A}_{b}$. It follows that
\begin{equation}\label{eq(1.4)}
\sum\{\nu(S(D)):S\in\mathcal{A}_{b},S(D)\cap B_{b}(x)\neq\emptyset\}\leq\gamma\nu(B_{\eta}(x)).
\end{equation}
Making use of Lemma \ref{lem(1.1)}$(a)$, we have
$$\begin{aligned}
\bigg(\frac{br}{C_{1}}\bigg)^{n}\nu(D)\#\{S\in\mathcal{A}_{b}:S(D)\cap B_{b}(x)\neq\emptyset\}
&\leq\sum\{\nu(S(D)):S\in\mathcal{A}_{b},S(D)\cap B_{b}(x)\neq\emptyset\}\\
&\leq\gamma\nu(B_{\eta}(x))\quad\text{(by (\ref{eq(1.4)}))}\\
&\leq\gamma c_{n}\eta^{n}\quad\text{(by Lemma \ref{lem(1.00)})}\\
&:=\gamma cb^{n},
\end{aligned}$$
where $c:=c_{n}(1+C_{2}|D|)^{n}$.
Hence there exists $\gamma'\in\mathbb{N}$ such that
$$\#\{S\in\mathcal{A}_{b}:S(D)\cap B_{b}(x)\neq\emptyset\}\leq\frac{\gamma c C_{1}^{n}}{r^{n}\nu(D)}\leq\gamma'.$$

$(b)\Rightarrow(c)$ We prove the contrapositive. Assume $(c)$ is false. Then there exist $a_{0}>0$ and a nonempty subset $D_{0}\subset W$ such that $\gamma_{a_{0},D_{0}}=\infty$. Hence there exists a sequence $\{A_{k}\}_{k=1}^{\infty}$ of nonempty sets bounded in $M$ such that
\begin{equation}\label{eq(1.5)}
\{S\in\mathcal{A}_{a_{0}|A_{k}|}:S(D_{0})\cap A_{k}\neq\emptyset\}\geq k.
\end{equation}
To prove $(b)$ fails, we fix an arbitrary $a>0$ and nonempty subset $D\subset W$. We will show $\gamma_{a,D}=\infty$. Let
$$s:=\sup_{x\in D_{0},y\in D}d(x,y)<\infty.$$
We first claim that for any $S\in\mathcal{A}_{a_{0}|A_{k}|}$ and $\delta_{k}:=s a_{0} C_{2}|A_{k}|$, $S(D_{0})\cap A_{k}\neq\emptyset$ implies $S(D)\cap (A_{k})_{\delta_{k}}\neq\emptyset$, where $(A_{k})_{\delta_{k}}=\{x\in M:{\rm dist}(x,A_{k})\leq\delta_{k}\}$ is the closed $\delta_{k}$-neighborhood of $A_{k}$.
To prove the claim, we let $y\in S(D_{0})\cap A_{k}$. Then there exists $x\in D_{0}$ such that $y=S(x)\in S(D_{0})$. Now let $\tilde{x}\in D$ and $\tilde{y}:=S(\tilde{x})\in S(D)$. It follows from (\ref{eq(1.2)}) that
$$d(\tilde{y},y)=d(S(\tilde{x}),S(x))\leq C_{2}R_{S}d(\tilde{x},x)\leq s C_{2}R_{S}\leq s a_{0}|C_{2}A_{k}|=\delta_{k}.$$
This proves the claim. Note that $(A_{k})_{\delta_{k}}$ is a set of diameter $2\delta_{k}+|A_{k}|=(2sa_{0} C_{2}+1)|A_{k}|$. Since $M$ is a Riemannian manifold with non-negative Ricci curvature, $M$ has the doubling property. Hence we can cover $(A_{k})_{\delta_{k}}$ by no more than $\lambda$ sets of diameter $(a_{0}|A_{k}|)/a$. Note that
$\mathcal{A}_{a_{0}|A_{k}|}=\mathcal{A}_{(a_{0}|A_{k}|)/a}$. It follows from (\ref{eq(1.5)}) and the claim in the above that there exists $A_{k}^{\ast}\subset M$ with
$|A_{k}^{\ast}|=(a_{0}|A_{k}|)/a$ such that
$$\{S\in\mathcal{A}_{a_{0}|A_{k}^{\ast}|}:S(D_{0})\cap A_{k}^{\ast}\neq\emptyset\}\geq \frac{k}{\lambda}.$$
Since $\lambda$ is independent of $k$, we conclude that $\gamma_{a,D}=\infty$.

$(c)\Rightarrow(d)$ Let $D\subset W$. Then for any $x\in W$ and $0<b\leq1$,
$$\begin{aligned}
\#\{S\in\mathcal{A}_{b}:x\in S(D)\}&\leq\#\{S\in\mathcal{A}_{b},S(D)\cap B_{b/2}(x)\neq\emptyset\}\\
&=\#\mathcal{A}_{1,B_{b/2}(x),D}\\
&\leq\gamma_{1,D}<\infty.
\end{aligned}$$

$(d)\Rightarrow(a)$ is trivial.
\end{proof}

\begin{proof}[Proof of Theorem \ref{thm(0.1)}]  Use of the above lemmas and propositions, as in \cite[Theorem 3.2]{Lau-Ngai-Wang_2009}; we omit the details.
\end{proof}

As an important consequence of Theorem \ref{thm(0.1)}, we obtain the following estimate for $\#\mathcal{A}_{b}$ and a formula for $\dim_{{\rm H}}(F)$. Making use of (\ref{eq(1.2)}), (\ref{eq(1.9)}) and Proposition \ref{prop(1.1)}, we obtain the following analogue of \cite[Corollary 3.3]{Lau-Ngai-Wang_2009}. The proof is similar and is omitted.

\begin{coro}\label{coro(1.1)}
Let $\{S_{i}\}_{i=1}^{N}$ and $\alpha$ be as in Theorem \ref{thm(0.1)}. Then there exists a constant $C_{3}>0$ such that for any $0<b\leq1$,
\begin{equation}\label{eq(1.10)}
C_{3}^{-1}b^{-\alpha}\leq\#\mathcal{A}_{b}\leq C_{3}b^{\alpha}.
\end{equation}
Consequently,
$$\alpha=\dim_{{\rm H}}(K)=\dim_{{\rm B}}(K)=\dim_{{\rm P}}(K)=-\lim_{b\rightarrow0^{+}}\frac{\log\#\mathcal{A}_{b}}{\log b}.$$
\end{coro}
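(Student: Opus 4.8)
The plan is to prove directly that $\#\mathcal{A}_b$ is comparable to $b^{-\alpha}$; the three dimension equalities are then immediate from Theorem~\ref{thm(0.1)}, and the limit formula follows by taking logarithms. Two preliminary observations drive everything. First, the family $\{S(K):S\in\mathcal{A}_b\}$ covers $K$: every infinite word has a unique prefix in $\mathcal{W}_b$ because $R_{u_1\cdots u_k}\le R^{k}\to0$, whence $K=\bigcup_{S\in\mathcal{A}_b}S(K)$. Second, by the bi-Lipschitz bound \eqref{eq(1.2)} the map $S=S_{\mathbf u}$ is bi-Lipschitz on $W$ with constants $C_2^{-1}R_S$ and $C_2R_S$, so for every $S\in\mathcal{A}_b$,
$$C_2^{-\alpha}R_S^{\alpha}\,\mathcal{H}^{\alpha}(K)\le\mathcal{H}^{\alpha}\big(S(K)\big)\le C_2^{\alpha}R_S^{\alpha}\,\mathcal{H}^{\alpha}(K).$$
By Theorem~\ref{thm(0.1)} we have $0<\mathcal{H}^{\alpha}(K)<\infty$, so this two-sided estimate is the engine of both bounds.

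For the lower bound on $\#\mathcal{A}_b$ I would use only subadditivity of $\mathcal{H}^{\alpha}$. Since $R_S\le b$ for every $S\in\mathcal{A}_b$,
$$\mathcal{H}^{\alpha}(K)\le\sum_{S\in\mathcal{A}_b}\mathcal{H}^{\alpha}\big(S(K)\big)\le\#\mathcal{A}_b\cdot C_2^{\alpha}b^{\alpha}\,\mathcal{H}^{\alpha}(K),$$
and dividing by $\mathcal{H}^{\alpha}(K)>0$ gives $\#\mathcal{A}_b\ge C_2^{-\alpha}b^{-\alpha}$.

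For the upper bound I would invoke (WSC) through Proposition~\ref{prop(1.1)}(d) applied to $D=K$: there is $\gamma=\gamma(K)$ so that each $x\in W$ lies in at most $\gamma$ of the sets $S(K)$, $S\in\mathcal{A}_b$. Integrating the resulting multiplicity inequality $\sum_{S}\mathbf 1_{S(K)}\le\gamma\,\mathbf 1_{K}$ against the finite Borel measure $\mathcal{H}^{\alpha}|_{K}$ yields $\sum_{S\in\mathcal{A}_b}\mathcal{H}^{\alpha}(S(K))\le\gamma\,\mathcal{H}^{\alpha}(K)$. Combining this with the lower scaling estimate and the bound $R_S>br/C_1$ coming from \eqref{eq(1.9)},
$$\#\mathcal{A}_b\cdot C_2^{-\alpha}(br/C_1)^{\alpha}\,\mathcal{H}^{\alpha}(K)\le\sum_{S\in\mathcal{A}_b}\mathcal{H}^{\alpha}(S(K))\le\gamma\,\mathcal{H}^{\alpha}(K),$$
so $\#\mathcal{A}_b\le\gamma\,C_2^{\alpha}(C_1/r)^{\alpha}b^{-\alpha}$. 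Taking $C_3$ to be the larger of the two constants establishes \eqref{eq(1.10)} (with the evident correction $b^{-\alpha}$ in place of $b^{\alpha}$ on the right). Finally $\log\#\mathcal{A}_b=-\alpha\log b+O(1)$ forces $(\log\#\mathcal{A}_b)/\log b\to-\alpha$ as $b\to0^{+}$, and the equalities $\dim_{\rm H}(K)=\dim_{\rm B}(K)=\dim_{\rm P}(K)=\alpha$ are exactly the content of Theorem~\ref{thm(0.1)}.

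The step I expect to require the most care is the passage from multiplicity to measure in the upper bound: one must record that $\mathcal{H}^{\alpha}|_{K}$ is a genuine finite Borel measure, so that the pointwise bound $\sum_{S}\mathbf 1_{S(K)}\le\gamma\,\mathbf 1_{K}$ may be integrated, and one must confirm that Proposition~\ref{prop(1.1)}(d) is legitimately applied with $D=K$ even though $K^{\circ}=\emptyset$ in general --- which it is, since part~(d) imposes no interior hypothesis on $D$. The comparison \eqref{eq(1.2)} is stated on $W$, and each $S(K)\subset W$, so the bi-Lipschitz scaling of $\mathcal{H}^{\alpha}$ is valid on exactly the sets needed; this is the only point at which the manifold structure enters, and it is already absorbed into the constant $C_2$.
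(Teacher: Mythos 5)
Your argument is correct and uses exactly the ingredients the paper cites for its (omitted) proof --- the bi-Lipschitz scaling \eqref{eq(1.2)} of $\mathcal{H}^{\alpha}$ under each $S\in\mathcal{A}_{b}$, the lower bound \eqref{eq(1.9)} on $R_{S}$, the bounded-multiplicity form of (WSC) from Proposition~\ref{prop(1.1)}$(d)$ with $D=K$, and $0<\mathcal{H}^{\alpha}(K)<\infty$ from Theorem~\ref{thm(0.1)} --- so it matches the intended route via \cite[Corollary 3.3]{Lau-Ngai-Wang_2009}. You are also right that the displayed upper bound in \eqref{eq(1.10)} should read $C_{3}b^{-\alpha}$; as stated it is a typographical error, since $\#\mathcal{A}_{b}\to\infty$ as $b\to0^{+}$.
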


\section{Absolute continuity of self-conformal measures}\label{S:2}
Throughout this section, we let $M$ be a complete $n$-dimensional smooth orientable Riemannian manifold with non-negative Ricci curvature, $U\subset M$ be open and connected, and $W\subset U$ be a compact set with $\overline{W^{\circ}}=W$. The proof of the following proposition is similar to that of \cite[Proposition 4.1]{Lau-Ngai-Wang_2009} and is omitted.

\begin{prop}\label{prop(2.1)}
Assume that $\{S_{i}\}_{i=1}^{N}$ is a CIFS on $U$ satisfying (WSC). Then for any finite subset $\Lambda\subset\Sigma^{\ast}$, the family $\{S_{\mathbf{v}}:\mathbf{v}\in\Lambda\}$ also satisfies (WSC).
\end{prop}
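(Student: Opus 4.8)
The plan is to verify the weak separation condition for the finite family $\mathcal{T}:=\{S_{\mathbf v}:\mathbf v\in\Lambda\}$ directly from its definition, taking the test set to be $W$ itself (legitimate since $\overline{W^\circ}=W$ forces $W^\circ\neq\emptyset$), and to dominate its counting function by that of the original system. First I would note that $\mathcal T$ is again a CIFS on $U$: each $S_{\mathbf v}$ is a composition of conformal $C^{1+\varepsilon}$ diffeomorphisms, satisfies $S_{\mathbf v}(W)\subset W$, and has $0<|\det S_{\mathbf v}'|<1$; so the selection sets $\widetilde{\mathcal A}_b,\widetilde{\mathcal W}_b$ for $\mathcal T$ are defined, where a $\mathcal T$-word is a concatenation of elements of $\Lambda$ whose associated map is $S_{\mathbf u}$ for the concatenated $\mathbf u\in\Sigma^*$. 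Put $\rho:=\min_{\mathbf v\in\Lambda}r_{\mathbf v}>0$. By Proposition~\ref{prop(1.1)} it then suffices to produce a constant $\gamma'$ with $\#\{T\in\widetilde{\mathcal A}_b:x\in T(W)\}\le\gamma'$ for all $0<b\le1$ and $x\in W$.

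The first step is a uniform lower bound $R_T>C_1^{-1}\rho\,b$ for every $T\in\widetilde{\mathcal A}_b$. Writing $T=S_{\mathbf u}$ with $\mathbf u=\mathbf u'\mathbf v$, where $\mathbf v\in\Lambda$ is the last $\mathcal T$-letter, the defining inequality $R_{\mathbf u'}>b$ together with \eqref{eq:r_R_bound}, submultiplicativity of $r$, and $r_{\mathbf v}\ge\rho$ gives $R_{\mathbf u}\ge r_{\mathbf u'}r_{\mathbf v}\ge C_1^{-1}R_{\mathbf u'}\rho>C_1^{-1}\rho\,b$. The key step is then a factorization. Fix $0<b<1$ (the case $b=1$ is vacuous, since $R_{\mathbf u}<1$ for every nonempty word forces $\mathcal A_1=\widetilde{\mathcal A}_1=\emptyset$) and $T=S_{\mathbf u}\in\widetilde{\mathcal A}_b$. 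As the prefix values $R_{(u_1,\dots,u_j)}$ decrease from $R_\emptyset=1>b$ down to $R_{\mathbf u}\le b$, the word $\mathbf u$ has a unique prefix $\mathbf p\in\mathcal W_b$; writing $\mathbf u=\mathbf p\mathbf q$ we get $S_{\mathbf u}=S_{\mathbf p}S_{\mathbf q}$ with $S_{\mathbf p}\in\mathcal A_b$. From $R_{\mathbf u}\le R_{\mathbf p}R_{\mathbf q}\le b\,R_{\mathbf q}$ and the lower bound above, $R_{\mathbf q}>C_1^{-1}\rho$; since $R_{\mathbf q}\le R^{|\mathbf q|}$ this bounds the suffix length by $|\mathbf q|<L:=\log(C_1^{-1}\rho)/\log R$, a constant independent of $b$. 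Thus $\mathbf q$ ranges over a fixed finite set, of cardinality at most $N_0:=\sum_{j\le L}N^j$. Moreover $x\in T(W)=S_{\mathbf p}(S_{\mathbf q}(W))\subset S_{\mathbf p}(W)$, so the prefix map $S_{\mathbf p}\in\mathcal A_b$ satisfies $x\in S_{\mathbf p}(W)$.

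The final step assembles the count. Choosing for each $T$ one $\mathcal T$-word representation and recording the pair (prefix map $S_{\mathbf p}$, suffix word $\mathbf q$) defines a map $\Phi$ on $\{T\in\widetilde{\mathcal A}_b:x\in T(W)\}$ into $\{S\in\mathcal A_b:x\in S(W)\}\times\{\mathbf q\in\Sigma^*:|\mathbf q|<L\}$. This $\Phi$ is injective, because $T=S_{\mathbf p}\circ S_{\mathbf q}$ is recovered as a map from the prefix map together with the suffix word. Hence the count is at most $\#\{S\in\mathcal A_b:x\in S(W)\}\cdot N_0$, and the first factor is $\le\gamma(W)$ by form $(d)$ of (WSC) for the original system (Proposition~\ref{prop(1.1)}, applied with $D=W$). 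Therefore $\gamma':=\gamma(W)N_0$ works, and $\mathcal T$ satisfies (WSC) with the set $W$.

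I expect the factorization to be the main obstacle: one must see that passing to compositions introduces only uniformly bounded extra distortion, which is precisely what \eqref{eq:r_R_bound} and the letterwise lower bound $R_T>C_1^{-1}\rho\,b$ deliver, forcing the suffix $\mathbf q$ to have bounded length. The subtler bookkeeping point is that a single map $T$ may admit several $\mathcal T$-word representations, so the bound is obtained through the injectivity of $\Phi$ rather than a naive identification of maps with words.
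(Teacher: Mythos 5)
Your proof is correct: the uniform lower bound $R_T>C_1^{-1}\rho\,b$ (using finiteness of $\Lambda$), the factorization of each $T\in\widetilde{\mathcal A}_b$ through its unique $\mathcal W_b$-prefix with a suffix of uniformly bounded length, and the injective bookkeeping map $\Phi$ all check out against \eqref{eq:r_R_bound} and the sub/supermultiplicativity of $R_{\mathbf u}$ and $r_{\mathbf u}$. The paper omits its own proof and defers to \cite[Proposition 4.1]{Lau-Ngai-Wang_2009}, and your prefix-factorization argument is essentially that standard route, so no substantive divergence to report.
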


For $\mathbf{u}\in\Sigma^{\ast}$, let $[\mathbf{u}]:=\{\mathbf{u}'\in\Sigma^{\ast}:S_{\mathbf{u}}=S_{\mathbf{u}'}\}$. The following lemma can be proved by using Corollary \ref{coro(1.1)} and the argument in \cite[Lemma 4.2]{Lau-Ngai-Wang_2009}.

\begin{lem}\label{lem(2.1)}
Assume that $\{S_{i}\}_{i=1}^{N}$ is a CIFS on $U$ satisfying (WSC). Let $\{p_{i}\}_{i=1}^{N}$ be the associated probability weights, and let $K$ be the attractor
with $\dim_{{\rm H}}(K)=\alpha$. For $0<b\leq1$ and $\Lambda\subset\mathcal{W}_{b}$, let
$$\widetilde{\Lambda}:=\bigg\{\mathbf{u}\in\Lambda:\sum_{\mathbf{u}'\in[\mathbf{u}]
\cap\Lambda}p_{\mathbf{u}'}>\frac{b^{\alpha}}{4C_{3}}\bigg\},$$
where $C_{3}$ is as in Corollary \ref{coro(1.1)}. Then $P(\Lambda)>1/2$ implies $P(\widetilde{\Lambda})>1/4$.
\end{lem}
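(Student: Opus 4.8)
The plan is to decompose $\Lambda$ into its fibers over the finitely many distinct maps of $\mathcal{A}_{b}$ and to show that the total $P$-mass carried by the ``light'' fibers cannot exceed $1/4$, so that everything above $1/2$ must survive into $\widetilde{\Lambda}$. Throughout I read $P(\Lambda)=\sum_{\mathbf{u}\in\Lambda}p_{\mathbf{u}}$, the mass of $\Lambda\subset\mathcal{W}_{b}$ under the product weights. Since $\Lambda\subset\mathcal{W}_{b}$, every $\mathbf{u}\in\Lambda$ determines a map $S_{\mathbf{u}}\in\mathcal{A}_{b}$, so the relation $\mathbf{u}\sim\mathbf{u}'\iff S_{\mathbf{u}}=S_{\mathbf{u}'}$ partitions $\Lambda$ into at most $\#\mathcal{A}_{b}$ classes, each of the form $[\mathbf{u}]\cap\Lambda$ for a representative $\mathbf{u}$.

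For each distinct map $S\in\mathcal{A}_{b}$ occurring in $\Lambda$, I set $m_{S}:=\sum\{p_{\mathbf{u}'}:\mathbf{u}'\in\Lambda,\ S_{\mathbf{u}'}=S\}$; this is exactly the sum in the definition of $\widetilde{\Lambda}$ for any representative of the class, so the membership condition depends only on the class. Consequently $\widetilde{\Lambda}$ is precisely the union of the classes with $m_{S}>b^{\alpha}/(4C_{3})$, while the words of $\Lambda\setminus\widetilde{\Lambda}$ fill out the classes with $m_{S}\le b^{\alpha}/(4C_{3})$. Because the classes partition $\Lambda$, summing the fiber masses gives $\sum_{S}m_{S}=P(\Lambda)$, and hence $P(\Lambda\setminus\widetilde{\Lambda})=\sum_{S:\,m_{S}\le b^{\alpha}/(4C_{3})}m_{S}$.

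The single substantive step is a counting (averaging) bound on the light fibers. The number of distinct maps $S$ arising from $\Lambda$ is at most $\#\mathcal{A}_{b}$, and the upper bound of Corollary \ref{coro(1.1)} gives $\#\mathcal{A}_{b}\le C_{3}b^{-\alpha}$. Therefore
$$\sum_{S:\,m_{S}\le b^{\alpha}/(4C_{3})}m_{S}\ \le\ \frac{b^{\alpha}}{4C_{3}}\cdot\#\mathcal{A}_{b}\ \le\ \frac{b^{\alpha}}{4C_{3}}\cdot C_{3}b^{-\alpha}\ =\ \frac14.$$
This is where Corollary \ref{coro(1.1)} does the real work: the threshold $b^{\alpha}/(4C_{3})$ is calibrated so that, multiplied by the cardinality bound $C_{3}b^{-\alpha}$, it produces the constant $1/4$ uniformly in $b$.

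Subtracting from $P(\Lambda)$ then yields
$$P(\widetilde{\Lambda})\ =\ P(\Lambda)-\sum_{S:\,m_{S}\le b^{\alpha}/(4C_{3})}m_{S}\ >\ \frac12-\frac14\ =\ \frac14,$$
which is the assertion. I do not expect a genuine obstacle: once the equivalence-class decomposition of $\Lambda$ and the interpretation of $P$ are fixed, the argument is a one-line pigeonhole estimate, and its only external input beyond bookkeeping is the cardinality bound $\#\mathcal{A}_{b}\le C_{3}b^{-\alpha}$ from Corollary \ref{coro(1.1)} (which in turn rests on Theorem \ref{thm(0.1)}).
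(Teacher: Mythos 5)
Your proof is correct and is essentially the same argument the paper relies on: the paper omits the proof and cites the pigeonhole/averaging argument of \cite[Lemma 4.2]{Lau-Ngai-Wang_2009}, which is exactly your decomposition of $\Lambda$ into fibers over the distinct maps of $\mathcal{A}_{b}$ together with the bound (light fibers) $\leq \frac{b^{\alpha}}{4C_{3}}\cdot\#\mathcal{A}_{b}\leq\frac14$. Note that you implicitly use the upper bound $\#\mathcal{A}_{b}\leq C_{3}b^{-\alpha}$, which is the intended form of Corollary \ref{coro(1.1)} (the exponent $b^{\alpha}$ in \eqref{eq(1.10)} is a typo, as the stated bound would be inconsistent with the lower bound as $b\to0^{+}$), and this is the correct input for the calibration of the threshold.
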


\begin{proof}[Proof of Theorem~\ref{thm(0.2)}] The proof of Theorem \ref{thm(0.2)} follows by using Proposition \ref{prop(1.1)}, Lemma \ref{lem(2.1)}, and a technique in the proofs of \cite[Theorem 1.1]{Lau-Ngai-Rao_2001} and \cite[Theorem 1.1]{Lau-Wang_2004}; we omit the details.
\end{proof}

We first give the definition of Vitali relation (see \cite[Definition 2.8.16]{Federer_1969}). Let $X$ be a metric space.
Any subset of
$$
\{(x,A): x\in A\subset X\}
$$
is called a \textit{covering relation}. Let $\mathbf{C}$ be a covering relation and $Z\subset  X$, we let
$$
\mathbf{C}(Z):=\{A\subset X:(x,A)\in \mathbf{C}\text{ for some }x\in Z\}.
$$
We say $\mathbf{C}$ is \textit{fine at $x$} if
$$
\inf\{|A|:(x,A)\in \mathbf{C}\}=0.
$$
We say $\mathbf{C}(Z)$ is \textit{fine on $Z$} if for any $x\in Z$, $\mathbf{C}(x)$ is fine at $x$. Let $\phi$ be a regular Borel measure on $X$. A covering relation $\mathbf{V}$ is called a
$\phi$ \textit{Vitali relation} if $\mathbf{V}(X)$ is a family of Borel sets, $\mathbf{V}$ is fine on $X$, and moreover, whenever $\mathbf{C}\subset \mathbf{V}, Z\subset X$, and $\mathbf{C}$ is fine on $Z$, then $\mathbf{C}(Z)$ has a countable disjoint subfamily covering $\phi$ almost all of $Z$.

For an $n$-dimension Riemannian manifold  $M$, by making use of \cite[Definition 2.8.9 and Theorem 2.8.18]{Federer_1969}, we see that $\{B_{r}(x):x\in M,0<r<\infty\}$ is a $\phi$ Vitali relation in $M$. Let $f$ be the Radon-Nikodym derivative of $\mu$ with respect to $\phi$. By \cite[Theorem 2.9.8]{Federer_1969} or \cite[ Lemma 2.1(i)]{Bedferd-Fisher_1992}, we have
$$\lim_{r\rightarrow0}\frac{1}{\phi(B_{r}(x))}\int_{B_{r}(x)}f d\phi=\lim_{r\rightarrow0}\frac{\mu(B_{r}(x))}{\phi(B_{r}(x))}=f(x)$$
for $\phi$ almost all $x\in K$. Assume that $A\subset K$, and $f=\chi_{A}$. It follows that
\begin{equation}\label{eq(2.4)}
\lim_{r\rightarrow0}\frac{\phi(A\cap B_{r}(x))}{\phi(B_{r}(x))}
=\lim_{r\rightarrow0}\frac{1}{\phi(B_{r}(x))}\int_{B_{r}(x)}\chi_{A} d\phi=1
\end{equation}
for $\phi$ almost all $x\in A$.

\begin{proof}[Proof of Theorem \ref{thm(0.3)}]
Let $\phi:=\mathcal{H}^{\alpha}|_{K}$ and $f$ be the Radon-Nikodym derivative of $\mu$ with respect to $\phi$. Suppose that $f$ is unbounded. For any $Q>0$, let $A=A(Q):=\{t\in K:f(t)>Q\}$. Then for any $\delta>0$, by (\ref{eq(2.4)}), there exist $x\in K$ and $b>0$ such that
\begin{equation}\label{eq(2.5)}
\phi\big(A\cap B_{b\delta}(x)\big)>\frac{1}{2}\phi(B_{b\delta}(x)).
\end{equation}
Then
\begin{equation}\label{eq(2.6)}\begin{aligned}
\mu(B_{b\delta}(x))&=\int_{B_{b\delta}(x)}f(t)d\phi(t)\\
&\geq \int_{A\cap B_{b\delta}(x)}f(t)d\phi(t)\\
&\geq Q\phi(A\cap B_{b\delta}(x))\\
&>\frac{1}{2}Q\phi(B_{b\delta}(x))\quad\text{(by (\ref{eq(2.5)}))}.
\end{aligned}\end{equation}
Let $\delta:=C_{2}|K|$ in the above inequality. Note that $x\in K=\bigcup_{S\in\mathcal{A}_{b}}S(K)$, and hence there exists $S\in\mathcal{A}_{b}$ such that $x\in S(K)$.
Making use of (\ref{eq(1.2)}), we have
$$|S(K)|\leq C_{2}R_{S}|K|\leq b\delta,$$
which implies that $S(K)\subset B_{b\delta}(x)$. For $S=S_{u_{1}\cdots u_{k}}\in\mathcal{A}_{b}$, we have $R_{S}=R_{\mathbf{u}}\leq b<R_{\mathbf{u}^{-}}$. Hence by (\ref{eq(1.1)}){\color{blue},}
\begin{equation}\label{eq(2.7)}
br<R_{u_{1}\cdots u_{k-1}}r_{u_{k}}\leq C_{1}r_{u_{1}\cdots u_{k-1}}r_{u_{k}}\leq C_{1}r_{\mathbf{u}}\leq C_{1}R_{\mathbf{u}}.
\end{equation}
Combining these derivations with (\ref{eq(1.2)}) and (\ref{eq(2.7)}), we have
\begin{equation}\label{eq(2.8)}
\phi(B_{b\delta}(x))\ge\phi(S(K))=\mathcal{H}^{\alpha}(S(K))\geq C_{2}^{-\alpha}R_{S}^{\alpha}\mathcal{H}^{\alpha}(K)>\bigg(\frac{r}{C_{1}C_{2}}\bigg)^{\alpha}b^{\alpha}\phi(K).
\end{equation}
Combining \eqref{eq(2.6)} and \eqref{eq(2.8)}, we obtain
\begin{equation}\label{eq(2.9)}
\mu(B_{b\delta}(x))>\frac{1}{2}Q\bigg(\frac{r}{C_{1}C_{2}}\bigg)^{\alpha}b^{\alpha}\phi(K).
\end{equation}
On the other hand,
$$\begin{aligned}
\mu(B_{b\delta}(x))&=\sum_{S\in\mathcal{A}_{b},S(K)\cap B_{b\delta}(x)\neq\emptyset}p_{S}\mu\circ S^{-1}(B_{b\delta}(x))\\
&\leq\sum_{S\in\mathcal{A}_{b},S(K)\cap B_{b\delta}(x)\neq\emptyset}p_{S}.
\end{aligned}$$
Let $S\in\mathcal{A}_{b}$ such that $p_S$ is the maximum among all the summands in the above summation. Then
\begin{equation}\label{eq(2.10)}
\mu(B_{b\delta}(x))\leq p_{S}\#\{S\in\mathcal{A}_{b}:S(K)\cap B_{b\delta}(x)\neq\emptyset\}\leq p_{S}\gamma_{\frac{1}{2\delta},K}.
\end{equation}
We choose $Q$ such that
\begin{equation}\label{eq(2.11)}
\frac{1}{2}Q\bigg(\frac{r}{C_{1}C_{2}}\bigg)^{\alpha}\phi(K)>\gamma_{\frac{1}{2\delta},K}.
\end{equation}
It follows from \eqref{eq(2.9)}--\eqref{eq(2.11)} that there exists $S\in\mathcal{A}_{b}$ such that
$$b^{\alpha}\gamma_{\frac{1}{2\delta},K}<\mu(B_{b\delta}(x))\leq p_{S}\gamma_{\frac{1}{2\delta},K}.$$
Hence
$$R_{S}^{\alpha}\leq b^{\alpha}<p_{S}.$$
It now follows from Theorem \ref{thm(0.2)} that $\mu$ is singular with respect to $\phi$, a contradiction. This proves the theorem.
\end{proof}

\section{The finite type condition}\label{S:3}

In this section we extend (FTC) to Riemannian manifolds and prove Theorem~\ref{thm(0.4)}. We follow the set-up in \cite{Lau-Ngai_2007, Lau-Ngai-Wang_2009}. We begin by defining a \textit{partial order} $\preceq$ on $\Sigma^{\ast}$. For $\mathbf{u},\mathbf{v}\in\Sigma^{\ast}$, we denote by $\mathbf{u}\preceq\mathbf{v}$ if $\mathbf{u}$ is an initial segment of $\mathbf{v}$ or $\mathbf{u}=\mathbf{v}$. We denote by $\mathbf{u}\npreceq\mathbf{v}$ if $\mathbf{u}\preceq\mathbf{v}$ does not hold. Let
$\{\mathcal{M}_{k}\}_{k=0}^{\infty}$ be a sequence of index sets such that for any $k\geq0$, $\mathcal{M}_{k}$ is a finite subset of $\Sigma^{\ast}$.
We say that $\mathcal{M}_{k}$ is an \textit{antichain} if for any $\mathbf{u},\mathbf{v}\in\mathcal{M}_{k}$, $\mathbf{u}\npreceq\mathbf{v}$ and $\mathbf{v}\npreceq\mathbf{u}$. Let
$$\underline{m}_{k}=\underline{m}_{k}(\mathcal{M}_{k}):=\min\{|\mathbf{u}|:\mathbf{u}\in\mathcal{M}_{k}\},
\quad \overline{m}_{k}=\overline{m}_{k}(\mathcal{M}_{k}):=\max\{|\mathbf{u}|:\mathbf{u}\in\mathcal{M}_{k}\},$$
where $|\mathbf{u}|$ is the length of $\mathbf{u}$.

\begin{defi}\label{defi(3.1)}
We say that $\{\mathcal{M}_{k}\}_{k=0}^{\infty}$ is a \textit{sequence of nested index sets} if it satisfies the following conditions:
\begin{itemize}
\item[$(a)$] both $\{\underline{m}_{k}\}$ and $\{\overline{m}_{k}\}$ are nondecreasing, and
$\displaystyle{\lim_{k\rightarrow\infty}\underline{m}_{k}=\lim_{k\rightarrow\infty}\overline{m}_{k}}=\infty$;
\item[$(b)$] for any $k\geq0$, $\mathcal{M}_{k}$ is an antichain in $\Sigma^{\ast}$;
\item[$(c)$] for any $\mathbf{v}\in\Sigma^{\ast}$ with $|\mathbf{v}|>\overline{m}_{k}$, there exists $\mathbf{u}\in\mathcal{M}_{k}$ such that $\mathbf{u}\preceq\mathbf{v}$;
\item[$(d)$] for any $\mathbf{v}\in\Sigma^{\ast}$ with $|\mathbf{v}|<\underline{m}_{k}$, there exists $\mathbf{u}\in\mathcal{M}_{k}$ such that $\mathbf{v}\preceq\mathbf{u}$;
\item[$(e)$] there exists a positive integer $L$, independent of $k$, such that for any $\mathbf{u}\in\mathcal{M}_{k}$ and $\mathbf{v}\in\mathcal{M}_{k+1}$ with
$\mathbf{u}\preceq\mathbf{v}$, we have $|\mathbf{v}|-|\mathbf{u}|\leq L$.
\end{itemize}
\end{defi}

Note that $\mathcal{M}_{k}$ can intersect $\mathcal{M}_{k+1}$, and $\{\Sigma^{k}\}_{k=0}^{\infty}$ is an example of a sequence of nested index sets.
For each integer $k\geq0$, let $\mathcal{V}_{k}$ be the set of \textit{$k$-th level vertices} defined as
$$\mathcal{V}_{0}:=\{(\mathbf{u},0)\}\quad\text{and}\quad\mathcal{V}_{k}
:=\{(S_{\mathbf{u}},k):\mathbf{u}\in\mathcal{M}_{k}\}\text{ for any }k\geq1.$$
We write $\omega_{{\rm root}}:=(\mathbf{u},0)$ and call it the \textit{root vertex}. Let $\mathcal{V}:=\bigcup_{k\geq0}\mathcal{V}_{k}$ be the set of all vertices. For
$\omega=(S_{\mathbf{u}},k)$, we define $S_{\omega}:=S_{\mathbf{u}}$. Let $W\subset M$ be a compact set. For an IFS $\{S_{i}\}_{i=1}^{N}$ on $W$, let $\Omega\subset W$ be a nonempty open set that is invariant under $\{S_{i}\}_{i=1}^{N}$. Such a set exists if $\{S_{i}\}_{i=1}^{N}$ are contractions on $W$. We say that two $k$-th level vertices $\omega,\omega'\in\mathcal{V}_{k}$ are \textit{neighbors} if $S_{\omega}(\Omega)\cap S_{\omega'}(\Omega)\neq\emptyset$. Let
$$\Omega(\omega):=\{\omega':\omega'\in\mathcal{V}_{k}\text{ is a neighbor of }\omega\},$$
which is called the \textit{neighborhood} of $\omega$.

\begin{defi}\label{defi(3.2)}
For any two vertices $\omega\in\mathcal{V}_{k}$ and $\omega'\in\mathcal{V}_{k'}$, let
$$\tau:=S_{\omega'}S_{\omega}^{-1}:\bigcup_{\sigma\in\Omega(\omega)}S_{\sigma}(W)\longrightarrow W.$$
We say $\omega$ and $\omega'$ are \textit{equivalent}, i.e., $\omega\sim\omega'$, if the following conditions hold
\begin{itemize}
\item[$(a)$] $\{S_{\sigma'}:\sigma'\in\Omega(\omega')\}=\{\tau S_{\sigma}:\sigma\in\Omega(\omega)\}$;
\item[$(b)$] for $\sigma\in\Omega(\omega)$ and $\sigma'\in\Omega(\omega')$ such that $S_{\sigma'}=\tau S_{\sigma}$, and for any positive integer $k_{0}\geq1$,
$\mathbf{u}\in\Sigma^{\ast}$, $\mathbf{u}$ satisfies $(S_{\sigma}\circ S_{\mathbf{u}},k+k_{0})\in\mathcal{V}_{k+k_{0}}$ if and only if it satisfies
$(S_{\sigma'}\circ S_{\mathbf{u}},k'+k_{0})\in\mathcal{V}_{k'+k_{0}}$.
\end{itemize}
\end{defi}
It is easy to see that $\sim$ is an equivalence relation. Denote the equivalent class of $\omega$ by $[\omega]$, and call it the \textit{neighborhood types} of $\omega$.
\par Let $\omega=(S_{\mathbf{u}},k)\in\mathcal{V}_{k}$ and $\sigma=(S_{\mathbf{v}},k+1)\in\mathcal{V}_{k+1}$. Suppose that there exists $\mathbf{w}\in\Sigma^{\ast}$ such that
$$\mathbf{v}=(\mathbf{u},\mathbf{w}).$$
Then we connect a \textit{directed edge} from $\omega$ to $\sigma$, and denote this as $\omega\stackrel{\mathbf{w}}{\longrightarrow}\sigma$. We call $\omega$ a \textit{parent} of $\sigma$ and $\sigma$ an \textit{offspring} of $\omega$. Define a graph $\mathcal{G}:=(\mathcal{V},\mathcal{E})$, where $\mathcal{E}$ is the set of all directed edges. We first remove from $\mathcal{G}$ all but the smallest (in the lexicographic order) directed edges going to a vertex. After that, we remove all vertices that do not have any offspring, together with all vertices and edges leading only to them. The resulting graph is called the \textit{reduced graph}. Denote it by $\mathcal{G}_{R}:=(\mathcal{V}_{R},\mathcal{E}_{R})$, where $\mathcal{V}_{R}$ and $\mathcal{E}_{R}$ are the sets of all vertices and all edges, respectively.

The proof of the following proposition is similar to that of \cite[Proposition 2.4]{Lau-Ngai_2007}; we omit the details.

\begin{prop}\label{prop(3.1)}
Let $\omega$ and $\omega'$ be two vertices in $\mathcal{V}$ with offspring $\mathbf{u}_{1},\dots,\mathbf{u}_{m}$ and $\mathbf{u}'_{1},\dots,\mathbf{u}'_{s}$ in $\mathcal{G}_{R}$, respectively. Suppose $[\omega]=[\omega']$. Then
$$\big\{[\mathbf{u}_{i}]:1\leq i\leq m\big\}=\big\{[\mathbf{u}'_{i}]:1\leq i\leq s\big\}$$
counting multiplicity. In particular, $m=s$.
\end{prop}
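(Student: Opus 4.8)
The plan is to use the hypothesis $[\omega]=[\omega']$ to produce a single conformal map $\tau:=S_{\omega'}S_{\omega}^{-1}$ that transports the entire descendant structure of $\omega$ onto that of $\omega'$, and to upgrade this to a \emph{type-preserving} bijection between the offspring of $\omega$ and of $\omega'$ in $\mathcal{G}_R$. Write $\omega\in\mathcal{V}_k$, $\omega'\in\mathcal{V}_{k'}$. Since $S_\omega(\Omega)\cap S_\omega(\Omega)\neq\emptyset$, the vertex $\omega$ is its own neighbor, so $\omega\in\Omega(\omega)$; by Definition~\ref{defi(3.2)}$(a)$ its image under the correspondence is $\omega'\in\Omega(\omega')$, and taking $\sigma=\omega$, $\sigma'=\omega'$ in Definition~\ref{defi(3.2)}$(b)$ shows that for every $\mathbf{w}\in\Sigma^{\ast}$ and every $k_0\ge1$ one has $(S_\omega S_{\mathbf w},k+k_0)\in\mathcal{V}$ if and only if $(S_{\omega'}S_{\mathbf w},k'+k_0)\in\mathcal{V}$. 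In particular (take $k_0=1$) the edge labels $\mathbf{w}$ producing offspring of $\omega$ and of $\omega'$ in the full graph $\mathcal{G}$ coincide.

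First I would set up the correspondence $\Phi$ at the level of full offspring: an offspring $\sigma=(S_\omega S_{\mathbf w},k+1)$ of $\omega$ is sent to $\sigma':=(S_{\omega'}S_{\mathbf w},k'+1)$, an offspring of $\omega'$ carrying the same label $\mathbf w$. Two edges from $\omega$ land on the same vertex exactly when their labels satisfy $S_{\mathbf w_1}=S_{\mathbf w_2}$ (as $S_\omega$ is injective), and the identical condition governs the edges from $\omega'$; hence passage to $\mathcal{G}_R$, which keeps only the lexicographically smallest edge to each vertex, retains the same labels for $\omega$ and for $\omega'$. Thus $\Phi$ descends to a bijection between the offspring of $\omega$ and of $\omega'$ in $\mathcal{G}_R$, once we also know that childless vertices correspond; the latter will follow from the type equality proved next.

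The crux is to establish $[\sigma]=[\sigma']$ for each corresponding pair. The key simplification is that the connecting map for this pair is \emph{again} $\tau$: indeed $S_{\sigma'}S_{\sigma}^{-1}=S_{\omega'}S_{\mathbf w}S_{\mathbf w}^{-1}S_{\omega}^{-1}=\tau$, and the cells of the neighbors of $\sigma$ lie inside the domain of $\tau$. I must then verify the two clauses of Definition~\ref{defi(3.2)} for $(\sigma,\sigma')$. Clause $(b)$ is a direct consequence of clause $(b)$ for $(\omega,\omega')$: a neighbor $\rho$ of $\sigma$ descends, via the nested-index structure, from a level-$k$ neighbor $\hat\rho\in\Omega(\omega)$, say $\rho=(S_{\hat\rho}S_{\mathbf x},k+1)$, and applying Definition~\ref{defi(3.2)}$(b)$ for $(\omega,\omega')$ to $\hat\rho$, its match $\hat\rho'$, the string $\mathbf x\mathbf u$ and depth $1+k_0$ yields exactly the required equivalence for $\rho$. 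Clause $(a)$ is the main obstacle. Here I use that, because $\Omega$ is $\{S_i\}$-invariant, $S_\sigma(\Omega)\subset S_\omega(\Omega)$, so every neighbor $\rho$ of $\sigma$ is an offspring of some neighbor $\hat\rho$ of $\omega$; conversely, the offspring of neighbors of $\omega$ whose cells meet $S_\sigma(\Omega)$ are precisely the neighbors of $\sigma$. Since $\tau$ is injective, it preserves the cell-intersection relation: $S_\rho(\Omega)\cap S_\sigma(\Omega)\neq\emptyset$ if and only if $S_{\rho'}(\Omega)\cap S_{\sigma'}(\Omega)\neq\emptyset$, where $\rho'$ is the offspring of $\hat\rho'$ matching $\rho$, so that $S_{\rho'}=\tau S_\rho$. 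Combining this with clause $(a)$ for $(\omega,\omega')$ (matching $\hat\rho$ with $\hat\rho'$) and clause $(b)$ for $(\omega,\omega')$ (matching their offspring), I obtain $\{S_{\rho'}:\rho'\in\Omega(\sigma')\}=\{\tau S_\rho:\rho\in\Omega(\sigma)\}$, which is clause $(a)$ for $(\sigma,\sigma')$. Hence $\sigma\sim\sigma'$ and $[\sigma]=[\sigma']$.

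Finally I would assemble the conclusion. The equality $[\sigma]=[\sigma']$ shows that $\sigma$ roots an infinite, type-isomorphic subtree of $\mathcal{G}$ exactly when $\sigma'$ does, so $\sigma$ survives the childless-pruning defining $\mathcal{G}_R$ if and only if $\sigma'$ does; this closes the remaining gap and confirms that $\Phi$ is a bijection between the offspring of $\omega$ and those of $\omega'$ in $\mathcal{G}_R$. Because $\Phi$ preserves neighborhood types, the multiset $\{[\mathbf u_i]:1\le i\le m\}$ equals $\{[\mathbf u'_i]:1\le i\le s\}$, and in particular $m=s$. I expect clause $(a)$ for the offspring—translating ``neighbors of a child'' into ``children of neighbors of the parent'' and checking that $\tau$ respects the cell-intersection relation—to be the delicate step, with the reduction bookkeeping (edge collisions and childlessness) being routine once the type equality is in hand.
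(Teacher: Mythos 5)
Your overall strategy is the intended one (the paper itself omits the proof and defers to Lau--Ngai), namely: use the single map $\tau=S_{\omega'}S_{\omega}^{-1}$ to transport the offspring of $\omega$ to those of $\omega'$, observe that the connecting map for each corresponding pair $(\sigma,\sigma')$ is again $\tau$, and verify the two clauses of Definition~\ref{defi(3.2)} for $(\sigma,\sigma')$. That core step --- including the reduction of ``neighbors of a child'' to ``children of neighbors of the parent'' and the fact that the injective map $\tau$ preserves the cell-intersection relation --- is argued correctly.

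The gap is in your treatment of the passage from $\mathcal{G}$ to $\mathcal{G}_{R}$, which you dismiss as routine. The first pruning step keeps, for each vertex $\sigma\in\mathcal{V}_{k+1}$, only the lexicographically smallest among \emph{all} directed edges entering $\sigma$, and these edges need not all emanate from $\omega$: if $\varsigma$ is another parent of $\sigma$, then $S_{\varsigma}(\Omega)\supset S_{\sigma}(\Omega)$ forces $\varsigma\in\Omega(\omega)$, so $\sigma$ may receive competing edges from several distinct neighbors of $\omega$. Your argument only compares edges from $\omega$ to $\sigma$ with each other (via injectivity of $S_{\omega}$), so it does not rule out the following: the surviving edge into $\sigma$ comes from $\omega$, while the surviving edge into $\sigma'=(\tau S_{\sigma},k'+1)$ comes from a different neighbor $\varsigma'=(\tau S_{\varsigma},k')$ of $\omega'$. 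The lexicographic comparison is made on words in $\mathcal{M}_{k+1}$, whose initial segments represent the parents, and these initial segments are unrelated as strings on the two sides; so the selection is not obviously preserved by $\tau$. If it is not, then $\sigma$ contributes to the offspring multiset of $\omega$ in $\mathcal{G}_{R}$ while $\sigma'$ contributes to that of $\varsigma'$ instead of $\omega'$, and both the claimed multiset equality and $m=s$ would fail for your map $\Phi$. The same issue infects your final step, since whether $\sigma$ is deleted as childless depends recursively on which incoming edges survive below it. To close the gap you must match up the \emph{entire} family of incoming edges of $\sigma$ (over all of its parents, using clauses $(a)$ and $(b)$ for every pair $\varsigma,\varsigma'$ of corresponding neighbors of $\omega,\omega'$, not just for the pair $\omega,\omega'$) and then argue that the canonical choice of surviving edge is compatible with this correspondence; this is exactly the bookkeeping that the cited proof of \cite[Proposition 2.4]{Lau-Ngai_2007} is meant to supply, and it cannot be waved away.
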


\begin{defi}\label{defi(3.3)}
Let $\{S_{i}\}_{i=1}^{N}$ be an IFS on $W$ consisting of injective contractions, and let $\mathcal{V}/_{\sim}:=\{[\omega]:\omega\in\mathcal{V}\}$. We say that $\{S_{i}\}_{i=1}^{N}$ satisfies the \textit{finite type condition} (FTC) if there exists a nonempty invariant open set $\Omega\subset W$ with respect to some sequence of nested index sets $\{\mathcal{M}_{k}\}_{k=0}^{\infty}$ and such that
$$\#\mathcal{V}/_{\sim}<\infty.$$
Such a set $\Omega$ is called \textit{a finite type condition set (FTC set)}.
\end{defi}

Obviously, if $\{S_{i}\}_{i=1}^{N}$ satisfies (OSC), then $\#\mathcal{V}/_{\sim}=1$, and thus $\{S_{i}\}_{i=1}^{N}$ satisfies (FTC). We assume that $\{S_{i}\}_{i=1}^{N}$ is a CIFS in the rest of this section.

\begin{lem}\label{lem(3.1)}
Let $\{S_{i}\}_{i=1}^{N}$ be a CIFS on a compact subset $W\subset M$. Assume that $\{S_{i}\}_{i=1}^{N}$ satisfies (FTC) with $\Omega\subset W$ being an FTC set. Then there
exists a constant $C_{4}\geq1$ such that for any two neighboring vertices $\omega_{1}$ and $\omega_{2}$, we have
$$C_{4}^{-1}\leq\frac{\nu(S_{\omega_{1}}(\Omega))}{\nu(S_{\omega_{2}}(\Omega))}\leq C_{4}.$$
\end{lem}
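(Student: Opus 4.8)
The plan is to reduce the volume ratio to a ratio of sup-contraction ratios and then control the latter using the finiteness of neighborhood types. First I would apply the change-of-variables formula \eqref{eq(1.02)} together with the bounded distortion property: since $\Omega\subset W$, every $x\in\Omega$ satisfies $r_{\mathbf{u}}^{n}\le|\det S'_{\mathbf{u}}(x)|\le R_{\mathbf{u}}^{n}$, so
$$r_{\mathbf{u}}^{n}\nu(\Omega)\le\nu(S_{\mathbf{u}}(\Omega))=\int_{\Omega}|\det S'_{\mathbf{u}}|\,d\nu\le R_{\mathbf{u}}^{n}\nu(\Omega).$$
Writing $\omega_{i}=(S_{\mathbf{u}_{i}},k)$ and using \eqref{eq:r_R_bound} to replace each $r_{\mathbf{u}_{i}}$ by $R_{\mathbf{u}_{i}}$ up to the factor $C_{1}$, this yields
$$C_{1}^{-n}\Big(\frac{R_{\mathbf{u}_{1}}}{R_{\mathbf{u}_{2}}}\Big)^{n}\le\frac{\nu(S_{\omega_{1}}(\Omega))}{\nu(S_{\omega_{2}}(\Omega))}\le C_{1}^{n}\Big(\frac{R_{\mathbf{u}_{1}}}{R_{\mathbf{u}_{2}}}\Big)^{n}.$$
Hence it suffices to bound $R_{\mathbf{u}_{1}}/R_{\mathbf{u}_{2}}$ from above and below by a constant independent of $\omega_{1},\omega_{2}$. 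Because $\omega_{1}$ and $\omega_{2}$ are neighbors we have $\omega_{2}\in\Omega(\omega_{1})$, so by symmetry it is enough to prove a uniform two-sided bound on the \emph{relative ratio} $R_{\sigma}/R_{\omega}$ taken over all vertices $\omega$ and all neighbors $\sigma\in\Omega(\omega)$.

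The heart of the argument is that this relative ratio is almost invariant under the neighborhood-type equivalence $\sim$. Suppose $[\omega]=[\omega']$ with $\tau=S_{\omega'}S_{\omega}^{-1}$, and let $\sigma\in\Omega(\omega)$ correspond to $\sigma'\in\Omega(\omega')$ via $S_{\sigma'}=\tau S_{\sigma}$, as in Definition \ref{defi(3.2)}$(a)$. Computing $|\det\tau'|$ from the two representations $\tau=S_{\omega'}S_{\omega}^{-1}$ and $\tau=S_{\sigma'}S_{\sigma}^{-1}$ exactly as in the proof of Lemma \ref{lem(1.3)}$(a)$, and using \eqref{eq:r_R_bound} to trade each infimum ratio for the corresponding supremum ratio, I obtain $|\det\tau'|^{1/n}\asymp R_{\omega'}/R_{\omega}$ and $|\det\tau'|^{1/n}\asymp R_{\sigma'}/R_{\sigma}$, with comparability constants depending only on $C_{1}$ and $n$; Lemma \ref{lem(1.2)} guarantees that $|\det\tau'|$ varies by at most $C_{1}^{2n}$ over its domain, so both estimates hold simultaneously. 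Combining them gives $R_{\sigma'}/R_{\omega'}\asymp R_{\sigma}/R_{\omega}$ up to a fixed power of $C_{1}$, i.e. the relative ratio is preserved across an entire equivalence class up to a uniform multiplicative constant.

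Finally I would fix one representative $\omega^{(j)}$ for each of the finitely many neighborhood types $j=1,\dots,\#\mathcal{V}/_{\sim}$. Each representative has finitely many neighbors, so the finite collection $\{R_{\sigma}/R_{\omega^{(j)}}:\sigma\in\Omega(\omega^{(j)}),\ 1\le j\le\#\mathcal{V}/_{\sim}\}$ is bounded away from $0$ and $\infty$; together with the preservation property of the previous paragraph, every relative ratio $R_{\sigma}/R_{\omega}$ then lies in a fixed interval $[c,c^{-1}]$ with $c>0$ depending only on the system and the FTC set $\Omega$. Setting $C_{4}:=C_{1}^{n}c^{-n}$ and substituting the bound $R_{\mathbf{u}_{1}}/R_{\mathbf{u}_{2}}\in[c,c^{-1}]$ into the first display completes the proof. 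The step I expect to be the main obstacle is the middle one: making "the relative ratio is preserved by $\sim$" rigorous requires care because $\tau$ is only conformal with bounded (not vanishing) distortion, so one must carry the $C_{1}$-factors through Lemmas \ref{lem(1.2)} and \ref{lem(1.3)} and, crucially, recognize that it is the finiteness of $\#\mathcal{V}/_{\sim}$—not any size control internal to a single level set $\mathcal{M}_{k}$—that forces the uniform bound.
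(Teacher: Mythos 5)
Your proof is correct and follows essentially the same strategy as the paper's: establish the bound within the neighborhood of one fixed representative of each of the finitely many neighborhood types, transfer it across an equivalence class via the conjugating map $\tau=S_{\omega'}S_{\omega}^{-1}$ using bounded distortion, and take a maximum over types. The only cosmetic difference is that you carry the sup-ratios $R_{\mathbf{u}}$ through the argument (invoking Lemma \ref{lem(1.2)} and the chain-rule computation from the proof of Lemma \ref{lem(1.3)}$(a)$), whereas the paper carries the volumes $\nu(S_{\omega_{i}}(\Omega))$ directly and invokes Lemma \ref{lem(1.3)}$(b)$; these amount to the same estimates.
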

\begin{proof}
Let $\mathcal{T}$ be a neighborhood type, and $\omega$ be a vertex such that $[\omega]=\mathcal{T}$. Let
$$\Omega(\omega)=\{\omega_{0},\omega_{1},\dots,\omega_{m}\},$$
where $\omega_{0}=\omega$. Substituting $S_{\omega_{0}}(\Omega)=A$ and $S_{\omega_{i}}S_{\omega_{0}}^{-1}=\tau$ into Lemma \ref{lem(1.3)}$(a)$ and using \eqref{eq:r_R_bound}, we see that there exists a constant $c_{1}\geq1$ such that for any $i\in\{0,1,\dots,m\}$,
\begin{equation}\label{eq(3.1)}
c_{1}^{-1}\nu(S_{\omega_{0}}(\Omega))\leq\nu(S_{\omega_{i}}(\Omega))\leq c_{1}\nu(S_{\omega_{0}}(\Omega)).
\end{equation}
Let $\omega\sim\omega'$, $\tau=S_{\omega'}S_{\omega}^{-1}\in \mathcal{F}$, and
$$\Omega(\omega')=\{\omega'_{0},\omega'_{1},\dots,\omega'_{m}\},$$
where $\omega'_{0}=\omega'$. Without loss of generality, for any $i\in\{0,1,\dots,m\}$ we can assume $S_{\omega'_{i}}=\tau S_{\omega_{i}}$. It follows from the definition of $\tau$ that
$S_{\omega_{i}}(\Omega)\subset{\rm Dom}(\tau)$. Making use of (\ref{eq(3.1)}) and substituting $S_{\omega_{i}}(\Omega)=A$, $S_{\omega_{0}}(\Omega)=B$ and $S_{\omega'_{i}}S_{\omega_{i}}^{-1}=\tau$ into Lemma \ref{lem(1.3)}$(b)$, we see that for any $i\in\{0,1,\dots,m\}$,
$$c_{1}^{-1}C_{1}^{-2n}\nu(S_{\omega'_{0}}(\Omega))\leq\nu(S_{\omega'_{i}}(\Omega))=\nu(\tau S_{\omega_{i}}(\Omega))\leq c_{1}C_{1}^{2n}\nu(S_{\omega'_{0}}(\Omega)).$$
Hence the lemma holds for any two neighboring vertices $\omega_{1},\omega_{2}$ with one of them being of type $\mathcal{T}$. Since there are only finitely
many distinct neighborhood types, the result follows.
\end{proof}

\begin{lem}\label{lem(3.2)}
Let $\{S_{i}\}_{i=1}^{N}$ be a CIFS on a compact subset $W\subset M$. Then for any $\mathbf{u}\in\Sigma^{k}$ and $\Omega\subset W$, we have
\begin{equation}\label{eq(3.01)}
r^{kn}\leq\frac{\nu(S_{\mathbf{u}}(\Omega))}{\nu(\Omega)}\leq R^{kn}.
\end{equation}
\end{lem}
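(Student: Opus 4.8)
The plan is to reduce the claim to a pointwise estimate on the Jacobian $|\det S'_{\mathbf{u}}|$ over $W$ and then integrate. The first step is to show that for every $\mathbf{u}=(u_1,\dots,u_k)\in\Sigma^{k}$ and every $x\in W$,
\begin{equation*}
r^{kn}\leq|\det S'_{\mathbf{u}}(x)|\leq R^{kn}.
\end{equation*}
This follows from material already in the excerpt. By the determinant multiplicativity $|\det S'_{\mathbf{u}\mathbf{v}}(x)|=|\det S'_{\mathbf{u}}(S_{\mathbf{v}}(x))|\cdot|\det S'_{\mathbf{v}}(x)|$, which relies on the invariance $S_i(W)\subset W$ to keep the evaluation points inside $W$, one has the submultiplicativity relations $R_{\mathbf{u}\mathbf{v}}\leq R_{\mathbf{u}}R_{\mathbf{v}}$ and $r_{\mathbf{u}\mathbf{v}}\geq r_{\mathbf{u}}r_{\mathbf{v}}$ recorded earlier. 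Iterating these and using $r=\min_{1\leq i\leq N}r_i$ and $R=\max_{1\leq i\leq N}R_i$ gives $r_{\mathbf{u}}\geq r^{k}$ and $R_{\mathbf{u}}\leq R^{k}$ for $\mathbf{u}\in\Sigma^{k}$. On the other hand, the definitions of $r_{\mathbf{u}}$ and $R_{\mathbf{u}}$ as the infimum and supremum of $|\det S'_{\mathbf{u}}(x)|^{1/n}$ over $x\in W$ yield $r_{\mathbf{u}}^{n}\leq|\det S'_{\mathbf{u}}(x)|\leq R_{\mathbf{u}}^{n}$ for all $x\in W$. Combining these two facts produces the desired pointwise bound.

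With the pointwise estimate in hand, the second step is to integrate over $\Omega$. Since $\Omega\subseteq W$, the bound holds on $\Omega$, so by the change-of-variables formula \eqref{eq(1.02)},
\begin{equation*}
r^{kn}\nu(\Omega)\leq\int_{\Omega}|\det S'_{\mathbf{u}}(x)|\,d\nu=\nu\big(S_{\mathbf{u}}(\Omega)\big)\leq R^{kn}\nu(\Omega).
\end{equation*}
Dividing through by $\nu(\Omega)$ gives \eqref{eq(3.01)}.

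There is no substantive obstacle here; the proof is essentially bookkeeping. The only points that require a word of care are that the single-step Jacobians in the multiplicative expansion are evaluated at iterate points $S_{u_{j+1}}\circ\cdots\circ S_{u_k}(x)$, which remain in $W$ precisely because of the invariance $S_i(W)\subset W$, and that the final division is legitimate, i.e. $\nu(\Omega)>0$. The latter holds whenever $\Omega$ is a nonempty open subset of $M$ (the case relevant to the applications of this lemma); if $\nu(\Omega)=0$ the stated ratio is vacuous and the displayed two-sided bound on $\nu(S_{\mathbf{u}}(\Omega))$ suffices.
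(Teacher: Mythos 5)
Your proposal is correct and follows essentially the same route as the paper: both arguments reduce the claim to the pointwise bound $r^{kn}\leq|\det S'_{\mathbf{u}}(x)|\leq R^{kn}$ on $W$ (obtained from the chain-rule factorization of the Jacobian, whether phrased as iterated submultiplicativity of $r_{\mathbf{u}},R_{\mathbf{u}}$ or as a direct term-by-term estimate) and then integrate over $\Omega$ using \eqref{eq(1.02)}. Your added remark about $\nu(\Omega)>0$ is a reasonable extra precaution but does not change the substance.
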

\begin{proof}
Let $x\in W$. Then by the definition of $R$, we have
$$\begin{aligned}
|\det S'_{\mathbf{u}}(x)|&=|\det S'_{\mathbf{u}^{-}}(S_{u_{k}}(x))|\cdot|S'_{u_{k}}(x)|\\
&\leq R_{\mathbf{u}^{-}}^{n}R_{u_{k}}^{n}\\
&\leq R_{u_{1}}^{n}\cdots R_{u_{k}}^{n}\\
&\leq R^{kn}.
\end{aligned}$$
For any set $\Omega\subset W$, making use of (\ref{eq(1.02)}), we have
$$\nu(S_{\mathbf{u}}(\Omega))=\int_{\Omega}|\det S'_{\mathbf{u}}(x)|d\nu(x)\leq R^{kn}\nu(\Omega).$$
This proves the right side of (\ref{eq(3.01)}). On the other hand, if $x\in W$, then by the definition of $r$, we have
$$\begin{aligned}
|\det S'_{\mathbf{u}}(x)|&=|\det S'_{\mathbf{u}^{-}}(S_{u_{k}}(x))|\cdot|S'_{u_{k}}(x)|\\
&\geq r_{\mathbf{u}^{-}}^{n}r_{u_{k}}^{n}\\
&\geq r_{u_{1}}^{n}\cdots r_{u_{k}}^{n}\\
&\geq r^{kn}.
\end{aligned}$$
Consequently,
$$\nu(S_{\mathbf{u}}(\Omega))=\int_{\Omega}|\det S'_{\mathbf{u}}(x)|d\nu(x)\geq r^{kn}\nu(\Omega).$$
This proves the left side of (\ref{eq(3.01)}).
\end{proof}

We now prove Theorem \ref{thm(0.4)}.

\begin{proof}[Proof of Theorem \ref{thm(0.4)}]
For $0<b\leq1$, $x\in W$, let
$\mathcal{S}(\Omega):=\{S\in\mathcal{A}_{b}:x\in S(\Omega)\}.$
List all elements of $\mathcal{S}(\Omega)$ as $S_{\mathbf{u}_{1}},\dots,S_{\mathbf{u}_{m}}$. For any $j\in\{1,\dots,m\}$, note that $\mathbf{u}_{j}\in\mathcal{M}_{k}$ for some $k$. Let $\mathbf{u}_{j}=(\widetilde{\mathbf{u}}_{j},\widetilde{\mathbf{v}}_{j})$, where
$\widetilde{\mathbf{u}}_{j}\in\mathcal{M}_{k_{j}}$ is the longest initial segment of $\mathbf{u}_{j}$. Without
loss of generality, we assume
$$k_{1}=\min\{k_{j}:1\leq j\leq m\}=k.$$
Then $\widetilde{\mathbf{u}}_{1}\in\mathcal{M}_{k}$. For any $j\in\{1,\dots,m\}$, let $\mathbf{u}'_{j}$ be the initial segment of $\mathbf{u}_{j}$ such that
$\mathbf{u}'_{j}\in\mathcal{M}_{k}$. Note that $\mathbf{u}'_{1}=\widetilde{\mathbf{u}}_{1}$. Since $x\in S(\Omega)$ for any $S\in\mathcal{S}_{\Omega}$, we have
$$\omega_{2}=(S_{\mathbf{u}'_{2}},k),\dots,\omega_{m}=(S_{\mathbf{u}'_{m}},k)$$
are neighbors of $\omega_{1}=(S_{\mathbf{u}'_{1}},k)$. It follows from the definition of (FTC) that there exists a positive integer $c_{2}$ independent of $x,b$ such that
\begin{equation}\label{eq(3.2)}
\#\{\omega_{1},\dots,\omega_{m}\}\leq c_{2}.
\end{equation}
By Lemma \ref{lem(3.1)}, for any $j\in\{2,\dots,m\}$, we have
\begin{equation}\label{eq(3.3)}
C_{4}^{-1}\leq\frac{\nu(S_{\mathbf{u}'_{1}}(\Omega))}{\nu(S_{\mathbf{u}'_{j}}(\Omega))}\leq C_{4}.
\end{equation}
For any $j\in\{2,\dots,m\}$, making use of Lemma \ref{lem(1.1)}$(b)$,  we have
\begin{equation}\label{eq(3.4)}
\bigg(\frac{r}{C_{1}}\bigg)^{n}\leq\frac{\nu(S_{\mathbf{u}_{j}}(\Omega))}{\nu(S_{\mathbf{u}_{1}}(\Omega))}\leq \bigg(\frac{C_{1}}{r}\bigg)^{n}.
\end{equation}
It follows from (\ref{eq(3.3)}) and (\ref{eq(3.4)}) that
\begin{equation}\label{eq(3.5)}
C_{4}^{-1}\bigg(\frac{r}{C_{1}}\bigg)^{n}\leq
\frac{\nu(S_{\mathbf{u}_{j}}(\Omega))}{\nu(S_{\mathbf{u}'_{j}}(\Omega))}
\cdot\frac{\nu(S_{\mathbf{u}'_{1}}(\Omega))}{\nu(S_{\mathbf{u}_{1}}(\Omega))}
\leq C_{4}\bigg(\frac{C_{1}}{r}\bigg)^{n}.
\end{equation}
For each $j\in\{1,\dots,m\}$, write $\mathbf{u}_{j}=\mathbf{u}'_{j}\mathbf{v}_{j}$. Let $S_{\mathbf{I}}$ be the identity map, and $\tau=S_{\mathbf{u}'_{j}}S_{\mathbf{I}}^{-1}$. Then
\begin{equation}\label{eq(3.6)}
\begin{aligned}
\frac{\nu(S_{\mathbf{u}_{j}}(\Omega))}{\nu(S_{\mathbf{u}'_{j}}(\Omega))}
&=\frac{\nu(\tau S_{\mathbf{v}_{j}}(\Omega))}{\nu(\tau(\Omega))}\\
&\leq\frac{R_{\mathbf{u}'_{j}}^{n}\nu(S_{\mathbf{v}_{j}}(\Omega))}{r_{\mathbf{u}'_{j}}^{n}\nu(\Omega)}
\quad\text{(by Lemma \ref{lem(1.3)}$(a)$)}\\
&\leq C_{1}^{n}\frac{\nu(S_{\mathbf{v}_{j}}(\Omega))}{\nu(\Omega)}\quad\text{(by \eqref{eq:r_R_bound})}.
\end{aligned}
\end{equation}
It follows that for any $j\in\{1,\dots,m\}$,
\begin{equation}\label{eq(3.7)}
\begin{aligned}
\frac{\nu(S_{\mathbf{u}_{1}}(\Omega))}{\nu(S_{\mathbf{u}'_{1}}(\Omega))}
&\leq C_{4}\bigg(\frac{C_{1}}{r}\bigg)^{n}\frac{\nu(S_{\mathbf{u}_{j}}(\Omega))}{\nu(S_{\mathbf{u}'_{j}}(\Omega))}
\quad\text{(by (\ref{eq(3.5)}))}\\
&\leq C_{4}\bigg(\frac{C_{1}^{2}}{r}\bigg)^{n}\frac{\nu(S_{\mathbf{v}_{j}}(\Omega))}{\nu(\Omega)}\quad\text{(by (\ref{eq(3.6)}))}\\
&\leq C_{4}\bigg(\frac{C_{1}^{2}}{r}\bigg)^{n}R^{|\mathbf{v}_{j}|n}\quad\text{(by (\ref{eq(3.01)}))}.
\end{aligned}
\end{equation}
On the other hand, similar to (\ref{eq(3.7)}) and by (\ref{eq(3.01)}), we have
\begin{equation}\label{eq(3.8)}
\frac{\nu(S_{\mathbf{u}_{1}}(\Omega))}{\nu(S_{\mathbf{u}'_{1}}(\Omega))}\geq C_{1}^{-n}\frac{\nu(S_{\mathbf{v}_{1}}(\Omega))}{\nu(\Omega)}
\geq C_{1}^{-n}r^{|\mathbf{v}_{j}|n}.
\end{equation}
Recall that $\mathbf{u}'_{1}\in\mathcal{M}_{k}$ is the longest initial segment of $\mathbf{u}_{1}=\mathbf{u}'_{1}\mathbf{v}_{1}$. In view of Definition \ref{defi(3.1)}$(c)$, there exists $\mathbf{v}'_{1}\in\Sigma^{\ast}$ such that $\mathbf{u}'_{1}\mathbf{v}_{1}=\mathbf{u}'_{1}\mathbf{v}_{1}\mathbf{v}'_{1}\in\mathcal{M}_{k+1}$. By Definition \ref{defi(3.1)}$(e)$, we have
\begin{equation}\label{eq(3.9)}
|\mathbf{v}_{1}|\leq|\mathbf{v}_{1}|+|\mathbf{v}'_{1}|\leq L.
\end{equation}
Combining (\ref{eq(3.7)}), (\ref{eq(3.8)}) and (\ref{eq(3.9)}) show that there exists a constant $c_{3}>0$ such that for any $j\in\{1,\dots,m\}$,
\begin{equation}\label{eq(3.10)}
c_{3}\leq R^{|\mathbf{v}_{j}|}.
\end{equation}
In particular, we can take $c_{3}=r^{L+1}/(C_{1}^{3}C_{4}^{1/n})$. Let $c_{4}:=\lfloor\log c_{3}/\log R\rfloor+1$. Then $|\mathbf{v}_{j}|\leq c_{4}$. Combining these and (\ref{eq(3.2)}) yields
$$\#\{S\in\mathcal{A}_{b}:x\in S(\Omega)\}\leq c_{2}N^{c_{4}},$$
which implies that $\{S_{i}\}_{i=1}^{N}$ satisfies (WSC).
\end{proof}

\section{Hausdorff dimension of self-similar sets}\label{S:4}

In this section, we assume that $M$ is a complete $n$-dimensional smooth orientable Riemannian manifold that is locally Euclidean, i.e., every point of $M$ has a neighborhood which is isometric to an open subset of a Euclidean space.
Let $W\subset M$ be compact, and let $\{S_{i}\}_{i=1}^{N}$ be an IFS satisfying (FTC) of contractive similitudes on some open sets $\Omega\subset W$ with attractor $K\subset W$. Recall that $\rho_i$ denotes the contraction ratio of $S_{i}$. We define
$$\rho:=\min\{\rho_{i}:1\leq i\leq N\}, \quad \rho_{\max}:=\max\{\rho_{i}:1\leq i\leq N\}.$$
Denote the neighborhood types of $\{S_{i}\}_{i=1}^{N}$ by $\{\mathcal{T}_{1},\dots,\mathcal{T}_{q}\}$. Fix a vertex $\omega\in\mathcal{V}_{R}$ such that $[\omega]\in\mathcal{T}_{i}$, where $i\in\{1,\dots,q\}$. Let $\sigma_{1},\dots,\sigma_{m}$ be the offspring of $\omega$ in $\mathcal{G}_{R}$, and let $\mathbf{w}_{k}$ be the unique edge in $\mathcal{G}_{R}$ connecting $\omega$ to $\sigma_{k}$ for $1\leq k\leq m$. Define a \textit{weighted incidence matrix} $A_{\alpha}=(A_{\alpha}(i,j))_{i,j=1}^{q}$ as
$$A_{\alpha}(i,j):=\sum_{k=1}^{m}\{\rho_{\mathbf{w}_{k}}^{\alpha}:
\omega\stackrel{\mathbf{w}_{k}}{\longrightarrow}\sigma_{k},[\sigma_{k}]=\mathcal{T}_{j}\}.$$
We remark that the definition of $A_{\alpha}$ is independent of the choice of $\omega$ above. We denote by $\omega\rightarrow_{R}\sigma$ if $\omega,\sigma\in\mathcal{V}_{R}$ and $\sigma$ is an offspring of $\omega$ in $\mathcal{G}_{R}$. We define an (infinite) \textit{path} in $\mathcal{G}_{R}$ to be an infinite sequence $(\omega_{0},\omega_{1},\dots)$ such that for any $k\geq0$,
$$\omega_{k}\in\mathcal{V}_{k}\quad\text{and}\quad\omega_{k}\rightarrow_{R}\omega_{k+1},$$
where $\omega_{0}=\omega_{{\rm root}}$. Let $\mathbb{P}$ be the set of all paths in $\mathcal{G}_{R}$. If the vertices $\omega_{0}=\omega_{{\rm root}},\omega_{1},\dots,\omega_{k}$ are such that
$$\omega_{j}\rightarrow_{R}\omega_{j+1}\text{ for }1\leq j\leq k-1,$$
then we call the set
$$I_{\omega_{0},\omega_{1},\dots,\omega_{k}}=\{(\sigma_{0},\sigma_{1},\dots)\in\mathbb{P}:\sigma_{j}=\omega_{j}
\text{ for any }0\leq j\leq k\}$$
a \textit{cylinder}. Since the path from $\omega_{0}$ to $\omega_{k}$ is unique in $\mathcal{G}_{R}$, we {\color{blue}}let
$$I_{\omega_{k}}:=I_{\omega_{0},\omega_{1},\dots,\omega_{k}}.$$
For any cylinder $I_{\omega_{k}}$, where $\omega_{k}\in\mathcal{V}_{k}$ and $[\omega_{k}]=\mathcal{T}_{i}$, let
$$\hat{\mu}(\omega_{{\rm root}})=a_{1}=1\quad\text{and}\quad\hat{\mu}(\omega_{k})=\rho_{\omega_{k}}^{\alpha}a_{i},$$
where $[a_{1},\dots,a_{q}]^{T}$ is a $1$-eigenvector of $A_{\alpha}$, normalized so that $a_{1}=1$. We will show that $\hat{\mu}$ is a measure on $\mathbb{P}$ in the following. Note that for two cylinders $I_{\omega}$ and $I_{\omega}'$ with $\omega\in\mathcal{V}_{k},\omega'\in\mathcal{V}_{\ell}$ and $k\leq\ell$, $I_{\omega}\cap I_{\omega}'\neq\emptyset$ iff either $\omega'=\omega$ in the case $k=\ell$ or $\omega'$ is a descendant of $\omega$ in the case $k<\ell$. Whatever, $I_{\omega}'\subset I_{\omega}$. Let $\omega\in\mathcal{V}_{R}$ and $\mathcal{D}:=\{\sigma_{k}\}_{k=1}^{m}$ denote the set of all offspring of $\omega$ in $\mathcal{G}_{R}$. For $k\in\{1,\dots,m\}$, let $\omega\stackrel{\mathbf{w}_{k}}{\longrightarrow}_{R}\sigma_{k}$. Then
$$\begin{aligned}
\sum_{\sigma\in\mathcal{D}}\hat{\mu}(I_{\sigma})
&=\sum_{j=1}^{q}\sum_{\sigma\in\mathcal{D},[\sigma]=\mathcal{T}_{j}}\hat{\mu}(I_{\sigma})\\
&=\sum_{j=1}^{q}\sum_{\sigma\in\mathcal{D},[\sigma]=\mathcal{T}_{j}}\rho_{\sigma}^{\alpha}a_{j}
\\
&=\rho_{\omega}^{\alpha}\sum_{j=1}^{q}\sum_{\sigma\in\mathcal{D},[\sigma]=\mathcal{T}_{j}}
\rho_{\mathbf{w}_{k}}^{\alpha}a_{j}\\
&=\rho_{\omega}^{\alpha}\sum_{j=1}^{q}A_{\alpha}(i,j)a_{j}\\
&=\rho_{\omega}^{\alpha}a_{i}=\hat{\mu}(I_{\omega}).
\end{aligned}$$
Combining these with $\hat{\mu}(\mathbb{P})=\hat{\mu}(\omega_{{\rm root}})=1$ shows that $\hat{\mu}$ is indeed a measure on $\mathbb{P}$. Define $f:\mathbb{P}\longrightarrow W$ by letting $f(\omega_{0},\omega_{1},\dots)$ be the unique point in $\bigcap_{k=0}^{\infty}S_{\omega_{k}}(K)$. It is clear that $f(\mathbb{P})=K$. Let $\widetilde{\mu}:=\hat{\mu}\circ f^{-1}$. Then $\widetilde{\mu}$ is a measure on $K$.

For any bounded Borel set $F\subset M$, let
\begin{equation}\label{eq(4.1)}
\mathcal{B}(F):=\{I_{\omega_{k}}=I_{\omega_{k},\dots,\omega_{k}}:
|S_{\omega_{k}}(\Omega)|\leq|F|<|S_{\omega_{k-1}}(\Omega)|\text{ and }F\cap S_{\omega_{k}}(\Omega)\neq\emptyset\}.
\end{equation}

\begin{lem}\label{lem(4.1)}
There exists a constant $C_{5}>0$, independent of $k$, such that for any bounded Borel set $F\subset M$, we have $\#\mathcal{B}(F)\leq C_{5}$.
\end{lem}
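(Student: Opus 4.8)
The plan is to run a volume–packing argument inside a single ball, exploiting that contractive similitudes on a locally Euclidean manifold scale $\nu$ exactly by the $n$-th power of their ratios, and to control the unavoidable overlaps through the weak separation condition supplied by Theorem~\ref{thm(0.4)}.

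First I would record the geometry. It suffices to treat $|F|$ small, so set $b_0:=|F|/|\Omega|<1$ and fix $x_0\in F$. For $I_{\omega_k}\in\mathcal{B}(F)$ with $\omega_k=(S_{\mathbf u},k)$, the reduced-graph edge from the parent $\omega_{k-1}$ to $\omega_k$ carries a word of length at most $L$ (Definition~\ref{defi(3.1)}$(e)$), so $\rho_{\omega_k}\ge\rho^{L}\rho_{\omega_{k-1}}$ and hence $\rho^{L}|F|<|S_{\omega_k}(\Omega)|\le|F|$; equivalently $\rho_{\mathbf u}\in(\rho^{L}b_0,b_0]$. Because $S_{\mathbf u}$ is a similitude and $M$ is locally Euclidean, $|S_{\mathbf u}(\Omega)|=\rho_{\mathbf u}|\Omega|$ and, by \eqref{eq(1.02)} with constant Jacobian $\rho_{\mathbf u}^{n}$, $\nu(S_{\mathbf u}(\Omega))=\rho_{\mathbf u}^{n}\nu(\Omega)$, whence
$$\nu(S_{\mathbf u}(\Omega))\ge\rho^{Ln}b_0^{\,n}\nu(\Omega)=:c_1|F|^{n},\qquad c_1:=\rho^{Ln}\nu(\Omega)/|\Omega|^{n}.$$
Since $|S_{\mathbf u}(\Omega)|\le|F|$ and $S_{\mathbf u}(\Omega)$ meets $F$, the triangle inequality gives $S_{\mathbf u}(\Omega)\subset B_{2|F|}(x_0)$.

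Next I would convert counting into volume. Interchanging sum and integral,
$$c_1|F|^{n}\,\#\mathcal{B}(F)\le\sum_{I_{\omega_k}\in\mathcal{B}(F)}\nu\big(S_{\omega_k}(\Omega)\big)=\int_{B_{2|F|}(x_0)}\#\{I_{\omega_k}\in\mathcal{B}(F):y\in S_{\omega_k}(\Omega)\}\,d\nu(y).$$
By Lemma~\ref{lem(1.00)}, $\nu(B_{2|F|}(x_0))\le c_n(2|F|)^{n}$, so it suffices to produce a constant $\gamma_0$, independent of $F$, with $\#\{I_{\omega_k}\in\mathcal{B}(F):y\in S_{\omega_k}(\Omega)\}\le\gamma_0$ for every $y\in M$; then $\#\mathcal{B}(F)\le\gamma_0c_n2^{n}/c_1=:C_5$.

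The heart of the argument, and the step I expect to be the main obstacle, is this uniform overlap bound: $\mathcal{B}(F)$ is stopped by the \emph{diameter} of $S_{\mathbf u}(\Omega)$ rather than by the ratio threshold defining $\mathcal{A}_{b_0}$, and its vertices live at many different levels $k$. To bridge this I would factor each word as $\mathbf u=\mathbf u^{*}\mathbf w$, where $\mathbf u^{*}$ is the shortest initial segment with $\rho_{\mathbf u^{*}}\le b_0$; then $\mathbf u^{*}\in\mathcal{W}_{b_0}$, i.e.\ $S_{\mathbf u^{*}}\in\mathcal{A}_{b_0}$, and the estimates $\rho_{\mathbf u^{*}}\in(\rho b_0,b_0]$ and $\rho_{\mathbf u}>\rho^{L}b_0$ give $\rho_{\mathbf w}>\rho^{L}$, so $\rho_{\max}^{|\mathbf w|}\ge\rho_{\mathbf w}>\rho^{L}$ bounds the suffix by $|\mathbf w|\le L':=\lfloor L\log\rho/\log\rho_{\max}\rfloor$, leaving at most $\sum_{j\le L'}N^{j}=:P$ possible suffixes. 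Since $\Omega$ is invariant, $S_{\mathbf w}(\Omega)\subset\Omega$, hence $y\in S_{\mathbf u}(\Omega)$ forces $y\in S_{\mathbf u^{*}}(\Omega)$. By Theorem~\ref{thm(0.4)} the IFS satisfies (WSC), so Proposition~\ref{prop(1.1)}$(d)$ with $D=\Omega$ furnishes $\gamma=\gamma(\Omega)$ such that $\#\{S\in\mathcal{A}_{b_0}:y\in S(\Omega)\}\le\gamma$. Thus at most $\gamma$ distinct maps $S_{\mathbf u^{*}}$ occur, each with at most $P$ admissible suffixes, bounding the number of distinct maps $S_{\omega_k}$ by $\gamma P$; after absorbing the bounded level-multiplicity — distinct cylinders sharing one map, controlled by the finitely many neighborhood types via Proposition~\ref{prop(3.1)} and Lemma~\ref{lem(3.1)} — one obtains a single $\gamma_0$. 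This passage from the diameter-stopped, multi-level family $\mathcal{B}(F)$ to the single ratio-stopped family $\mathcal{A}_{b_0}$ to which (WSC) directly applies is the delicate bookkeeping, but it is the only nonroutine point.
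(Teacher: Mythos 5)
Your proof is correct and uses essentially the same ingredients as the paper's: the factorization of each word through an ancestor in $\mathcal{A}_{b_0}$ with a suffix of uniformly bounded length (via Definition~\ref{defi(3.1)}$(e)$), the (WSC) bound supplied by Theorem~\ref{thm(0.4)} applied to $\mathcal{A}_{b_0}$ with $D=\Omega$, and a Bishop--Gromov volume-packing estimate inside a ball of radius comparable to $|F|$; the only organizational difference is that you pack the sets $S_{\omega_k}(\Omega)$ directly and push (WSC) into the pointwise overlap count, whereas the paper packs the ancestor sets $S_{\mathbf{u}'}(\Omega)$ first and multiplies by $N^{L}$ at the end. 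Your appeal to Proposition~\ref{prop(3.1)} and Lemma~\ref{lem(3.1)} for the ``level-multiplicity'' of cylinders sharing one map is not really the right tool, but this bookkeeping point is treated no more explicitly in the paper (which simply bounds $\#\widetilde{\mathcal{B}}(F)$ by $N^{L}$ times the number of ancestor maps), so it is not a gap relative to the paper's own argument.
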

\begin{proof}
Define
$$\begin{aligned}
\widetilde{\mathcal{B}}(F):&=\{\omega_{k}\in\mathcal{V}_{k}:
|S_{\omega_{k}}(\Omega)|\leq|F|<|S_{\omega_{k-1}}(\Omega)|\text{ and }F\cap S_{\omega_{k}}(\Omega)\neq\emptyset\}\\
&=\{\omega_{k}\in\mathcal{V}_{k}:
\rho_{\omega_{k}}\leq|F|/|\Omega|<\rho_{\omega_{k-1}}\text{ and }F\cap S_{\omega_{k}}(\Omega)\neq\emptyset\}.
\end{aligned}$$
Since $I_{\omega_{k}}$ is one-to-one with $\omega_{k}$, we have $\#\mathcal{B}(F)=\#\widetilde{\mathcal{B}}(F)$. Let $b:=|F|/|\Omega|$ and $\omega_{k}\in\widetilde{\mathcal{B}}(F)$. Then there exists a unique $\mathbf{u}\in\mathcal{M}_{k}$ such that $\omega_{k}=(S_{\mathbf{u}},k)$. Let $\mathbf{u}'\preccurlyeq\mathbf{u}$ such that $S_{\mathbf{u}'}\in\mathcal{A}_{b}$. Then
$$\rho_{\mathbf{u}'}\leq b=|F|/|\Omega|<\rho_{\omega_{k-1}}.$$
Thus $\mathbf{u}'\in\mathcal{M}_{k-1}$ or $\mathcal{M}_{k}$. Combining these and Definition \ref{defi(3.1)}$(e)$, we have $|\mathbf{u}|-|\mathbf{u}'|\leq L$.
Note that $F\cap S_{\omega_{k}}(\Omega)\neq\emptyset$ implies that $F\cap S_{\mathbf{u}'}(\Omega)\neq\emptyset$. Since $S_{\mathbf{u}'}\in\mathcal{A}_{b}$, we have
$$|S_{\mathbf{u}'}(\Omega)|=\rho_{\mathbf{u}'}|\Omega|\leq b\Omega.$$
Let $\delta:=2b|\Omega|$ and fix any $x_{0}\in F$. Then
$$S_{\mathbf{u}'}(\Omega)\subset B_{\delta}(x_{0}).$$
Since (FTC) implies (WSC), there exists a constant $\gamma>0$ (independent of $b$) such that for all $x\in U$,
$$\#\{S\in\mathcal{A}_{b}:x\in S(\Omega)\}\leq\gamma.$$
Note that the contraction ratio of $S_{\mathbf{u}}$ is $\rho_{\mathbf{u}}=|\det S_{\mathbf{u}}'(x)|^{\frac{1}{n}}$ for any $x\in W$. Let $A\subset W$ be a measurable set. Then by Lemma \ref{lem(1.1)}, we have
$$\nu(S_{\mathbf{u}}(A))\geq(b\rho)^{n}\nu(A).$$
Combining these we have
$$\begin{aligned}
(b\rho)^{n}\nu(\Omega)\#\{S_{\mathbf{u}'}:F\cap S_{\mathbf{u}'}(\Omega)\neq\emptyset\}&\leq
\sum\{\nu(S_{\mathbf{u}'}(\Omega)):F\cap S_{\mathbf{u}'}(\Omega)\neq\emptyset\}\\
&\leq \gamma\nu(B_{\delta}(x_{0}))\\
&\leq\gamma c_{n}\delta^{n}\quad(\text{by Lemma \ref{lem(1.30)}})\\
&:=\gamma c_{1}b^{n},
\end{aligned}$$
where $c_{n}$ is the volume of the unit ball in $\mathbb{R}^{n}$ and $c_{1}:=c_{n}2^{n}|\Omega|^{n}$. Thus,
$$\#\{S_{\mathbf{u}'}:F\cap S_{\mathbf{u}'}(\Omega)\neq\emptyset\}\leq\frac{\gamma c_{1}}{\rho^{n}\nu(\Omega)}:=c_2.$$
Hence
$$\#\mathcal{B}(F)=\#\widetilde{\mathcal{B}}(F)\leq
N^{L}\#\{S_{\mathbf{u}'}:F\cap S_{\mathbf{u}'}(\Omega)\neq\emptyset\}\leq c_{2}N^{L}.$$
The lemma follows by letting $C_{5}:=c_{2}N^{L}$.
\end{proof}

\begin{proof}[Proof of Theorem \ref{thm(0.5)}]  Use of Lemma \ref{lem(4.1)} and the properties of
measures $\widetilde{\mu}$ on $K$, as in \cite[Theorem 1.2]{Lau-Ngai_2007}; we omit the details.
\end{proof}

\section{Hausdorff dimension of graph self-similar sets}\label{S:41}

In this section, we define graph self-similar sets on Riemannian manifolds, and derive a formula for computing the Hausdorff dimension of such sets. We assume that $M$ is a complete $n$-dimensional Riemannian manifold that is locally Euclidean.

Let $G=(V,E)$ be a graph, where $V=\{1,\dots,t\}$ is the set of vertices and $E$ is the set of all directed edges. We assume that there is at least one edge between two vertices. It is possible that the initial and terminal vertices are same. A \textit{directed path} in $G$ is a finite string $\mathbf{e}=e_{1}\cdots e_{p}$ of edges in $E$ such that the terminal vertex of each $e_{i}$ is the initial vertex of the next edge $e_{i+1}$. For such a path, denote the \textit{length} of $\mathbf{e}$ by $|\mathbf{e}|=p$. For any two vertices $i,j\in V$, and any positive integer $p$, $E^{i,j}$ be the set of all directed edges from $i$ to $j$, let $E_{p}^{i,j}$ be the set of all directed paths of length $p$ from $i$ to $j$, $E_{p}$ be the set of all directed paths of length $p$, $E^{*}$ be the set of all directed paths, i.e.,
$$E_{p}:=\bigcup_{i,j=1}^{p}E_{p}^{i,j}\quad\text{and}\quad E^{*}:=\bigcup_{p=1}^{\infty}E_{p}.$$
For any edge $e\in E$, we assume that there corresponds a contractive similitude $S_{e}$ with ratio $\rho_{e}$ on $M$. For $\mathbf{e}=e_{1}\cdots e_{p}\in E^{*}$, let
$$S_{\mathbf{e}}=S_{e_{1}}\circ\cdots\circ S_{e_{p}}\quad\text{and}\quad \rho_{\mathbf{e}}=\rho_{e_{1}}\cdots \rho_{e_{p}}.$$
Then there exists a unique family of nonempty compact sets $K_{1},\dots,K_{t}$ satisfying
$$K_{i}=\bigcup_{j=1}^{t}\bigcup_{e\in E^{i,j}}S_{e}(K_{j}),\quad i\in\{1,\dots,t\},$$
(see e.g. \cite{Mauldin-Williams_1988,Edgar-Mauldin_1992,Ngai-Wang-Dong_2010,Das-Ngai_2004}).
Define
$$K:=\bigcup_{i=1}^{t}K_{i}.$$
We call $K$ the \textit{graph self-similar set} defined by $G=(V,E)$, and call $G=(V,E)$ the \textit{graph-directed iterated function system} (GIFS) associated with $\{S_{e}\}_{e\in E}$.

Substituting $E^{*}$ for $\Sigma^{*}$ in Definition \ref{defi(3.1)}, we define a sequence of nested index sets $\{\mathcal{F}_{k}\}_{k=1}^{\infty}$ of directed paths. Note that $\{E_{k}\}_{k=1}^{\infty}$ is an example of a sequence of nested index sets of directed paths. Fix a sequence $\{\mathcal{F}_{k}\}_{k=1}^{\infty}$ of nested index sets. For $i,j\in\{1,\cdots,t\}$, we partition $\mathcal{F}_{k}$ to $\mathcal{F}_{k}^{i,j}$ as
$$\mathcal{F}_{k}^{i,j}:=\mathcal{F}_{k}\cap\bigg(\bigcup_{p\geq1}E_{p}^{i,j}\bigg)=\{\mathbf{e}=e_{1}\cdots e_{p}\in \mathcal{F}_{k}:\mathbf{e}\in E_{p}^{i,j}~\text{for some }p\geq1\}.$$
Note that $\mathcal{F}_{k}=\bigcup_{i,j=1}^{t}\mathcal{F}_{k}^{i,j}$. For $i,j\in\{1,\cdots,t\}$, $k\geq1$, let $\mathbb{V}_{k}$ be the set of \textit{$k$-th level vertices} defined as
$$\mathbb{V}_{k}:=\{(S_{\mathbf{e}},i,j,k):\mathbf{e}\in\mathcal{F}_{k}^{i,j},1\leq i,j\leq t\}.$$
For $\mathbf{e}\in\mathcal{F}_{k}^{i,j}$, we call $(S_{\mathbf{e}},i,j,k)$ (or simply $(S_{\mathbf{e}},k)$) a \text{vertex}. For a vertex $\omega=(S_{\mathbf{e}},i,j,k)\in\mathbb{V}_{k}$ with $k\geq1$, let
$$S_{\omega}=S_{\mathbf{e}}\quad\text{and}\quad \rho_{\omega}=\rho_{\mathbf{e}}.$$
Let $\mathcal{F}_{0}=\{1,\dots,t\}$ and $\mathbb{V}_{0}=\{\omega^{1}_{{\rm root}},\dots,\omega^{t}_{{\rm root}}\}$, where $\omega^{i}_{{\rm root}}=(I,i,i,0)$ and $I$ is the identity map on $M$. Then we say $\mathbb{V}_{0}$ is the set of \textit{root vertices}, and $\{\mathcal{F}_{k}\}_{k=0}^{\infty}$ is a sequence of nested index sets if $\{\mathcal{F}_{k}\}_{k=1}^{\infty}$ is. Let $\mathbb{V}:=\bigcup_{k\geq0}\mathbb{V}_{k}$ be the set of all vertices, and $\pi:\bigcup_{k\geq0}\mathcal{F}_{k}\longrightarrow \mathbb{V}$ be defined as
$$\pi(\mathbf{e}):=\begin{cases}(S_{\mathbf{e}},i,j,k),\quad\text{if }\mathbf{e}\in\mathcal{F}_{k}^{i,j}, k\geq1,\\
\omega^{i}_{{\rm root}},\quad\text{if }\mathbf{e}=i\in\mathcal{F}_{0}.\end{cases}$$

Let $\omega\in\mathbb{V}_{k}$ and $\omega'\in\mathbb{V}_{k+1}$. Suppose that there exist directed paths $\mathbf{e}\in\mathcal{F}_{k},\mathbf{e}'\in\mathcal{F}_{k+1}$ and $\mathbf{k}\in E^{*}$ such that $\pi(\mathbf{e})=\omega$, $\pi(\mathbf{e}')=\omega'$ and $\mathbf{e}'=\mathbf{e}\mathbf{k}$. Then we connect a \textit{directed edge} $\mathbf{k}$ from $\omega$ to $\omega'$, and denote this as $\omega\stackrel{\mathbf{k}}{\longrightarrow}\omega'$. We call $\omega$ a \textit{parent} of $\omega'$ and $\omega'$ an \textit{offspring} of $\omega$.
Define a graph $\mathbb{G}:=(\mathbb{V},\mathbb{E})$, where $\mathbb{E}$ is the set of all directed edges of $\mathbb{G}$. Let $\mathbb{G}_{R}:=(\mathbb{V}_{R},\mathbb{E}_{R})$ be the \textit{reduced graph} of $\mathbb{G}$, defined as in Section \ref{S:3} similarly, where $\mathbb{V}_{R}$ and $\mathbb{E}_{R}$ are the sets of all vertices and all directed edges, respectively.

Let $\mathbf{\Omega}=\{\Omega_{i}\}_{i=1}^{t}$, where $\Omega_{i}\subset M$ is a nonempty bounded open set for any $i\in\{1,\dots,t\}$. We say that $\mathbf{\Omega}$ is \textit{invariant} under the GIFS $G=(V,E)$ if
$$\bigcup_{e\in E^{i,j}}S_{e}(\Omega_{j})\subset\Omega_{i},\quad i,j\in\{1,\cdots,t\}.$$
Since $S_{e}$ is a contractive similitude for any $e\in E^{i,j}$, such a family always exists. Fix an invariant family $\mathbf{\Omega}=\{\Omega_{i}\}_{i=1}^{t}$ of $G=(V,E)$. Let $\omega=(S_{\mathbf{e}},i,j,k)\in\mathbb{V}_{k}$ with $\mathbf{e}\in E_{q}^{i,j}$ and $\omega=(S_{\mathbf{e}'},i',j',k)\in\mathbb{V}_{k}$ with $\mathbf{e}'\in E_{s}^{i',j'}$, where $q,s>0$ are integers. We say that two vertices $\omega$, $\omega'$ are \textit{neighbors} (with respect to $\mathbf{\Omega}$) if
$$i=i'\quad\text{and}\quad S_{\mathbf{e}}(\Omega_{j})\cap S_{\mathbf{e}'}(\Omega_{j'})\neq\emptyset.$$
Let
$$\mathcal{N}(\omega):=\{\omega':\omega'\in\mathbb{V}_{k}\text{ is a neighbor of }\omega\},$$
which is called the \textit{neighborhood} of $\omega$ (with respect to $\mathbf{\Omega}$).
\begin{defi}\label{defi(41.1)}
For any two vertices $\omega=(S_{\mathbf{e}_{\omega}},i_{\omega},j_{\omega},k)\in\mathbb{V}_{k}$ and $\omega'=(S_{\mathbf{e}_{\omega'}},i_{\omega'},j_{\omega'},k')\in\mathbb{V}_{k'}$, let $\sigma=(S_{\mathbf{e}_{\sigma}},i_{\omega},j_{\sigma},k)\in\mathcal{N}(\omega)$ and $\sigma'=(S_{\mathbf{e}_{\sigma'}},i_{\omega'},j_{\sigma'},k')\in\mathcal{N}(\omega')$. Assume that
$$\tau=S_{\omega'}S_{\omega}^{-1}:\bigcup_{\sigma\in\mathcal{N}(\omega)}S_{\sigma}(\Omega_{j_{\sigma}})
\longrightarrow \bigcup_{i=1}^{t}\Omega_{i}$$
induces a bijection $f_{\tau}:\mathcal{N}(\omega)\longrightarrow\mathcal{N}(\omega')$ defined by
\begin{equation}\label{eq(41.1)}
f_{\tau}(\sigma)=f_{\tau}(S_{\mathbf{e}_{\sigma}},i_{\omega},j_{\sigma},k)=(\tau\circ S_{\mathbf{e}_{\sigma'}},i_{\omega'},j_{\sigma'},k').
\end{equation}
We say $\omega$ and $\omega'$ are \textit{equivalent}, i.e., $\omega\sim\omega'$, if the following conditions hold:
\begin{itemize}
\item[$(a)$] $\#\mathcal{N}(\omega)=\#\mathcal{N}(\omega')$ and $j_{\sigma}=j_{\sigma'}$ in \eqref{eq(41.1)};
\item[$(b)$] for $\sigma\in\mathcal{N}(\omega)$ and $\sigma'\in\mathcal{N}(\omega')$ such that $f_{\tau}(\sigma)=\sigma'$, and for any positive integer $k_{0}\geq1$, a directed path $\mathbf{e}\in E^{*}$ satisfies $(S_{\sigma}\circ S_{\mathbf{e}},k+k_{0})\in\mathbb{V}_{k+k_{0}}$ if and only if it satisfies
$(S_{\sigma'}\circ S_{\mathbf{e}},k'+k_{0})\in\mathbb{V}_{k'+k_{0}}$.
\end{itemize}
\end{defi}
It is easy to check that $\sim$ is an equivalence relation. Denote the equivalent class of $\omega$ by $[\omega]$, and call it the \textit{neighborhood types} of $\omega$ (with respect to $\mathbf{\Omega}$).

For a graph $\mathbb{G}=(\mathbb{V},\mathbb{E})$, we can prove that $\mathbb{G}$ satisfies Proposition \ref{prop(3.1)} as in \cite[Propositin 2.5]{Ngai-Wang-Dong_2010}; we omit the details. We now define the graph finite type condition.

\begin{defi}\label{defi(41.2)}
Let $M$ be a complete $n$-dimensional Riemannian manifold that is locally Euclidean, let $G=(V,E)$ be a GIFS, and let $\{S_{e}\}_{e\in E}$ be a family of contractive similitudes defined on $M$.
If there exists an invariant family of nonempty bounded open sets $\mathbf{\Omega}=\{\Omega_{i}\}_{i=1}^{t}$ with respect to some sequence of nested index sets $\{\mathcal{F}_{k}\}_{k=0}^{\infty}$ such that
$$\#\mathbb{V}/_{\sim}<\infty,$$
then we say that $G=(V,E)$ satisfies the \textit{graph finite type condition} (GFTC).
We call such an invariant family $\mathbf{\Omega}$ a \textit{graph finite type condition family} of $G$.
\end{defi}

By assuming a GIFS satisfies (GFTC), we get a formula for $\dim_{{\rm H}}(K)$.

\begin{proof}[Proof of Theorem~\ref{thm(41.1)}] The proof is similar to that of \cite[Theorem 1.1]{Ngai-Wang-Dong_2010} and the definition of a weighted incidence matrix is the same as that in Section \ref{S:4}; we omit the details.
\end{proof}

\section{Examples}\label{S:5}
In this section, we construct some examples of CIFSs and GIFSs on Riemannian manifolds satisfying (FTC) and (GFTC), respectively.

Let $\{f_{i}\}_{i=1}^{N}$ be a contractive CIFS on a compact set $W_0\subset\mathbb{R}^{n}$, i.e., $f_{i}$ is $C^{1+\varepsilon}$ where $0<\varepsilon<1$, and there exists an open and connected set $U_{0}\supset W_{0}$ such that for any $i\in\{1,\dots,N\}$, $f_{i}$ can be extended to an injective conformal map $f_{i}:U_{0}\longrightarrow U_{0}$. Let $M$ be a complete $n$-dimensional smooth Riemannian manifold. Assume that there exists a diffeomorphism
$$\varphi:U\longrightarrow U_{0},$$
where $U\subset M$ is open and connected.
Define
\begin{equation}\label{eq(5.1)}
S_{i}:=\varphi^{-1}\circ f_{i}\circ\varphi:U\longrightarrow S_{i}(U)\quad\text{for any }i\in\{1,\dots,N\}.
\end{equation}
By \cite[Proposition 7.1]{Ngai-Xu_2022}, $\{S_{i}\}_{i=1}^{N}$ is a contractive CIFS on $U$.

Fix a sequence of nested index sets $\{\mathcal{M}_{k}\}_{k=0}^{\infty}$. Let $\widetilde{\mathcal{V}}$ and $\mathcal{V}$ be the sets of all vertices with respect to $\{\mathcal{M}_{k}\}_{k=0}^{\infty}$ of $\{f_{i}\}_{i=1}^{N}$ and $\{S_{i}\}_{i=1}^{N}$, respectively.
For $\tilde{\omega}=(f_{\mathbf{u}},k)\in\widetilde{\mathcal{V}}_{k}$ and $\omega=(S_{\mathbf{u}},k)\in\mathcal{V}_{k}$, where $\mathbf{u}\in\mathcal{M}_{k}$, $k\geq0$, we define $f_{\tilde{\omega}}:=f_{\mathbf{u}}$ and $S_{\omega}:=S_{\mathbf{u}}$. It follows from \eqref{eq(5.1)} that
\begin{equation}\label{eq(5.01)}
S_{\omega}=\varphi^{-1}\circ f_{\tilde{\omega}}\circ\varphi.
\end{equation}

\begin{prop}\label{prop(5.1)}
Let $S_{i}$ be defined as in (\ref{eq(5.1)}), where $i\in\{1,\dots,N\}$. If $\{f_{i}\}_{i=1}^{N}$ satisfies (FTC), then $\{S_{i}\}_{i=1}^{N}$ satisfies (FTC).
\end{prop}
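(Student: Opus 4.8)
The plan is to transport the finite type structure of $\{f_{i}\}_{i=1}^{N}$ to $\{S_{i}\}_{i=1}^{N}$ through the conjugating diffeomorphism $\varphi$. Let $\Omega_{0}\subset W_{0}$ be an FTC set for $\{f_{i}\}_{i=1}^{N}$ with associated sequence of nested index sets $\{\mathcal{M}_{k}\}_{k=0}^{\infty}$, and set $\Omega:=\varphi^{-1}(\Omega_{0})$ and $W:=\varphi^{-1}(W_{0})$. Since $\varphi$ is a diffeomorphism, $\Omega$ is open, and by \eqref{eq(5.01)} we have $S_{i}(\Omega)=\varphi^{-1}\big(f_{i}(\Omega_{0})\big)\subset\varphi^{-1}(\Omega_{0})=\Omega$, so $\Omega$ is invariant under $\{S_{i}\}_{i=1}^{N}$. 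I would use the \emph{same} sequence $\{\mathcal{M}_{k}\}_{k=0}^{\infty}$, which is legitimate because Definition \ref{defi(3.1)} is purely combinatorial, involving only word lengths and the partial order on $\Sigma^{\ast}$. The crucial elementary fact is that conjugation $g\mapsto\varphi^{-1}\circ g\circ\varphi$ is injective on maps; hence by \eqref{eq(5.01)}, $S_{\mathbf{u}}=S_{\mathbf{v}}$ if and only if $f_{\mathbf{u}}=f_{\mathbf{v}}$. Consequently $\tilde{\omega}=(f_{\mathbf{u}},k)\mapsto\omega=(S_{\mathbf{u}},k)$ is a well-defined bijection between $\widetilde{\mathcal{V}}_{k}$ and $\mathcal{V}_{k}$, and hence between $\widetilde{\mathcal{V}}$ and $\mathcal{V}$.

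Next I would show this bijection preserves the neighbor relation. From \eqref{eq(5.01)} and $\Omega=\varphi^{-1}(\Omega_{0})$ one gets $S_{\omega}(\Omega)=\varphi^{-1}\big(f_{\tilde{\omega}}(\Omega_{0})\big)$, so
$$S_{\omega}(\Omega)\cap S_{\omega'}(\Omega)=\varphi^{-1}\big(f_{\tilde{\omega}}(\Omega_{0})\cap f_{\tilde{\omega}'}(\Omega_{0})\big).$$
Since $\varphi^{-1}$ is a bijection, the left-hand side is nonempty if and only if $f_{\tilde{\omega}}(\Omega_{0})\cap f_{\tilde{\omega}'}(\Omega_{0})\neq\emptyset$. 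Thus $\omega,\omega'$ are neighbors for $\{S_{i}\}$ exactly when $\tilde{\omega},\tilde{\omega}'$ are neighbors for $\{f_{i}\}$, and the bijection carries $\Omega(\omega)$ onto the corresponding neighborhood of $\tilde{\omega}$.

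Finally I would verify that $\sim$ is preserved. For $\omega,\omega'$, the map in Definition \ref{defi(3.2)} is $\tau=S_{\omega'}S_{\omega}^{-1}=\varphi^{-1}\circ\big(f_{\tilde{\omega}'}f_{\tilde{\omega}}^{-1}\big)\circ\varphi=\varphi^{-1}\circ\tilde{\tau}\circ\varphi$, where $\tilde{\tau}=f_{\tilde{\omega}'}f_{\tilde{\omega}}^{-1}$ is the corresponding map for $\{f_{i}\}$. For condition $(a)$, since $\tau S_{\sigma}=\varphi^{-1}(\tilde{\tau}f_{\tilde{\sigma}})\varphi$ and $S_{\sigma'}=\varphi^{-1}f_{\tilde{\sigma}'}\varphi$, injectivity of conjugation gives $S_{\sigma'}=\tau S_{\sigma}$ if and only if $f_{\tilde{\sigma}'}=\tilde{\tau}f_{\tilde{\sigma}}$, so the two set equalities coincide. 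For condition $(b)$, membership $(S_{\sigma}\circ S_{\mathbf{u}},k+k_{0})\in\mathcal{V}_{k+k_{0}}$ means $S_{\sigma}S_{\mathbf{u}}=S_{\mathbf{w}}$ for some $\mathbf{w}\in\mathcal{M}_{k+k_{0}}$, which by the injectivity of conjugation is equivalent to $f_{\tilde{\sigma}}f_{\mathbf{u}}=f_{\mathbf{w}}$, i.e. $(f_{\tilde{\sigma}}\circ f_{\mathbf{u}},k+k_{0})\in\widetilde{\mathcal{V}}_{k+k_{0}}$. Hence $\omega\sim\omega'$ if and only if $\tilde{\omega}\sim\tilde{\omega}'$, so the bijection descends to a bijection $\mathcal{V}/_{\sim}\to\widetilde{\mathcal{V}}/_{\sim}$ and $\#\mathcal{V}/_{\sim}=\#\widetilde{\mathcal{V}}/_{\sim}<\infty$. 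Since each $S_{i}$ is an injective contraction (being conjugate to the injective $f_{i}$), $\Omega$ is an FTC set for $\{S_{i}\}_{i=1}^{N}$, which is the desired conclusion. I expect the only delicate point to be the bookkeeping in condition $(b)$ of Definition \ref{defi(3.2)}, namely checking that conjugation preserves not merely individual compositions but the identification of which words define the same map, and thus membership in the vertex sets $\mathcal{V}_{k+k_{0}}$; everything else is a direct transport through $\varphi$.
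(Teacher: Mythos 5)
Your proposal is correct and follows essentially the same route as the paper: transporting the finite type structure through the conjugation $g\mapsto\varphi^{-1}\circ g\circ\varphi$, which preserves compositions, coincidences of maps, intersections of images, and hence the neighborhood types. You are in fact somewhat more thorough than the paper's own proof, which only verifies the preservation of the composed maps $S_{\omega}^{-1}\circ S_{\sigma}$ and leaves the invariant set, the neighbor relation, and condition $(b)$ of Definition \ref{defi(3.2)} implicit.
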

\begin{proof}
For any $\tilde{\omega},\tilde{\omega}'\in\widetilde{\mathcal{V}}$ and $\tilde{\omega}\sim\tilde{\omega}'$, by Definition \ref{defi(3.2)}, there exist $\tilde{\sigma}\in\Omega(\tilde{\omega})$ and $\tilde{\sigma}'\in\Omega(\tilde{\omega}')$ such that
\begin{equation}\label{eq(5.2)}
f_{\tilde{\omega}'}^{-1}\circ f_{\tilde{\sigma}'}=f_{\tilde{\omega}}^{-1}\circ f_{\tilde{\sigma}}.
\end{equation}
For any $\omega,\omega'\in\mathcal{V}$, $\sigma\in\Omega(\omega)$ and $\sigma'\in\Omega(\omega')$, we have
$$\begin{aligned}
S_{\omega'}^{-1}\circ S_{\sigma'}&=\varphi^{-1}\circ f_{\tilde{\omega}'}^{-1}\circ\varphi\circ\varphi^{-1}\circ f_{\tilde{\sigma}'}\circ\varphi\quad\text{(by~(\ref{eq(5.01)}))}\\
&=\varphi^{-1}\circ f_{\tilde{\omega}}^{-1}\circ f_{\tilde{\sigma}}\circ\varphi\quad\text{(by~(\ref{eq(5.2)}))}\\
&=\varphi^{-1}\circ f_{\tilde{\omega}}^{-1}\circ\varphi\circ\varphi^{-1}\circ f_{\tilde{\sigma}}\circ\varphi\\
&=S_{\omega}^{-1}\circ S_{\sigma}.
\end{aligned}$$
It follows that $\omega\sim\omega'$. Since $\{f_{i}\}_{i=1}^{N}$ satisfies (FTC),
we have $\#\widetilde{\mathcal{V}}/_{\sim}<\infty$. Thus, $\#\mathcal{V}/_{\sim}<\infty$. This proves the proposition.
\end{proof}

Let
$$\mathbb{S}^{n}:=\left\{(x_{1},\dots,x_{n+1})\in \mathbb{R}^{n+1}:\sum_{i=1}^{n+1}x_{i}^{2}=1\right\},\quad
\mathbb{D}^{n}:=\left\{(x_{1},\dots,x_{n})\in \mathbb{R}^{n}:\sum_{i=1}^{n}x_{i}^{2}<1\right\}.$$
Let $\mathbb{S}^{n}_{+}$ be the upper hemisphere of $\mathbb{S}^{n}$, and  define the stereographic projection $\varphi:\mathbb{S}^{n}_{+}\rightarrow \mathbb{D}^{n}$ as
$$
\varphi(x_{1},\dots,x_{n+1})=\frac{1}{1+x_{n+1}}(x_{1},\dots,x_{n}):=(y_{1},\dots,y_{n}).
$$
Then
$$\varphi^{-1}(y_{1},\dots,y_{n})=\frac{1}{|\boldsymbol{y}|^{2}+1}\big(2y_{1},\dots,2y_{n},1-|\boldsymbol{y}|^{2}\big),$$
where $|\boldsymbol{y}|^{2}=y_{1}^{2}+\cdots+y_{n}^{2}$.

For convenience, we consider the case of $n=2$. We will give some actual examples of Proposition \ref{prop(5.1)}. The first example below satisfies (OSC), while the other two satisfy (FTC) but not (OSC).

\begin{exam}\label{exam(5.1)}
Let $\{f_{i}\}_{i=1}^{3}$ be a Sierpinski gasket on $\mathbb{R}^{2}$, i.e., for $\boldsymbol{x}\in\mathbb{R}^{2}$,
$$f_{1}(\boldsymbol{x})=\frac{1}{2}\boldsymbol{x}+\bigg(0,\frac{1}{2}\bigg),\quad
f_{2}(\boldsymbol{x})=\frac{1}{2}\boldsymbol{x}+\bigg(-\frac{1}{4},0\bigg),\quad
f_{3}(\boldsymbol{x})=\frac{1}{2}\boldsymbol{x}+\bigg(\frac{1}{4},0\bigg).$$
Let $S_{i}$ be defined as in (\ref{eq(5.1)}), where $i\in\{1,2,3\}$. Then $\{S_{i}\}_{i=1}^{3}$ is a CIFS satisfying (OSC) on $\mathbb{S}^{2}_{+}$ (see Figure \ref{fig.1}(a)).
\end{exam}

\begin{exam}\label{exam(5.2)}
Let $\{f_{i}\}_{i=1}^{4}$ be a CIFS defined as in \cite{Lau-Ngai_2007} satisfying (FTC), i.e., for $\boldsymbol{x}\in\mathbb{R}^{2}$,
$$\begin{aligned}
f_{1}(\boldsymbol{x})&=\rho\boldsymbol{x}+\bigg(\frac{1}{2}\rho,0\bigg),\qquad
f_{2}(\boldsymbol{x})=r\boldsymbol{x}+\bigg(\rho-\rho r+\frac{1}{2}r,0\bigg),\\
f_{3}(\boldsymbol{x})&=r\boldsymbol{x}+\bigg(1-\frac{1}{2}r,0\bigg),\qquad
f_{4}(\boldsymbol{x})
=r\boldsymbol{x}+\bigg(\frac{1}{2}r,1-r\bigg),
\end{aligned}$$
where $0<\rho,r<1$ and $\rho+2r-\rho r\leq1$. Let $S_{i}$ be defined as in (\ref{eq(5.1)}), where $i\in\{1,2,3,4\}$. By Proposition \ref{prop(5.1)}, $\{S_{i}\}_{i=1}^{4}$ is a CIFS satisfying (FTC) on $\mathbb{S}^{2}_{+}$ (see Figure \ref{fig.1}(b)).
\end{exam}

\begin{exam}\label{exam(5.3)}
Let $\{f_{i}\}_{i=1}^{3}$ be a golden Sierpinski gasket defined as in \cite{Ngai-Wang_2001} satisfying (FTC), i.e., for $\boldsymbol{x}\in\mathbb{R}^{2}$,
$$
f_{1}(\boldsymbol{x})=\rho \boldsymbol{x},\quad
f_{2}(\boldsymbol{x})=\rho \boldsymbol{x}+\big(\rho^{2},0\big),\quad
f_{3}(\boldsymbol{x})=\rho^{2} \boldsymbol{x}+\big(\rho,\rho\big),
$$
where $\rho=\big(\sqrt{5}-1\big)/2$. Let $S_{i}$ be defined as in (\ref{eq(5.1)}), where $i\in\{1,2,3\}$. By Proposition \ref{prop(5.1)}, $\{S_{i}\}_{i=1}^{3}$ is a CIFS satisfying (FTC) on $\mathbb{S}^{2}_{+}$ (see Figure \ref{fig.1}(c)).
\end{exam}

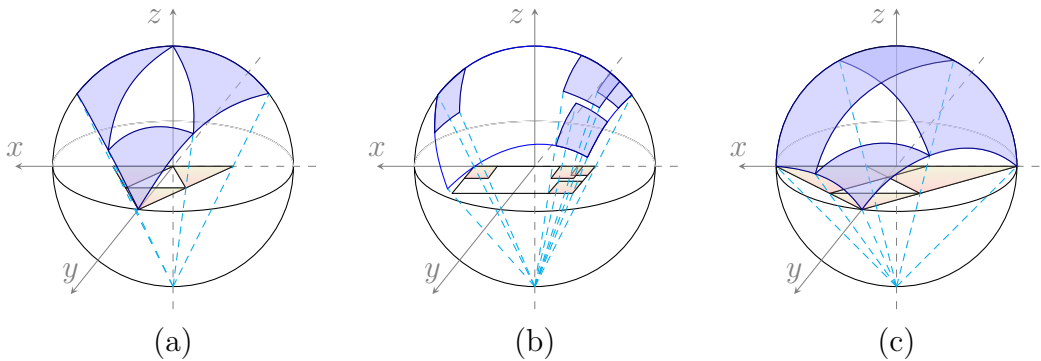
\begin{figure}[htbp]
\begin{center}
   \begin{tikzpicture}[scale=0.2,domain=0:180,>=stealth]
    \coordinate (org) at (0,0);
    \draw (0,0) circle[radius=8];
    \draw (org) ellipse (8cm and 3cm);
    \draw[help lines,dashed](9.5,0)--(0,0);
    \draw[help lines,dashed](0,-9.5)--(0,0);
    \draw[help lines,dashed](5.8,7.25)--(0,0);
    \draw[white] plot ({8*cos(\x)},{3*sin(\x)});
    \foreach\i/\text in{{-6.8,-8.5}/y,{-10.5,0}/x,{0,10.5}/{}}
    \draw[help lines,->] (org)node[above right]{}--(\i)node[above]{$\text$};
    \draw[help lines,->]node[left] at (0,10){$z$};
    \draw[thin,black] (-4,0)--(4,0);
    \draw[thin,black] (-4,0)--(-2.3,-2.875);
    \draw[thin,black] (4,0)--(-2.3,-2.875);
    \draw[thin,black] (-3.15,-1.4375)--(0.85,-1.4375);
    \draw[thin,black] (0,0)--(0.85,-1.4375);
    \draw[thin,black] (-3.15,-1.4375)--(0,0);
    \draw[top color=olive!60!white!30,bottom color=red!30,opacity=.5](-4,0)--(0,0)--(-3.15,-1.4375)--(-4,0);
    \draw[top color=olive!60!white!30,bottom color=red!30,opacity=.5](4,0)--(0,0)--(0.85,-1.4375)--(4,0);
    \draw[top color=olive!60!white!30,bottom color=red!30,opacity=.5](-2.3,-2.875)--(-3.15,-1.4375)--(0.85,-1.4375)--(-2.3,-2.875);
    \draw[thin,cyan,dashed](0,-8)--(-6.4,4.86);
    \draw[thin,cyan,dashed](0,-8)--(-4.4,1.2);
    \draw[thin,cyan,densely dashed](0,-8)--(1.36,2.4);
    \draw[thin,cyan,densely dashed](0,-8)--(6.38,4.8);
    \draw[thin,blue](6.36,4.85) to [out=129.2, in=-2.7](0,8) to [out=182, in=51](-6.39,4.81);
    \draw[thin,blue](-2.3,-2.82) to [out=58, in=-167.8](6.36,4.85);
    \draw[thin,blue](-2.3,-2.875)arc(26:29.58:140);
    \draw[thin,blue](1.35,2.17) to [out=158, in=49](-4.3,1.05);
    \draw[thin,blue](-4.3,1.1) to [out=84, in=-148](0,8);
    \draw[thin,blue](0,8) to [out=-59, in=86](1.35,2.17);
    \draw[fill=blue!30,opacity=.5] (1.35,2.17) to [out=158, in=49](-4.3,1.05) to [out=-60, in=118](-2.31,-2.82) to [out=58, in=-134](1.35,2.17); 
    \draw[fill=blue!30,opacity=.5] (0,8) to [out=182, in=51](-6.39,4.81) to [out=-56, in=121](-4.3,1.1) to [out=84, in=-148](0,8); 
    \draw[fill=blue!30,opacity=.5] (6.36,4.85) to [out=129.2, in=-2.7](0,8) to [out=-59, in=86](1.35,2.17) to [out=43, in=-167.8](6.36,4.85); 
     \foreach \i/\tex in {0/(a)}
     \draw(0,-10)node[below]{\tex};
     \foreach \i/\tex in {0/}
     \draw(0,-19)node[below]{\tex};
   \end{tikzpicture}
   \quad
  \begin{tikzpicture}[scale=.2,domain=0:180,>=stealth]
    \coordinate (org) at (0,0);
    \draw (0,0) circle[radius=8];
    \draw (org) ellipse (8cm and 3cm);
    \draw[help lines,dashed](9.5,0)--(0,0);
    \draw[help lines,dashed](0,-9.5)--(0,0);
    \draw[help lines,dashed](5.8,7.25)--(0,0);
    \draw[white] plot ({8*cos(\x)},{3*sin(\x)});
    \foreach\i/\text in{{-6.8,-8.5}/y,{-10.5,0}/x,{0,10.5}/{}}
    \draw[help lines,->] (org)node[above right]{}--(\i)node[above]{$\text$};
    \draw[help lines,->]node[left] at (0,10){$z$};
    \draw[thin,black] (-4.04,0)--(4.02,0);
    \draw[thin,black] (-5.52,-1.8)--(2.52,-1.8);
    \draw[thin,black] (4.02,0)--(2.5,-1.8);
    \draw[thin,black] (-4.02,0)--(-5.5,-1.8);
    \draw[thin,black] (2.4,-0.6)--(2.85,0);
    \draw[thin,black] (2.4,-0.6)--(3.5,-0.6);
    \draw[thin,black] (1.7,0)--(1.08,-0.8);
    \draw[thin,black] (1.08,-0.8)--(2.58,-0.8);
    \draw[thin,black] (2.58,-0.8)--(3.2,0);
    \draw[thin,black] (-3.1,-0.8)--(-2.5,0);
    \draw[thin,black] (-3.1,-0.8)--(-4.68,-0.8);
    \draw[thin,black] (3.14,-1)--(1.6,-1);
    \draw[thin,black] (1.6,-1)--(0.9,-1.8);
    \draw[top color=olive!60!white!30,bottom color=red!30,opacity=.5]
    (4,0)--(3.5,-0.6)--(2.4,-0.6)--(2.85,0)--(4,0);
    \draw[top color=olive!60!white!30,bottom color=red!30,opacity=.5]
    (3.2,0)--(2.58,-0.8)--(1.08,-0.8)--(1.7,0)--(3.2,0);
     \draw[top color=olive!60!white!30,bottom color=red!30,opacity=.5]
    (-4.7,-0.8)--(-3.1,-0.8)--(-2.5,0)--(-4,0)--(-4.68,-0.8);
     \draw[top color=olive!60!white!30,bottom color=red!30,opacity=.5]
    (1.6,-1)--(3.14,-1)--(2.52,-1.8)--(0.9,-1.8)--(1.6,-1);
   \draw[thin,blue] (-6.45,4.75) to [out=-100, in=110](-5.8,-1.5) to [out=45, in=150] (3.5,0.6) to [out=63, in=-138] (6.45,4.75) to [out=129.1, in=-2](0,8) to [out=181, in=50](-6.45,4.75);
    \draw[thin,cyan,dashed](0,-8)--(-5.8,-1.5);
    \draw[thin,cyan,densely dashed](0,-8)--(-6.45,4.75);
    \draw[thin,cyan,densely dashed](0,-8)--(-5,3.6);
    \draw[thin,cyan,densely dashed](0,-8)--(3.5,0.6);
    \draw[thin,cyan,densely dashed](0,-8)--(6.45,4.75);
    \draw[thin,blue] (-4.6,6.55) to [out=-140, in=52] (-6.45,4.75) to [out=-102, in=89.5] (-6.65,2.25) to [out=46, in=-146] (-5,3.6) to [out=90, in=-105] (-4.6,6.55);
    \draw[fill=blue!30,opacity=.5](-4.6,6.55) to [out=-140, in=52] (-6.45,4.75) to [out=-102, in=89.5] (-6.65,2.25) to [out=46, in=-146] (-5,3.6) to [out=90, in=-105] (-4.6,6.55);

    \draw[thin,cyan,densely dashed](0,-8)--(2.74,4.2);
    \draw[thin,blue] (1.4,1.37) to [out=-11,in=152] (3.5,0.6) to [out=63,in=-126](4.83,2.95) to [out=142,in=-22](2.74,4.2)to[out=-125,in=72](1.4,1.37);
    \draw[fill=blue!30,opacity=.5](1.4,1.37) to [out=-11,in=152] (3.5,0.6) to [out=63,in=-126](4.83,2.95) to [out=142,in=-22](2.74,4.2)to[out=-125,in=72](1.4,1.37);

    \draw[thin,cyan,densely dashed](0,-8)--(4.15,4.95);
    \draw[thin,blue](4.15,4.95) to [out=-32,in=137] (5.565,3.87) to [out=51,in=-135] (6.45,4.75) to [out=127,in=-42] (5.05,6.2) to [out=-132,in=60] (4.15,4.95);
    \draw[fill=blue!30,opacity=.5](4.15,4.95) to [out=-32,in=137] (5.565,3.87) to [out=51,in=-135] (6.45,4.75) to [out=127,in=-42] (5.05,6.2) to [out=-132,in=60] (4.15,4.95);

    \draw[thin,cyan,dashed](0,-8)--(1.9,5);
    \draw[thin,cyan,densely dashed](0,-8)--(4.3,3.9);
    \draw[thin,blue](3.1,7.375) to [out=-125,in=70] (1.9,5) to [out=-20,in=150] (4.3,3.9) to [out=55,in=-130] (5.65,5.65) to [out=138,in=-21] (3.1,7.375);
    \draw[fill=blue!30,opacity=.5](3.1,7.375) to [out=-125,in=70] (1.9,5) to [out=-20,in=150] (4.3,3.9) to [out=55,in=-130] (5.65,5.65) to [out=138,in=-21] (3.1,7.375);
   \foreach \i/\tex in {0/(b)}
     \draw(0,-10)node[below]{\tex};
     \foreach \i/\tex in {0/\text{}}
     \draw(0,-19)node[below]{\tex};
   \end{tikzpicture}
   \quad
  \begin{tikzpicture}[scale=.2,domain=0:180,>=stealth]
    \coordinate (org) at (0,0);
    \draw (0,0) circle[radius=8];
    \draw (org) ellipse (8cm and 3cm);
    \draw[help lines,dashed](9.5,0)--(0,0);
    \draw[help lines,dashed](0,-9.5)--(0,0);
    \draw[help lines,dashed](5.8,7.25)--(0,0);
    \draw[white] plot ({8*cos(\x)},{3*sin(\x)});
    \foreach\i/\text in{{-6.8,-8.5}/y,{-10.5,0}/x,{0,10.5}/{}}
    \draw[help lines,->] (org)node[above right]{}--(\i)node[above]{$\text$};
    \draw[help lines,->]node[left] at (0,10){$z$};
    \draw[thin,black] (-8,0)--(-2.3,-2.875);
    \draw[thin,black] (-2.3,-2.875)--(8,0);
    \draw[thin,black] (8,0)--(-8,0);
    \draw[thin,black] (-4.4375,-1.797)--(1.5625,-1.797);
    \draw[thin,black] (-4.4375,-1.797)--(2,0);
    \draw[thin,black] (-2,0)--(1.5625,-1.797);
    \draw[top color=olive!60!white!30,bottom color=red!30,opacity=.5]
    (1.5625,-1.797)--(-2,0)--(8,0)--(1.5625,-1.797);
    \draw[top color=olive!60!white!30,bottom color=red!30,opacity=.5]
    (-4.4375,-1.797)--(-2.3,-2.875)--(1.5625,-1.797)--(-4.4375,-1.797);
    \draw[top color=olive!60!white!30,bottom color=red!30,opacity=.5](-8,0)--(-4.4375,-1.797)--(2,0)--(-8,0);
    \draw[thin,cyan,densely dashed](0,-8)--(-8,0);
    \draw[thin,cyan,dashed](0,-8)--(-2.3,-2.875);
    \draw[thin,cyan,densely dashed](0,-8)--(8,0);
    \draw[thin,blue] (-8,0) to [out=90, in=-180](0,8) to [out=0, in=90] (8,0) to [out=140, in=55] (-2.3,-2.875)
    to [out=125, in=-5] (-8,0);
    \draw[thin,cyan,densely dashed](0,-8)--(-5.4,-0.51);
    \draw[thin,cyan,densely dashed](0,-8)--(-3.8,7.05);
    \draw[thin,cyan,densely dashed](0,-8)--(3.8,7.05);
    \draw[thin,blue](-5.4,-0.51) to [out=45, in=150] (2.2,0.7);
    \draw[thin,blue](3.8,7.05) to [out=-175, in=75] (-5.4,-0.51);
    \draw[thin,blue](-3.8,7.05) to [out=-20, in=110] (2.2,0.7);
 \draw[fill=blue!30,opacity=.5](-5.4,-0.51) to [out=45, in=150] (2.2,0.7) to [out=-158, in=53] (-2.3,-2.875) to [out=126, in=-19](-5.4,-0.51);
 \draw[fill=blue!30,opacity=.5](8,0) to [out=90, in=0] (0,8) to [out=-180, in=26] (-3.8,7.05) to [out=-20, in=110] (2.2,0.7) to [out=20, in=140] (8,0);
 \draw[fill=blue!30,opacity=.5](-8,0) to [out=90, in=180] (0,8) to [out=0, in=154] (3.8,7.05) to [out=-175, in=75] (-5.4,-0.51) to [out=162, in=-5] (-8,0);
  \foreach \i/\tex in {0/(c)}
     \draw(0,-10)node[below]{\tex};
     \foreach \i/\tex in {0/}
     \draw(0,-19)node[below]{\tex};
  \end{tikzpicture}
  \vspace{-3.2em}
\caption{Figures (a), (b) and (c) are respectively the IFSs in Examples \ref{exam(5.1)}, \ref{exam(5.2)} and \ref{exam(5.3)}.} \label{fig.1}
\end{center}
\end{figure}

Let $M$ be a complete $n$-dimensional smooth Riemannian manifold that is locally Euclidean.
Now, we construct an example of GIFS on $M$ satisfying (GFTC) to illustrate Theorem \ref{thm(41.1)}. Let $G=(V,E)$ be a GIFS of contractive similitudes defined on $\mathbb{R}^{n}$, and $\{O_{i}\}_{i=1}^{t}$, where $O_{i}\subset\mathbb{R}^{n}$, be an invariant family of nonempty bounded open sets under $G=(V,E)$. Let $O:=\bigcup_{i=1}^{t}O_{i}$. For any edge $e\in E$, there corresponds a contractive similitude $f_{e}:O\longrightarrow O$. Assume that there exists a diffeomorphism
$$\varphi:O_{i}\longrightarrow \Omega_{i}\quad\text{for any }i\in\{1,\dots,t\},$$
where $\Omega_{i}\subset M$ is open and connected. Let $\Omega:=\bigcup_{i=1}^{t}\Omega_{i}$.
For any edge $e\in E$, define
\begin{equation}\label{eq(5.3)}
S_{e}:=\varphi^{-1}\circ f_{e}\circ\varphi:\Omega\longrightarrow \Omega.
\end{equation}
As in \cite[Proposition 7.1]{Ngai-Xu_2022}, $\{S_{e}\}_{e\in E}$ is a family of contractive maps on $M$. If for any $e\in E$, $S_{e}$ is a similitude, then $G=(V,E)$, along with $\{\Omega_{i}\}_{i=1}^{t}$ and $\{S_{e}\}_{e\in E}$, forms a GIFS on $M$. The proof of the following proposition is similar to that of Proposition \ref{prop(5.1)}; we omit it.

\begin{prop}\label{prop(5.2)}
Use the above notation and setup. Let $M$ be a complete $n$-dimensional smooth Riemannian manifold that is locally Euclidean. Let $G=(V,E)$ be a GIFS defined on $\mathbb{R}^{n}$ satisfying (GFTC) with $\{O_{i}\}_{i=1}^{t}$ being a GFTC-family and $\{f_{e}\}_{e\in E}$ being an associated family of contractive similitudes.
For any $e\in E$, let $S_{e}$ be a similitude defined as in (\ref{eq(5.3)}). Then the GIFS $G=(V,E)$ defined on $M$ satisfies (GFTC) with $\{\Omega_{i}\}_{i=1}^{t}$ being a GFTC-family and $\{S_{e}\}_{e\in E}$ being an associated family of contractive similitudes. Moreover, for such two GIFSs connected by a diffeomorphism, the neighborhood types, weighted incidence matrices, and the Hausdorff dimension of the corresponding graph self-similar sets are the same.
\end{prop}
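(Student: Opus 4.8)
The plan is to mirror the proof of Proposition~\ref{prop(5.1)}, transferring the whole graph-finite-type apparatus across the conjugating diffeomorphism $\varphi$. First I would equip both GIFSs with the \emph{same} sequence of nested index sets $\{\mathcal{F}_{k}\}_{k=0}^{\infty}$ and write $\widetilde{\mathbb{V}}$, $\mathbb{V}$ for the vertex sets of $\{f_{e}\}_{e\in E}$ and $\{S_{e}\}_{e\in E}$, respectively. Composing (\ref{eq(5.3)}) yields $S_{\mathbf{e}}=\varphi^{-1}\circ f_{\mathbf{e}}\circ\varphi$ for every $\mathbf{e}\in E^{*}$; since $\varphi$ is a bijection, $S_{\mathbf{e}}=S_{\mathbf{e}'}$ if and only if $f_{\mathbf{e}}=f_{\mathbf{e}'}$. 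This gives a label-preserving bijection $\tilde{\omega}=(f_{\mathbf{e}},i,j,k)\mapsto\omega=(S_{\mathbf{e}},i,j,k)$ between $\widetilde{\mathbb{V}}$ and $\mathbb{V}$.

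Next I would check that the neighbor relation is preserved. Using $\varphi(\Omega_{i})=O_{i}$ we have $S_{\mathbf{e}}(\Omega_{j})=\varphi^{-1}\big(f_{\mathbf{e}}(O_{j})\big)$, and since $\varphi^{-1}$ is injective, $S_{\mathbf{e}}(\Omega_{j})\cap S_{\mathbf{e}'}(\Omega_{j'})=\varphi^{-1}\big(f_{\mathbf{e}}(O_{j})\cap f_{\mathbf{e}'}(O_{j'})\big)$ is nonempty exactly when $f_{\mathbf{e}}(O_{j})\cap f_{\mathbf{e}'}(O_{j'})\neq\emptyset$. Hence $\mathcal{N}(\omega)$ corresponds bijectively to $\mathcal{N}(\tilde{\omega})$ with identical $i$- and $j$-labels, which is exactly condition~$(a)$ of Definition~\ref{defi(41.1)}. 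For condition~$(b)$, the membership $(S_{\sigma}\circ S_{\mathbf{e}},k+k_{0})\in\mathbb{V}_{k+k_{0}}$ holds if and only if some directed path in $\mathcal{F}_{k+k_{0}}$ has associated map equal to $S_{\sigma}\circ S_{\mathbf{e}}$; since $S_{\sigma}\circ S_{\mathbf{e}}=\varphi^{-1}\circ(f_{\tilde{\sigma}}\circ f_{\mathbf{e}})\circ\varphi$, two such maps coincide if and only if the corresponding Euclidean maps do, and $\mathcal{F}_{k+k_{0}}$ is common to both systems. Thus the offspring-membership condition is symbolic and identical on the two sides.

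Combining these steps shows $\omega\sim\omega'$ if and only if $\tilde{\omega}\sim\tilde{\omega}'$, so the bijection descends to the quotients and $\#\mathbb{V}/_{\sim}=\#\widetilde{\mathbb{V}}/_{\sim}<\infty$; hence $G$ on $M$ satisfies (GFTC) with $\{\Omega_{i}\}_{i=1}^{t}$ as a GFTC-family. For the \emph{moreover} claim I would push the same bijection down to neighborhood types, identifying $\{\mathcal{T}_{1},\dots,\mathcal{T}_{q}\}$ for the two systems, and then invoke the graph analogue of Proposition~\ref{prop(3.1)} to match the reduced-graph offspring structure. Because $M$ is locally Euclidean and $S_{e}$ is assumed to be a genuine similitude, $\varphi$ restricts to a local isometry on each $\Omega_{i}$ and the contraction ratio of $S_{e}$ equals $\rho_{e}$; therefore every edge weight $\rho_{\mathbf{w}_{k}}^{\alpha}$ is unchanged, so the weighted incidence matrices $A_{\alpha}$ coincide entrywise, their spectral radii $\lambda_{\alpha}$ agree, and the unique root $\alpha$ of $\lambda_{\alpha}=1$ is the same. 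Theorem~\ref{thm(41.1)} then gives equal Hausdorff dimensions.

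The step I expect to be most delicate is condition~$(b)$ together with the ratio identity: one must verify that map coincidences $S_{\mathbf{e}}=S_{\mathbf{e}'}$ transfer to $f_{\mathbf{e}}=f_{\mathbf{e}'}$ (and back) through $\varphi$ simultaneously at all levels $k+k_{0}$, and that the similitude $S_{e}$ really has ratio $\rho_{e}$. It is precisely at this last point that the local-Euclidean hypothesis is indispensable, since it forces $\varphi$ to behave as a local isometry on the relevant open sets, without which the weighted incidence matrices — and hence the dimension formula — need not be preserved.
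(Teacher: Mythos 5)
Your overall strategy is exactly the one the paper intends: the paper omits the proof of Proposition~\ref{prop(5.2)}, saying only that it is ``similar to that of Proposition~\ref{prop(5.1)}'', and your argument is precisely that conjugation-transfer argument carried out for the graph-directed setting. The first part of your proposal is sound: since $S_{\mathbf e}=\varphi^{-1}\circ f_{\mathbf e}\circ\varphi$ and $\varphi$ is a bijection, equalities of maps, intersections of the images $S_{\mathbf e}(\Omega_j)$, and the symbolic membership condition in Definition~\ref{defi(41.1)}$(b)$ all transfer back and forth, so the vertex bijection respects $\sim$ and $\#\mathbb{V}/_{\sim}=\#\widetilde{\mathbb{V}}/_{\sim}<\infty$. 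This is the content of the paper's (omitted) proof, and you have it right.

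The one step I would push back on is your justification of the \emph{moreover} clause. From the hypotheses that $M$ is locally Euclidean and that each $S_e=\varphi^{-1}\circ f_e\circ\varphi$ is a similitude, it does \emph{not} follow that ``$\varphi$ restricts to a local isometry on each $\Omega_i$'': take $M=\mathbb{R}^n$ and $\varphi$ a global dilation; then every $S_e$ is a similitude with the correct ratio $\rho_e$, yet $\varphi$ is nowhere an isometry. So the intermediate claim is false as stated, even though the conclusion you actually need (the ratio of $S_e$ equals $\rho_e$) is the right target. To justify it you should argue directly: for a directed cycle $\mathbf c$ the map $S_{\mathbf c}$ has a fixed point $q=\varphi^{-1}(p)$ with $p$ the fixed point of $f_{\mathbf c}$, and $|\det S_{\mathbf c}'(q)|=|\det f_{\mathbf c}'(p)|$ by the chain rule, which forces the ratio of $S_{\mathbf c}$ to equal $\rho_{\mathbf c}$; alternatively, compare $|S_{\mathbf e}(\Omega_j)|$ with $|f_{\mathbf e}(O_j)|$ under iteration using the bi-Lipschitz constant of $\varphi$ on a compact neighborhood and let the path length tend to infinity. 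Either route gives equality of the cycle weights, hence of the spectral radii and of the Hausdorff dimensions; if you want the weighted incidence matrices to agree entrywise (as the proposition asserts), you should note that the edgewise ratio identity follows from the pointwise determinant computation $|\det S_e'(x)|=|\det(\varphi^{-1})'(f_e(\varphi(x)))|\,\rho_e^{\,n}\,|\det\varphi'(x)|$ combined with the constancy of $|\det\varphi'|$ that the similitude hypothesis imposes along the attractor, rather than from an unproved isometry claim.
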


Assume that $G=(V,E)$ is a GIFS of contractive similitudes defined on $M$ satisfying (GFTC). Let $\mathcal{T}_{1},\dots,\mathcal{T}_{q}$ be all the distinct neighborhood types with $\mathcal{T}_{i}=[\omega_{{\rm root}}^{i}]$ for any $i\in\{1,\dots,t\}$. Fix a vertex $\omega\in\mathbb{V}_{R}$ such that $[\omega]\in\mathcal{T}_{i}$, where $i\in\{1,\dots,q\}$.
Let $\sigma_{1},\dots,\sigma_{m}$ be the offspring of $\omega$ in $\mathbb{G}_{R}$, let $\mathbf{k}_{\ell}$ be the unique edge in $\mathbb{G}_{R}$ connecting $\omega$ to $\sigma_{\ell}$ for $1\leq \ell\leq m$, and let
$$C_{ij}:=\{\sigma_{\ell}:1\leq \ell\leq m,~[\sigma_{\ell}]=\mathcal{T}_{j}\}.$$
Note that for two edges $\mathbf{k}_{\ell}$ and $\mathbf{k}_{\ell'}$ connecting $\omega$ to two distinct $\sigma_{\ell}$ and $\sigma_{\ell'}$ satisfying $[\sigma_{\ell}]=[\sigma_{\ell}']=\mathcal{T}_{j}$, the contraction ratios $\rho_{\sigma_{\ell}}$ and $\rho_{\sigma_{\ell'}}$ may be different. We can partition $C_{ij}:=C_{ij}(1)\cup\cdots\cup C_{ij}(n_{ij})$ by using $\rho_{\sigma_{\ell}}$, where for $s=1,\dots,n_{ij}$,
$$C_{ij}(s):=\{\sigma_{\ell}\in C_{ij}:\rho_{\sigma_{\ell}}=\rho_{ijs}\},$$
and the $\rho_{ijs}$ are distinct. Thus, for any entry $A_{\alpha}(i,j)$ of the weighted incidence matrix,
$$A_{\alpha}(i,j)=\sum_{s=1}^{n_{ij}}\#C_{ij}(s)\rho_{ijs}^{\alpha}.$$
Moreover, we can write symbolically
$$\mathcal{T}_{i}\longrightarrow\sum_{j=1}^{q}\sum_{s=1}^{n_{ij}}\#C_{ij}(s)\mathcal{T}_{j}(\rho_{ijs}),$$
where the $\mathcal{T}_{j}(\rho_{ijs})$ are defined in an obvious way.
We say that $\mathcal{T}_{i}$ generates $\#C_{ij}(s)$ neighborhoods of type $\mathcal{T}_{j}$ with contraction ratio $\rho_{ijs}$.

For $\boldsymbol{x}\in[0,1]\times[0,1]$, we consider the following iterated function system with overlaps:
$$
h_{1}(\boldsymbol{x})=\frac{1}{2}\boldsymbol{x}+\bigg(0,\frac{1}{4}\bigg),\ \
h_{2}(\boldsymbol{x})=\frac{1}{2}\boldsymbol{x}+\bigg(\frac{1}{4},\frac{1}{4}\bigg),\ \
h_{3}(\boldsymbol{x})=\frac{1}{2}\boldsymbol{x}+\bigg(\frac{1}{2},\frac{1}{4}\bigg),\ \
h_{4}(\boldsymbol{x})=\frac{1}{2}\boldsymbol{x}+\bigg(\frac{1}{4},\frac{3}{4}\bigg).
$$
Iterations of $\{h_i\}_{i=1}^4$ induce iterations on the $2$-torus $\mathbb{T}^2=\mathbb R^2/\mathbb Z^2$, generating an attractor on $\mathbb{T}^2$. We are interested in computing the Hausdorff dimension of the attractor. However, the relations induced by $\{h_i\}_{i=1}^4$ on $\mathbb{T}^2$ are not well-defined functions, making it awkward to apply the theory of IFSs developed in Section~\ref{S:4}. To overcome this difficulty, we will use the GIFS framework and Theorem~\ref{thm(41.1)}, as shown in the following example.

\begin{exam}\label{exam(5.4)}
Let $\{h_i\}_{i=1}^4$ be defined as above, and let $\mathbb{T}^2=\mathbb{S}^{1}\times\mathbb{S}^{1}$ be a $2$-torus, viewed as $[0,1]\times[0,1]$ with opposite sides identified. Let $\mathbb{T}^2$ be endowed with the Riemannian metric induced from $\mathbb{R}^2$. Consider the IFS $\{g_{i}\}_{i=1}^{4}$ on $[0,1]\times[0,1]$ under the Euclidean metric, where for $\boldsymbol{x}\in[0,1]\times[0,1]$,
$$
g_{1}(\boldsymbol{x})=h_1(\boldsymbol{x}),\quad
g_{2}(\boldsymbol{x})=h_2(\boldsymbol{x}),\quad
g_{3}(\boldsymbol{x})=h_3(\boldsymbol{x}),
$$
and
$$g_{4}(\boldsymbol{x})=\begin{cases}\frac{1}{2}\boldsymbol{x}+\big(\frac{1}{4},\frac{3}{4}\big),\quad \boldsymbol{x}\in[0,1]\times[0,\frac{1}{2}],\\ \frac{1}{2}\boldsymbol{x}+\big(\frac{1}{4},-\frac{1}{4}\big),\quad \boldsymbol{x}\in[0,1]\times[\frac{1}{2},1]\end{cases}$$
(see Figure \ref{fig.3}(a)). $\{g_{i}\}_{i=1}^{4}$ induces four relations $\{S_{i}\}_{i=1}^{4}$ on $\mathbb{T}^2$. We are interested in the Hausdorff dimension of the attractor generated by $\{S_{i}\}_{i=1}^{4}$.
Note that the image of $S_4$ is a connected rectangle in $\mathbb{T}^2$, but the image of $g_4$ is divided into two rectangles in $\mathbb{R}^2$. It is easy to see that $\{S_{i}\}_{i=1}^{4}$ are not well-defined functions and are not contractive under the metric of $\mathbb{T}^{2}$. As a result, we need to use the framework of a GIFS.
Consider the GIFS $G=(V,E)$ defined on $\mathbb{R}^{2}$ associated to $\{g_{i}\}_{i=1}^{4}$ with $V=\{1,2\}$ and $E=\{e_{1},\dots,e_{8}\}$, where $\mathbf{O}=\{O_{1},O_{2}\}$ with $O_{1}=(0,1)\times(0,1/2)$ and $O_{2}=(0,1)\times(1/2,1)$ is the invariant family, and
$$e_{1},e_{2},e_{3}\in E^{1,1},\quad e_{4}\in E^{1,2},\quad e_{5},e_{6},e_{7}\in E^{2,2},\quad e_{8}\in E^{2,1}$$
(see Figure \ref{fig.2}). The associated similitudes are defined as
$$f_{e_{1}}=\frac{1}{2}\boldsymbol{x}+\bigg(0,\frac{1}{4}\bigg),~~
f_{e_{2}}=\frac{1}{2}\boldsymbol{x}+\bigg(\frac{1}{4},\frac{1}{4}\bigg),~~
f_{e_{3}}=\frac{1}{2}\boldsymbol{x}+\bigg(\frac{1}{2},\frac{1}{4}\bigg),~~
f_{e_{4}}=\frac{1}{2}\boldsymbol{x}+\bigg(\frac{1}{4},-\frac{1}{2}\bigg),~~
$$
$$f_{e_{5}}=\frac{1}{2}\boldsymbol{x},~~
f_{e_{6}}=\frac{1}{2}\boldsymbol{x}+\bigg(\frac{1}{4},0\bigg),~~
f_{e_{7}}=\frac{1}{2}\boldsymbol{x}+\bigg(\frac{1}{2},0\bigg),~~
f_{e_{8}}=\frac{1}{2}\boldsymbol{x}+\bigg(\frac{1}{4},\frac{3}{4}\bigg).~~
$$
Let $\Omega_{1}$ and $\Omega_{2}$, viewed as $(0,1)\times(0,1/2)$ and $(0,1)\times(1/2,1)$, be the lower and upper pieces of the interior of $\mathbb{T}^{2}$, respectively. Obviously, for $i=1,2$, and any $e\in E$, there exists a diffeomorphism
$\varphi:O_{i}\longrightarrow \Omega_{i}$ such that $S_{e}$, defined as in \eqref{eq(5.3)}, is a contractive similitude.
Let $K$ and $K_{0}$ be the graph self-similar set of $G=(V,E)$ generated by $\{S_{e}\}_{e\in E}$ and $\{f_{e}\}_{e\in E}$, respectively. Then $\dim_{{\rm H}}(K)=\dim_{{\rm H}}(K_{0})=\log(2+\sqrt{2})/\log2=1.77155\dots$.
\end{exam}
\begin{figure}[htbp]
\begin{center}
\tikzset{every picture/.style={line width=0.75pt}} 

\begin{tikzpicture}[x=0.75pt,y=0.75pt,yscale=-1,xscale=1]

\draw    (170.44,123.61) .. controls (191.13,144.03) and (315.33,147.16) .. (351.66,123.43) ;
\draw [shift={(353.78,121.94)}, rotate = 142.77] [fill={rgb, 255:red, 0; green, 0; blue, 0 }  ][line width=0.08]  [draw opacity=0] (10.72,-5.15) -- (0,0) -- (10.72,5.15) -- (7.12,0) -- cycle    ;
\draw  [fill={rgb, 255:red, 0; green, 0; blue, 0 }  ,fill opacity=1 ] (164.83,118.04) .. controls (164.13,119.22) and (162.61,119.61) .. (161.44,118.91) .. controls (160.26,118.21) and (159.88,116.7) .. (160.57,115.52) .. controls (161.27,114.35) and (162.79,113.96) .. (163.96,114.65) .. controls (165.14,115.35) and (165.52,116.87) .. (164.83,118.04) -- cycle ;
\draw  [fill={rgb, 255:red, 0; green, 0; blue, 0 }  ,fill opacity=1 ] (364.66,118.71) .. controls (363.96,119.88) and (362.45,120.27) .. (361.27,119.58) .. controls (360.1,118.88) and (359.71,117.36) .. (360.41,116.19) .. controls (361.1,115.01) and (362.62,114.62) .. (363.79,115.32) .. controls (364.97,116.02) and (365.36,117.54) .. (364.66,118.71) -- cycle ;
\draw    (170.11,112.28) .. controls (196.73,89.81) and (320.37,97.44) .. (350.28,111.92) ;
\draw [shift={(352.78,113.28)}, rotate = 211.87] [fill={rgb, 255:red, 0; green, 0; blue, 0 }  ][line width=0.08]  [draw opacity=0] (10.72,-5.15) -- (0,0) -- (10.72,5.15) -- (7.12,0) -- cycle    ;
\draw    (158.78,109.28) .. controls (92.94,41.46) and (229.06,41.9) .. (167.05,107.49) ;
\draw [shift={(165.11,109.5)}, rotate = 314.63] [fill={rgb, 255:red, 0; green, 0; blue, 0 }  ][line width=0.08]  [draw opacity=0] (10.72,-5.15) -- (0,0) -- (10.72,5.15) -- (7.12,0) -- cycle    ;
\draw    (358.44,109.28) .. controls (292.61,41.46) and (428.08,41.9) .. (366.05,107.49) ;
\draw [shift={(364.11,109.5)}, rotate = 314.63] [fill={rgb, 255:red, 0; green, 0; blue, 0 }  ][line width=0.08]  [draw opacity=0] (10.72,-5.15) -- (0,0) -- (10.72,5.15) -- (7.12,0) -- cycle    ;
\draw    (370.27,114.04) .. controls (438.68,48.82) and (437.64,183.9) .. (372.62,121.28) ;
\draw [shift={(370.63,119.32)}, rotate = 45.15] [fill={rgb, 255:red, 0; green, 0; blue, 0 }  ][line width=0.08]  [draw opacity=0] (10.72,-5.15) -- (0,0) -- (10.72,5.15) -- (7.12,0) -- cycle    ;
\draw    (366.67,124.69) .. controls (432.19,192.81) and (297.1,192.37) .. (359.44,127.08) ;
\draw [shift={(361.39,125.08)}, rotate = 134.9] [fill={rgb, 255:red, 0; green, 0; blue, 0 }  ][line width=0.08]  [draw opacity=0] (10.72,-5.15) -- (0,0) -- (10.72,5.15) -- (7.12,0) -- cycle    ;
\draw    (154.9,120.5) .. controls (86.64,185.87) and (87.37,50.79) .. (152.53,113.26) ;
\draw [shift={(154.52,115.22)}, rotate = 225.02] [fill={rgb, 255:red, 0; green, 0; blue, 0 }  ][line width=0.08]  [draw opacity=0] (10.72,-5.15) -- (0,0) -- (10.72,5.15) -- (7.12,0) -- cycle    ;
\draw    (166.01,124.02) .. controls (231.52,192.15) and (96.44,191.7) .. (158.78,126.41) ;
\draw [shift={(160.73,124.42)}, rotate = 134.9] [fill={rgb, 255:red, 0; green, 0; blue, 0 }  ][line width=0.08]  [draw opacity=0] (10.72,-5.15) -- (0,0) -- (10.72,5.15) -- (7.12,0) -- cycle    ;

\draw (122,65.9) node [anchor=north west][inner sep=0.75pt]    {$e_{1}$};
\draw (82.5,112) node [anchor=north west][inner sep=0.75pt]    {$e_{2}$};
\draw (122,157) node [anchor=north west][inner sep=0.75pt]    {$e_{3}$};
\draw (248,82) node [anchor=north west][inner sep=0.75pt]    {$e_{4}$};
\draw (248,145) node [anchor=north west][inner sep=0.75pt]    {$e_{8}$};
\draw (390,65.9) node [anchor=north west][inner sep=0.75pt]    {$e_{5}$};
\draw (428.5,112) node [anchor=north west][inner sep=0.75pt]    {$e_{6}$};
\draw (390,157) node [anchor=north west][inner sep=0.75pt]    {$e_{7}$};
\draw (158,132) node [anchor=north west][inner sep=0.75pt]    {$\mathbf{1}$};
\draw (358,132) node [anchor=north west][inner sep=0.75pt]    {$\mathbf{2}$};

\end{tikzpicture}
\vspace{-1.1em}
\caption{The GIFS in Examples \ref{exam(5.4)}.} \label{fig.2}
\end{center}
\end{figure}
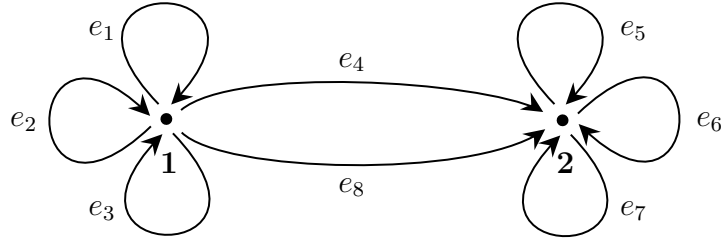
\begin{proof}
For convenience, we write $f_{e_{i}}=f_{i}$ for any $i\in\{1,\dots,8\}$. Let $\mathcal{F}_{k}=E_{k}$ for $k\geq1$, and let $\mathcal{T}_{1}$ and $\mathcal{T}_{2}$ be the neighborhood types of the root neighborhoods $[O_{1}]$ and $[O_{2}]$, respectively. Iterations of the root vertices are shown in Figure \ref{fig.3}(b, c). All neighborhood types are generated after three iterations.
To construct the weighted incidence matrix in the reduced graph $\mathbb{G}_{R}$, we note that
$$\mathbb{V}_{1}=\{(f_{1},1),\dots,(f_{8},1)\}.$$
Denote by $\omega_{1},\dots,\omega_{8}$ the vertices in $\mathbb{V}_{1}$ according to the above order. Then $[\omega_{4}]=\mathcal{T}_{2}$ and $[\omega_{8}]=\mathcal{T}_{1}$. Let $\mathcal{T}_{3}:=[\omega_{1}]$, $\mathcal{T}_{4}:=[\omega_{2}]$, $\mathcal{T}_{5}:=[\omega_{3}]$, $\mathcal{T}_{6}:=[\omega_{5}]$, $\mathcal{T}_{7}:=[\omega_{6}]$, $\mathcal{T}_{8}:=[\omega_{7}]$. Then
$$\mathcal{T}_{1}\longrightarrow\mathcal{T}_{2}+\mathcal{T}_{3}+
\mathcal{T}_{4}+\mathcal{T}_{5}$$
and
$$\mathcal{T}_{2}\longrightarrow\mathcal{T}_{1}+\mathcal{T}_{6}
+\mathcal{T}_{7}+\mathcal{T}_{8},$$
where we write $\mathcal{T}_{i}=\mathcal{T}_{i}(\frac{1}{2})$ for convenience. Since $f_{13}=f_{21}$, the edge $e_{1}e_{3}$ is removed in $\mathbb{G}_{R}$. Hence $\omega_{1}$ has three offspring, i.e.,
$$(f_{11},2),(f_{12},2),(f_{14},2)\in\mathbb{V}_{2},$$
which are of neighborhood types $\mathcal{T}_{3},\mathcal{T}_{4},\mathcal{T}_{2}$, respectively. Iterating $(f_{1},1)$ gives
$$\mathcal{T}_{3}\longrightarrow\mathcal{T}_{2}+\mathcal{T}_{3}+\mathcal{T}_{4}.$$
Since $f_{23}=f_{31}$, the edge $e_{2}e_{3}$ is removed in $\mathbb{G}_{R}$. Hence $\omega_{2}$ has three offspring, i.e.,
$$(f_{21},2),(f_{22},2),(f_{24},2)\in\mathbb{V}_{2}.$$
Note that $[(f_{22},2)]=\mathcal{T}_{4}$ and $[(f_{24},2)]=\mathcal{T}_{2}$. Let $\omega_{9}:=(f_{22},2)$ and $\mathcal{T}_{9}:=[\omega_{9}]$. Then
$$\mathcal{T}_{4}\longrightarrow\mathcal{T}_{2}+\mathcal{T}_{4}+\mathcal{T}_{9}.$$
Note that $\omega_{3}$ has four offspring, i.e.,
$$(f_{31},2),(f_{32},2),(f_{33},2),(f_{34},2)\in\mathbb{V}_{2},$$
with $[(f_{32},2)]=\mathcal{T}_{4}$, $[(f_{33},2)]=\mathcal{T}_{5}$ and $[(f_{34},2)]=\mathcal{T}_{2}$. Let $\omega_{10}:=(f_{31},2)$ and $\mathcal{T}_{10}:=[\omega_{10}]$. Then
$$\mathcal{T}_{5}\longrightarrow\mathcal{T}_{2}+\mathcal{T}_{4}+\mathcal{T}_{5}+\mathcal{T}_{10}.$$
Since $f_{213}=f_{221}$, the edge $e_{2}e_{1}e_{3}$ is removed in $\mathbb{G}_{R}$. Hence $\omega_{9}$ has three offspring, i.e.,
$$(f_{211},2),(f_{212},2),(f_{214},2)\in\mathbb{V}_{3},$$
which are of neighborhood types $\mathcal{T}_{10},\mathcal{T}_{4},\mathcal{T}_{2}$, respectively. Iterating $(f_{22},2)$ gives
$$\mathcal{T}_{9}\longrightarrow\mathcal{T}_{2}+\mathcal{T}_{4}+\mathcal{T}_{10}.$$
Since $f_{313}=f_{321}$, the edge $e_{3}e_{1}e_{3}$ is removed in $\mathbb{G}_{R}$. Hence $\omega_{10}$ has three offspring, i.e.,
$$(f_{311},2),(f_{312},2),(f_{314},2)\in\mathbb{V}_{3},$$
which are of neighborhood types $\mathcal{T}_{10},\mathcal{T}_{4},\mathcal{T}_{2}$, respectively. Iterating $(f_{31},2)$ gives
$$\mathcal{T}_{10}\longrightarrow\mathcal{T}_{2}+\mathcal{T}_{4}+\mathcal{T}_{10}.$$
Let $\mathcal{T}_{11}:=[(f_{65},2)]$ and $\mathcal{T}_{12}:=[(f_{75},2)]$. Using the same argument, it can be checked directly that
$$\mathcal{T}_{6}\longrightarrow\mathcal{T}_{1}+\mathcal{T}_{6}+\mathcal{T}_{7},~~
\mathcal{T}_{7}\longrightarrow\mathcal{T}_{1}+\mathcal{T}_{7}+\mathcal{T}_{11},$$
$$\mathcal{T}_{8}\longrightarrow\mathcal{T}_{1}+\mathcal{T}_{7}+\mathcal{T}_{8}+\mathcal{T}_{12},~~
\mathcal{T}_{11}\longrightarrow\mathcal{T}_{1}+\mathcal{T}_{7}+\mathcal{T}_{12},~~
\mathcal{T}_{12}\longrightarrow\mathcal{T}_{1}+\mathcal{T}_{7}+\mathcal{T}_{12}.$$
Hence the weighted incidence matrix is
$$
A_{\alpha}=\Big(\frac{1}{2}\Big)^{\alpha}\left({\begin{array}{cccccccccccc}
0&1&1&1&1&0&0&0&0&0&0&0\\
1&0&0&0&0&1&1&1&0&0&0&0\\
0&1&1&1&0&0&0&0&0&0&0&0\\
0&1&0&1&0&0&0&0&1&0&0&0\\
0&1&0&1&1&0&0&0&0&1&0&0\\
1&0&0&0&0&1&1&0&0&0&0&0\\
1&0&0&0&0&0&1&0&0&0&1&0\\
1&0&0&0&0&0&1&1&0&0&0&1\\
0&1&0&1&0&0&0&0&0&1&0&0\\
0&1&0&1&0&0&0&0&0&1&0&0\\
1&0&0&0&0&0&1&0&0&0&0&1\\
1&0&0&0&0&0&1&0&0&0&0&1
\end{array}}\right)=:\Big(\frac{1}{2}\Big)^{\alpha}\widetilde{A}_{\alpha},
$$
and the maximal eigenvalue of $\widetilde{A}_{\alpha}$ is $2+\sqrt{2}$.
The GIFS $G=(V,E)$ defined on $\mathbb{R}^{2}$ satisfies (GFTC) with $\{O_{i}\}_{i=1}^{2}$ being a GFTC-family and $\{f_{e}\}_{e\in E}$ being an associated family of contractive similitudes.
By Proposition \ref{prop(5.2)}, the GIFS $G=(V,E)$ defined on $\mathbb{T}^{2}$ satisfies (GFTC) with $\{\Omega_{i}\}_{i=1}^{2}$ being a GFTC-family and $\{S_{e}\}_{e\in E}$ being an associated family of contractive similitudes.
By Theorem \ref{thm(41.1)}, $\dim_{{\rm H}}(K)=\dim_{{\rm H}}(K_{0})=\log(2+\sqrt{2})/\log2=1.77155\dots$.
\end{proof}

\begin{figure}[htbp]
\centering
\subfigure[]
{\includegraphics[width=4.8cm]{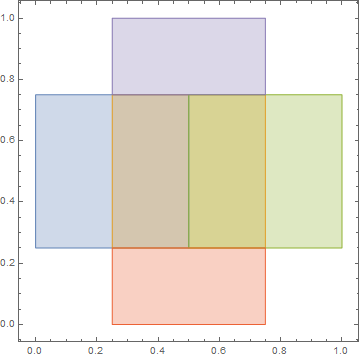}\label{fig.(a)}}
\qquad\qquad
\subfigure[]
{\includegraphics[width=4.8cm]{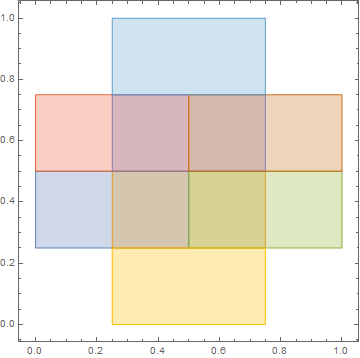}\label{fig.(b)}}
\\
\subfigure[]
{\includegraphics[width=4.8cm]{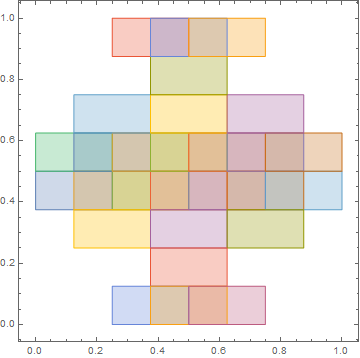}\label{fig.(c)}}
\qquad\qquad
\subfigure[]
{\includegraphics[width=4.8cm]{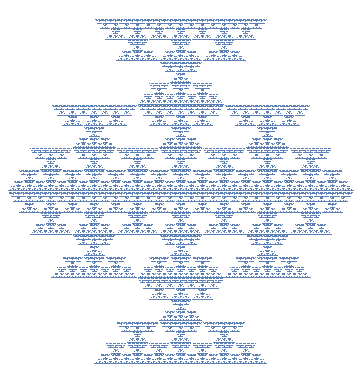}\label{fig.(d)}}

\caption{(a) $\{g_{i}\}_{i=1}^{4}$. (b) The first iteration of $G=(V,E)$ on $\mathbb{R}^{2}$ associated to $\{g_{i}\}_{i=1}^{4}$. (c) The second iteration. (d) The corresponding graph self-similar set.}\label{fig.3}
\end{figure}

\begin{rema}\label{rema(5.1)}
Unlike $\mathbb{T}^{2}$, the family of contractive similitudes associated to a GIFS defined on $\mathbb{R}^{2}$ can be characterized explicitly. In fact, Proposition \ref{prop(5.2)} is not needed in the proof of Example \ref{exam(5.4)}. We may use Theorem \ref{thm(41.1)} and a similar arguement as in the proof of Example \ref{exam(5.4)} to obtain the same result.
\end{rema}

\noindent\textbf{Acknowledgements} The authors are supported in part by the National Natural Science Foundation of China, grant 11771136, and Construct Program of the Key Discipline in Hunan Province. The first author is also supported in part by a Faculty Research Scholarly Pursuit Funding from Georgia Southern University.\\

\noindent\textbf{Declaration}\\

\noindent\textbf{Competing interests} The authors hereby declare that there is no conflict of interest regarding this work.

\bigskip

\end{document}